\def\ps@pprintTitle{%
 \let\@oddhead\@empty
 \let\@evenhead\@empty
 \def\@oddfoot{}%
 \let\@evenfoot\@oddfoot}
\newtheorem{proposition}{Proposition}
\newtheorem{lemma}{Lemma}
\newtheorem{corollary}{Corollary}
\newdefinition{definition}{Definition}
\newdefinition{remark}{Remark}
\newdefinition{example}{Example}
\DeclareFontFamily{U}{rsfs}{\skewchar\font127}
\DeclareFontShape{U}{rsfs}{m}{n}{<-6> rsfs5 <6-8> rsfs7 <8-> rsfs10}{}
\newcommand{\numberthis}{\addtocounter{equation}{1}\tag{\theequation}}
\DeclareMathOperator*{\esssup}{ess\,sup}
\DeclareMathOperator*{\plim}{\mathbb{P}-lim}
\begin{document}

\begin{frontmatter}

	\title{A Stochastic Calculus for Rosenblatt Processes}

	\author[KU,MFF]{Petr \v{C}oupek}
	\ead{coupek@karlin.mff.cuni.cz}

	\author[KU]{Tyrone E. Duncan\corref{cor}}
	\ead{duncan@ku.edu}

	\author[KU]{Bozenna Pasik-Duncan}
	\ead{bozenna@ku.edu}

	\cortext[cor]{Corresponding author}
	\address[KU]{University of Kansas, Department of Mathematics, 1460 Jayhawk Blvd., Lawrence, 660 45, Kansas, USA}
	\address[MFF]{Charles University, Faculty of Mathematics and Physics, Sokolovsk\'a 83, Prague 8, 186 75, Czech Republic}

	\begin{keyword}
		Rosenblatt process \sep stochastic calculus \sep It\^o formula \sep Skorokhod integral \sep forward integral
		\MSC[2010] 60H05 \sep 60H07 \sep 60G22
	\end{keyword}

	\begin{abstract}
		A stochastic calculus is given for processes described by stochastic integrals with respect to fractional Brownian motions and Rosenblatt processes somewhat analogous to the stochastic calculus for It\^o processes. These processes for this stochastic calculus arise naturally from a stochastic chain rule for functionals of Rosenblatt processes; and some It\^o-type expressions are given here. Furthermore, there is some analysis of these results for their applications to problems using Rosenblatt noise.
	\end{abstract}

\end{frontmatter}

\medskip

\begin{center}
\textit{This paper is dedicated to the memory of Larry Shepp.}
\end{center}

\section{Introduction}

Self-similar stochastic processes, that are processes whose distributions are invariant under suitable scalings, can be used as mathematical models of various physical phenomena. These processes have been used for modeling in hydrology, biophysics, geophysics, telecommunication, turbulence, cognition, and finance. Typically, these self-similar processes exhibit long-range dependence, that is, their autocorrelations decay slower than exponentially. Some bibliographical guides are given by Taqqu \cite{Taqqu86}; and Willinger, Taqqu, and Erramili \cite{WilTaqErr96}; that provide applications of self-similar stochastic processes and many references.

The family of fractional Brownian motions is among the most studied self-similar stochastic processes. Fractional Brownian motion indexed by the Hurst parameter $0<H<1$, that is denoted by $B^H$ here, is a centered Gaussian stochastic process whose covariance function is given by
	\begin{equation*}
		\mathbb{E} B^H_sB_t^H = \frac{1}{2}\left(|s|^{2H}+|t|^{2H}-|s-t|^{2H}\right), \quad s,t\in\mathbb{R}.
	\end{equation*}
There are at least two reasons why fractional Brownian motions are of interest. First, these processes are self-similar, have stationary increments, and exhibit long-range dependence for $\sfrac{1}{2}<H<1$. These properties make them very attractive for practical modeling and applications. The second reason is the fact that they are Gaussian processes which makes some mathematical models using fractional noise feasible for analysis. In fact, stochastic calculus for fractional Brownian motions is fairly developed, e.g. \cite{AlosMazNua01,AlosNua03,BiaHuOksZha08, DecrUstu99,DunJakDun06,DunHuDun00}.

However, non-Gaussian data with fractal features have also been observed empirically,  e.g. \cite{Dom15} where control error in single-input-single-output (SISO) loops is analyzed. Doma\'nski has shown from data of some physical systems that the Gaussian assumption is not always appropriate. In such cases, it does not seem reasonable to use a Gaussian process such as a fractional Brownian motion as a model for these physical phenomena and the use of a Rosenblatt process can provide a useful alternative.

A Rosenblatt process with the Hurst parameter $\sfrac{1}{2}<H<1$, denoted here as $R^H$, can arise as a non-Gaussian limit of suitably normalized sums of long-range dependent random variables in a non-central limit theorem, see e.g. \cite{DobMaj79, Ros61, Taqq79}. This process admits a version with H\"older continuous sample paths (up to order $H$), has stationary increments, and is $H$-self-similar with long-range dependence. Moreover, its covariance function is the same as that of the fractional Brownian motion $B^H$. However, unlike the family of fractional Brownian motions, the family of Rosenblatt processes is not Gaussian.

A detailed history, construction, and many properties of Rosenblatt processes are given in the survey article of Taqqu \cite{Taqqu11}. Some stochastic analysis of Rosenblatt pocesses is given by Tudor in \cite{Tud08} and some properties of Rosenblatt processes are given in \cite{AbrPip06,Alb98,Pip04}. Furthemore, stochastic (partial) differential equations with additive Rosenblatt noise have also been studied, e.g. \cite{BonTud11, Cou18, CouMas17, CouMasOnd18}.

However, despite the considerable attention that Rosenblatt processes have received, there is only a limited development of a stochastic calculus and especially It\^o-type formulas for these processes.

In the pioneering work of Tudor \cite{Tud08}, a representation of Rosenblatt processes on a finite time interval is given and used to construct both Wiener-type and stochastic integrals in which Rosenblatt processes appear as the integrators. Furthemore, an It\^o-type formula for functionals of a Rosenblatt process is given for some general conditions. However, these conditions seem to be difficult to verify in specific cases and in fact in \cite{Tud08}, the conditions are only verified for the square and cube of a Rosenblatt process.

In \cite{Arr15}, a stochastic calculus with respect to Rosenblatt processes is developed by means of white-noise theory \cite{HidaKuoPottStr94}. In this framework, an It\^o-type formula for functionals of a Rosenblatt process is proved. However, this formula is given as an infinite series that involves derivatives of all orders and white-noise integrals with respect to stochastic processes obtained from the Rosenblatt process.

An important contribution is made by Arras in \cite{Arr16} that improves the results in \cite{Tud08} and provides an It\^o-type formula for functionals of Rosenblatt processes by means of Malliavin calculus on the white-noise probability space which allows to use techniques from white-noise distribution theory. This It\^o-type formula is valid for infinitely differentiable functionals with at most polynomial growth.

In the approach used here, some methods of \cite{Arr16} are used without relying on the white-noise setting. Not only functionals of Rosenblatt processes but functionals of stochastic integrals with respect to them are considered. A main results of this paper is an It\^o-type formula for $\mathscr{C}^3$ functionals with at most polynomial growth of the stochastic processes with second-order fractional differential of the form
	\begin{equation}
	\label{eq:x_t_intro}
		x_t = x_0+ \int_0^t\vartheta_s\,\mathrm{d}{s} + 2c_{H}^{B,R}\int_0^t\varphi_s\delta B_s^{\frac{H}{2}+\frac{1}{2}} + \int_0^t\psi_s\delta R_s^H
	\end{equation}
that have H\"older continuous sample paths of an order greater than $\sfrac{1}{2}$. Here, $c_H^{B,R}$ is a suitable normalizing constant. The integrals are defined using (multiple) Skorokhod integrals with respect to a Wiener process and suitable (fractional) transfer operators similar to \cite{Tud08}. This formula generalizes the results of \cite{Tud08} and \cite{Arr16}. There are two noteworthy properties of  the obtained It\^o-type formula:
	\begin{itemize}
	\itemsep0em
		\item The formula shows that the form \eqref{eq:x_t_intro} of the process $x$ is preserved under compositions with $\mathscr{C}^3$ functions.
		\item There is a term that involves the third derivative.
	\end{itemize}
Both of these properties result from the second-order nature of Rosenblatt processes; that is, from the fact that Rosenblatt processes are defined as second-order Wiener-It\^o integrals.

As suggested in \cite{Arr16}, it seems plausible that the method used to obtain the general It\^o-type formula can be employed to obtain analogous formulas for Hermite processes of any order $k$, see \cite[Definition 3.1]{Tud13}, and it is conjectured to expect the appearance of stochastic integrals with respect to related Hermite processes up to order $k$ (such as $B^{\frac{H}{2}+\frac{1}{2}}$ is related to $R^H$) as well as derivatives up to order $k+1$ in such formulas. Further discussion of this phenomenon can be found on \cite[p. 548]{Arr15} and in \cite[Remark 8]{Tud08}.

The method used to obtain the It\^o-type formula has already been used in the literature, see e.g. \cite{BiaOks08} for the case of fractional Brownian motions and \cite{Tud08, Arr16} for the case of Rosenblatt processes. It can be briefly outlined as follows:
	\begin{enumerate}[label=\emph{Step \arabic*.}, leftmargin=*]
		\item\label{step:intro_1} Initially, two types of integrals with respect to fractional Brownian motions and Rosenblatt processes are defined: a pathwise forward integral defined by regularization of the integrator, see \cite{RusVal93}, and a Skorokhod-type integral defined by means of Malliavin calculus. These definitions are given in \autoref{def:forward_integral}, and in \autoref{def:Skor_int_B} and \autoref{def:Skor_int_R}, respectively. Moreover, the relationship between these two integrals is established. The forward integral with respect to the fractional Brownian motion equals the Skorokhod-type integral plus an additional correction term, see \autoref{prop:relationship_for_Skor_B}. In the case of the Rosenblatt process with Hurst parameter $\sfrac{1}{2}<H<1$, there appear two correction terms and  one of these terms is identified as a Skorokhod-type integral with respect to the fractional Brownian motion of Hurst parameter $\sfrac{H}{2}+\sfrac{1}{2}$, see \autoref{prop:relationship_for_Skor_R}.
			\item The It\^o formula itself is then proved as follows: Starting with a process that has the Skorokhod-type differential \eqref{eq:x_t_intro}, write the integrals as forward integrals by adding the appropriate correction terms using the relationship from \ref{step:intro_1} The important feature of forward integrals is that, unlike the Skorokhod-type integrals, they commute with random variables. Use this property  in the approximation procedure that leads to the It\^o formula and that is based on Taylor's formula. The resulting forward integrals are then rewritten  as Skorokhod-type integrals using results from  \ref{step:intro_1} Finally, convergence of the approximation is verified.
	\end{enumerate}

This paper is organized as follows. In \autoref{sec:prelim}, the general setting is introduced and several notions from Malliavin calculus (Malliavin derivative, Sobolev-Watanabe spaces, and the Skorokhod integral) are reviewed. Furthermore, the definitions of fractional Brownian motions and Rosenblatt processes are also given here.

In \autoref{sec:stoch_int}, some methods that are used throughout the paper but which may also be of independent interest are collected. Initially, the forward integral is recalled which is followed by the definitions of the Skorokhod-type integrals with respect to fractional Brownian motions and Rosenblatt processes. Subsequently, some mapping properties of the Skorokhod-type integrals together with their relationship with the corresponding forward integrals are described. This section is concluded with several technical lemmas useful for computations (product and chain rules for fractional stochastic derivatives, the duality relationship between fractional stochastic derivatives and Skorokhod-type integrals, and a Fubini-type result).

The main results are collected in \autoref{sec:Ito_formulas}. A general It\^o-type formula for functionals of stochastic processes with second-order fractional differential is given in \autoref{prop:Ito_formula}. The assumptions for which the formula is satisfied are kept broad not to limit the possible applications. However, some cases where   the assumptions can be simplified in special cases are described. More precisely, in \autoref{prop:Ito_for_integral_only} and its corollaries, an It\^o formula for functionals of the stochastic integral with respect to a Rosenblatt process which is valid under assumptions formulated in terms of the integrand rather then in terms of the integral is given. This section also contains four technical lemmas which can be useful for computations and three short examples which show how the formulas can be used.

The paper is concluded with \autoref{sec:corollaries} where the second moment of stochastic integral with respect to a Rosenblatt process is computed and where an estimate for its higher absolute moments is given.

\section{Preliminaries}
\label{sec:prelim}

Some notation is briefly described now. The expression $A\lesssim B$ means that that there exists a finite positive constant $c$ such that $A\leq c B$. The constant is independent of the variables which appear in the expressions $A$ and $B$ and it can change from line to line. Similarly, $A\eqsim B$ means that there exists a finite positive constant $c$ such that $A=c B$. These symbols are  used when the exact value of a constant is not important to simplify the exposition. The symbol $\otimes$ denotes the tensor product of two Hilbert spaces or their elements.

\subsection{Some notions from Malliavin calculus}

Let $W=(W_t,t\in\mathbb{R})$ be the two-sided Wiener process defined on a complete probability space $(\Omega,\mathscr{F},\mathbb{P})$ and assume that the $\sigma$-algebra $\mathscr{F}$ is generated by $W$. Several notions of Malliavin calculus that are needed in this paper are now recalled, see e.g. \cite{NouPec12, Nua06} for a complete exposition of the topic.

Let $I(\xi)$ be the first-order Wiener-It\^o integral and let $\mathcal{P}$ be the algebra of polynomial random variables generated by $\{I(\xi), \xi\in L^2(\mathbb{R})\}$, i.e. $\mathcal{P}$ is the set of random variables of the form
		\begin{equation}
		\label{eq:X_polynomial}
		P = p(I(\xi_1), I(\xi_2), \ldots, I(\xi_n))
	\end{equation}
where $p$ is a polynomial, $n\in\mathbb{N}$, and $\xi_i\in L^2(\mathbb{R})$ for every $i=1,2,\ldots, n$. For $k\in\mathbb{N}$ and $P\in\mathcal{P}$ of the form \eqref{eq:X_polynomial}, define the \textit{$k$-th Malliavin derivative of $P$ at point} $x=(x_1,x_2, \ldots,x_k)\in\mathbb{R}^k$ by
	\begin{equation*}
		D^k_xP\overset{\textnormal{Def.}}{=} \sum_{i_1=1}^n\sum_{i_2=1}^n\cdots \sum_{i_k=1}^n\partial_{i_1}\partial_{i_2}\,\cdots\,\partial_{i_k} p(I(\xi_1), I(\xi_2), \ldots ,I(\xi_n))\xi_{i_1}(x_1)\xi_{i_2}(x_2)\cdot\ldots\cdot \xi_{i_k}(x_k).
	\end{equation*}
It follows that for every $k\in\mathbb{N}$ and every $p\geq 1$, the operator $D^k$ is closable from $\mathcal{P}\subset L^p(\Omega)$ to $L^p(\Omega;L^2(\mathbb{R}^k))$, see e.g. \cite[Proposition 2.3.4]{NouPec12}, and its domain is the \textit{Sobolev-Watanabe space} $\mathbb{D}^{k,p}$ defined as the closure of $\mathcal{P}$ with respect to the norm
	\begin{equation*}
		\|P\|_{\mathbb{D}^{k,p}}\overset{\textnormal{Def.}}{=} \left(\mathbb{E}|P|^p + \mathbb{E}\|D P\|_{L^2(\mathbb{R})}^p + \cdots + \mathbb{E}\|D^kP\|_{L^2(\mathbb{R}^k)}^p\right)^\frac{1}{p}.
	\end{equation*}
The operator $D^k$ admits an adjoint denoted by $\delta^k$. Define the set $\mathrm{Dom}(\delta^k)$ as the set of $u\in L^2(\Omega;L^2(\mathbb{R}^k))$ for which there exists a constant $c>0$ such that
	\begin{equation*}
		\left|\left\langle D^k P;u\right\rangle_{L^2(\Omega;L^2(\mathbb{R}^k))}\right|\leq c\|P\|_{L^2(\Omega)}
	\end{equation*}
is satisfied for every $P\in\mathcal{P}$. For $u\in\mathrm{Dom}(\delta^k)$, the symbol $\delta^k(u)$ denotes the unique element of $L^2(\Omega)$ such that
	\begin{equation}
	\label{eq:duality_formula}
		\langle P,\delta^k(u)\rangle_{L^2(\Omega)} = \langle u;D^kP\rangle_{L^2(\Omega;L^2(\mathbb{R}^k))}
	\end{equation}
holds for every $P\in\mathcal{P}$, see e.g. \cite[Definition 2.5.1 and Definition 2.5.2]{NouPec12}. The existence of $\delta^k(u)$ is ensured  by the Riesz representation theorem. The operator $\delta^k:\mathrm{Dom}(\delta^k)\rightarrow L^2(\Omega)$ is called the \textit{(multiple) Skorokhod integral (with respect to $W$)}.

\subsection{Fractional Brownian motions and Rosenblatt processes}

The definitions of a fractional Brownian motion and a Rosenblatt process are now recalled. Denote by $(u)_+ = \max\{u,0\}$ the positive part of $u$.

\begin{definition}
\label{def:fBm}
Let $H\in(\sfrac{1}{2},1)$. The \textit{fractional Brownian motion $B^H=(B^H_t)_{t\in\mathbb{R}}$ of the Hurst parameter $H$} is defined by
	\begin{equation*}
		B_t^H\overset{\textnormal{Def.}}{=} C_H^B\int_{\mathbb{R}}\left(\int_0^t (u-y)_+^{H-\frac{3}{2}}\,\mathrm{d}{u}\right)\,\mathrm{d}{W}_y, \quad t\in\mathbb{R},
	\end{equation*}
where $C_H^B$ is a normalizing constant such that $\mathbb{E}(B_1^H)^2=1$ and the stochastic integral is the Wiener-It\^o integral of order one.
\end{definition}

\begin{definition}
\label{def:Rosenblatt_process}
Let $H\in(\sfrac{1}{2},1)$. The \textit{Rosenblatt process $R^H=(R^H_t)_{t\in\mathbb{R}}$ of the Hurst parameter $H$} is defined by
	\begin{equation*}
		R_t^H\overset{\textnormal{Def.}}{=} C^R_H\int_{\mathbb{R}^2}\left(\int_0^t(u-y_1)_+^{\frac{H}{2}-1}(u-y_2)_+^{\frac{H}{2}-1}\,\mathrm{d}{u}\right)\,\mathrm{d}{W}_{y_1}\,\mathrm{d}{W}_{y_2}, \quad t\in\mathbb{R},
	\end{equation*}
where $C^R_H$ is a normalizing constant such that $\mathbb{E}(R_1^H)^2=1$ and the double stochastic integral is the Wiener-It\^o multiple integral of order two.
\end{definition}

\begin{remark}
\label{rem:constants}
The normalizing constants $C_H^B$ and $C_H^R$ are given by
	\begin{equation*}
			C_H^B = \sqrt{\frac{H(2H-1)}{\mathrm{B}\left(2-2H,H-\frac{1}{2}\right)}},\quad\quad C_H^R = \frac{\sqrt{2H(2H-1)}}{2\mathrm{B}\left(1-H,\frac{H}{2}\right)}
	\end{equation*}
where $\mathrm{B}$ is the Beta function. It will be also convenient to denote
	\begin{equation*}
		c_H^B \overset{\textnormal{Def.}}{=} C_H^B\,\Gamma\left(H-\frac{1}{2}\right), \quad\quad c_H^R \overset{\textnormal{Def.}}{=} C_H^R\,\Gamma\left(\frac{H}{2}\right)^2,
	\end{equation*}
and
	\begin{equation*}
		c_H^{B,R}\overset{\textnormal{Def.}}{=}\frac{c_H^R}{c_{\frac{H}{2}+\frac{1}{2}}^B} = \sqrt{\frac{(2H-1)}{(H+1)}\frac{\Gamma\left(1-\frac{H}{2}\right)\Gamma\left(\frac{H}{2}\right)}{\Gamma(1-H)}}
	\end{equation*}
where $\Gamma$ is the Gamma function.
\end{remark}

\section{Stochastic integration}
\label{sec:stoch_int}
In this section, stochastic integration with respect to fractional Brownian motions and Rosenblatt processes is reviewed and formulas relating the respective forward and Skorokhod integrals are given.

\subsection{Forward-type integrals}
One possible approach to stochastic integration with respect to these processes is via regularization of the integrand in the sense of Russo and Vallois \cite{RusVal93}. This is a natural approach due to the fact that both the processes $B^H$ and $R^H$ have versions with H\"older continuous sample paths of every order smaller than $H$ (recall that $H$ is assumed to be larger than $\sfrac{1}{2}$ in this paper). The definition of the forward integral follows.

\begin{definition}
\label{def:forward_integral}
Let $M\subseteq\mathbb{R}$ be an interval. An integrable stochastic process $g=(g_s)_{s\in M}$ is said to be \textit{forward integrable} on $M$ with respect to a continuous process $h=(h_s)_{s\in\mathbb{R}}$ if the limit in probability
	\begin{equation*}
		\plim_{\varepsilon\downarrow 0}\int_M g_s\frac{h_{s+\varepsilon}-h_s}{\varepsilon}\,\mathrm{d}{s}
	\end{equation*}
exists. The limit is called the \textit{forward integral of $g$ with respect to $h$ on $M$} and it is denoted by $\int_Mg_s\,\mathrm{d}^{-}h_s$.
\end{definition}

\subsection{Skorokhod-type integrals}
On the other hand, stochastic calculus with respect to fractional Brownian motions and Rosenblatt processes can be approached from the perspective of Malliavin calculus. For fractional Brownian motions, such stochastic calculus is now well-developed, e.g. \cite{AlosMazNua01, AlosNua03, DecrUstu99, DunJakDun06}. Here, the definition of the Skorokhod-type integral is recalled and some properties that are useful for the subsequent analysis are reviewed. Note, however, that in the above references, the finite time interval representation of fractional Brownian motions from \cite[Corollary 3.1]{DecrUstu99} is used to define the stochastic integral whereas here, its infinite time interval representation from \autoref{def:fBm} is used as in \cite{Arr15, Arr16, CheNua05,Cou18}.

Consider the first-order fractional integrals defined for $f\in L^p(\mathbb{R})$ by
	\begin{equation*}
		I_+^\alpha (f)(x) = \frac{1}{\Gamma(\alpha)}\int_{-\infty}^xf(u)(x-u)^{\alpha-1}\,\mathrm{d}{u}
	\end{equation*}
and
	\begin{equation*}
		I_{-}^\alpha(f)(x) = \frac{1}{\Gamma(\alpha)}\int_{x}^\infty f(u)(u-x)^{\alpha-1}\,\mathrm{d}{u}
	\end{equation*}
provided that $\alpha\in (0,1)$ and $1\leq p<\sfrac{1}{\alpha}$, cf. \cite[formulas (5.2) and (5.3)]{SamKilMar93}. If the function $f$ takes values in a Banach space, then the integrals above are interpreted as Bochner integrals. The Skorokhod integral with respect to the fractional Brownian motion $B^H$ is defined via the transfer operator $I_{-}^{H-\frac{1}{2}}$ as follows.

\begin{definition}
\label{def:Skor_int_B}
Let $H\in (\sfrac{1}{2},1)$ and let $M\subseteq\mathbb{R}$ be an interval. Define
	\begin{equation*}
		\varLambda_{B^H}(M) \overset{\textnormal{Def.}}{=} \left\{g:\mathbb{R}\rightarrow L^2(\Omega)\mbox{ such that } I_{-}^{H-\frac{1}{2}}(\textbf{1}_Mg)\in \mathrm{Dom}(\delta)\right\}.
	\end{equation*}
A stochastic process $g:\mathbb{R}\rightarrow L^2(\Omega)$ is said to be \textit{Skorokhod integrable with respect to the fractional Brownian motion $B^H$ on $M$} if $g\in\varLambda_{B^H}(M)$. For such integrands, the \textit{Skorokhod integral} is defined by
	\begin{equation*}
		\int_Mg_s\delta B_s^H \overset{\textnormal{Def.}}{=} c_H^B \left(\delta\circ I_-^{H-\frac{1}{2}}\right)\left(\textbf{1}_Mg\right).
	\end{equation*}
\end{definition}

The following proposition provides a mapping property of the Skorokhod integral with respect to the fractional Brownian motion with the Hurst parameter $H\in (\sfrac{1}{2},1)$. In particular, it ensures that stochastic processes from the space $L^\frac{1}{H}(M;\mathbb{D}^{1,2})$ are Skorokhod integrable with respect to the fractional Brownian motion and the stochastic integral is a square-integrable random variable.

\begin{proposition}
\label{prop:boundedness_of_int_B}
Let $H\in(\sfrac{1}{2},1)$ and $M\subseteq\mathbb{R}$ be an interval. The linear operator $\int_M(\cdots)\delta B^H$ is bounded from $L^\frac{1}{H}(M;\mathbb{D}^{k,p})$ to $\mathbb{D}^{k-1,p}$ for every integer $k\geq 1$ and every $p$ such that $1\leq pH<\infty$.
\end{proposition}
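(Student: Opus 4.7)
The plan is to unfold the definition of the Skorokhod integral and chain two well-known estimates: the Hardy--Littlewood--Sobolev (HLS) inequality for the transfer operator $I_-^{H-\frac12}$ and Meyer's inequality for the divergence $\delta$. By \autoref{def:Skor_int_B},
\[
\int_M g\,\delta B^H = c_H^B\,\delta\bigl(I_-^{H-\frac12}(\mathbf{1}_M g)\bigr),
\]
so it is enough to show that the composition $\delta\circ I_-^{H-\frac12}$ is bounded from $L^{1/H}(M;\mathbb{D}^{k,p})$ into $\mathbb{D}^{k-1,p}$. Observe that the constraint $1\le pH$ together with $H<1$ forces $p>1$, which is exactly the regime in which Meyer's inequality applies.

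By Meyer's inequality (see, e.g., \cite[Proposition 1.5.7]{Nua06}), $\delta:\mathbb{D}^{k,p}(L^2(\mathbb{R}))\to\mathbb{D}^{k-1,p}$ is bounded for every $p>1$ and every integer $k\ge 1$. Hence the task reduces to proving that $I_-^{H-\frac12}$ maps $L^{1/H}(M;\mathbb{D}^{k,p})$ continuously into $\mathbb{D}^{k,p}(L^2(\mathbb{R}))$. The key analytical input is the scalar HLS bound: for $\alpha\in(0,1)$, $1<r<1/\alpha$, and $1/s=1/r-\alpha$, the operator $I_-^\alpha$ is bounded from $L^r(\mathbb{R})$ to $L^s(\mathbb{R})$. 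Taking $\alpha=H-\tfrac12$ and $r=1/H$ yields $s=2$, and the admissibility conditions are met since $H\in(\tfrac12,1)$. The pointwise majorisation $\|I_-^\alpha f(x)\|_X\le I_-^\alpha(\|f(\cdot)\|_X)(x)$ immediately promotes this to a vector-valued statement: $I_-^{H-\frac12}:L^{1/H}(\mathbb{R};X)\to L^2(\mathbb{R};X)$ is bounded for any Banach space $X$.

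To convert this into the required $\mathbb{D}^{k,p}(L^2(\mathbb{R}))$ bound, I would exploit the fact that $I_-^{H-\frac12}$ is realised as a Bochner integral in the time parameter while each $D^j$ acts only on the Wiener variables; consequently the two operators commute on smooth test processes and, by closability of $D^j$, on the natural domains, giving $D^j\bigl(I_-^{H-\frac12}(\mathbf{1}_M g)\bigr)=I_-^{H-\frac12}(\mathbf{1}_M D^j g)$ with $I_-^{H-\frac12}$ acting in the time variable only. Applying vector-valued HLS with $X=L^p(\Omega;L^2(\mathbb{R}^j))$ for $j=0,1,\dots,k$, and then swapping the $L^p(\Omega)$ and $L^2(dt)$ integrations via Minkowski's integral inequality, produces the desired estimate. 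Composing with Meyer's inequality then completes the proof.

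The main obstacle is the range $1<p<2$, which is non-empty under the hypotheses because $p\ge 1/H$ but $1/H<2$; in this regime Minkowski's integral inequality goes in the unfavourable direction, so the clean swap described above is not directly available. Two natural remedies suggest themselves: interpolating between $p=2$ (where the argument is transparent) and a large exponent, or appealing to the sharper fractional refinement of Meyer's inequality formulated in terms of $L^{1/H}$-type mixed norms which has been developed in the fractional Brownian setting, as in \cite{AlosMazNua01,AlosNua03}.
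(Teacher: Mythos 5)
Your overall architecture is the same as the paper's: unfold the definition, apply Meyer's inequality to reduce to a bound on $I_-^{H-\frac12}(\mathbf{1}_Mg)$ in $\mathbb{D}^{k,p}(L^2(\mathbb{R}))$, commute $D^j$ with the transfer operator, and invoke the $L^{1/H}\to L^2$ boundedness of $I_-^{H-\frac12}$ from \cite[Theorem 5.3]{SamKilMar93} together with Minkowski's integral inequality. The gap is in the execution of the last step, and you have correctly located it yourself: by taking the Banach space $X=L^p(\Omega;L^2(\mathbb{R}^j))$ in the vector-valued HLS bound you end up controlling the mixed norm $L^2_t(L^p_\omega)$, whereas the target is $L^p_\omega(L^2_t)$, and the swap requires $p\geq 2$. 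Since the hypothesis only gives $p\geq \sfrac{1}{H}$ and $\sfrac{1}{H}<2$, the range $1<p<2$ is non-empty and is genuinely within the scope of the statement (it is used later with $p=\sfrac{1}{H}$-type exponents), so the argument as written does not prove the proposition. Neither proposed remedy closes this: interpolating between $p=2$ and a large exponent only covers $p\geq 2$, and the appeal to the $L^{1/H}$-type estimates of \cite{AlosMazNua01,AlosNua03} is a pointer to a different theory rather than a proof.

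The fix is a reordering of the same ingredients, and it is what the paper actually does (its proof defers to that of \autoref{prop:boundedness_of_int_R}, with \cite[Theorem 5.3]{SamKilMar93} playing the role of \autoref{lem:I_tr_bounded}). Apply your pointwise majorisation with $X=L^2(\mathbb{R}^j)$ only, i.e.\ pathwise in $\omega$: for each fixed $\omega$ and each fixed Malliavin variable $x$, the scalar HLS bound gives $\|I_-^{H-\frac12}(\mathbf{1}_MD^j_xg)(\omega,\cdot)\|_{L^2(\mathrm{d}t)}\lesssim\|D^j_xg(\omega,\cdot)\|_{L^{1/H}(M)}$, so the probability space never enters the HLS step. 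After this, the time variable appears only through the $L^1(M,\mathrm{d}s)$ integral of $|D^j_xg_s|^{1/H}$, and the two remaining swaps are Minkowski's integral inequality with exponents $2H\geq 1$ (to move the $L^2(\mathrm{d}x)$ norm inside the $\mathrm{d}s$ integral) and $pH\geq 1$ (to move the $L^p(\Omega)$ norm inside), both of which go in the favourable direction precisely under the stated hypotheses $H>\sfrac{1}{2}$ and $pH\geq 1$. This yields the estimate displayed in the paper's proof and explains why the natural condition is $pH\geq 1$ rather than $p\geq 2$.
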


\begin{proof}
The proof is similar to the proof of the following  result in \autoref{prop:boundedness_of_int_R}. The main difference between the two is that \cite[Theorem 5.3]{SamKilMar93} is used instead of \autoref{lem:I_tr_bounded} to obtain the estimate
	\begin{equation*}
		\mathbb{E}\left\|I_{-}^{H-\frac{1}{2}}(\bm{1}_MD^lg)\right\|_{L^2(\mathbb{R})\otimes L^2(\mathbb{R}^l)}^p\lesssim \left(\int_M\left(\mathbb{E}\left\|D^lg_s\right\|_{L^2(\mathbb{R}^l)}\right)^\frac{1}{pH}\,\mathrm{d}{s}\right)^{pH}.
	\end{equation*}
\end{proof}

\begin{remark}
For non-fractional Sobolev-Watanabe spaces, \autoref{prop:boundedness_of_int_B} improves \cite[Proposition 16]{Arr16}. There it is shown that the integral $\int_M(\cdots)\delta B^H$ is bounded from $L^2(M;\mathbb{D}^{s,2})$ to $\mathbb{D}^{s-1,2}$ for every real $s\geq 1$ in the white-noise setting, that is, when the probability space is the space of tempered distributions, e.g. \cite{HidaKuoPottStr94}. \autoref{prop:boundedness_of_int_B} is used in the above form for the proofs of the It\^o-type formulas in \autoref{sec:Ito_formulas}.
\end{remark}

The relationship between the forward and the Skorokhod integral with respect to a fractional Brownian motion is given in the next proposition. For the comparison of these integrals, the first-order fractional stochastic derivative $\nabla^{\alpha}$ is defined, following \cite{Arr16}, by
	\begin{equation*}
		\nabla^\alpha\overset{\textnormal{Def.}}{=} I_{+}^\alpha\circ D.
	\end{equation*}
It can be shown that if $\alpha\in (0,\sfrac{1}{2})$, then the operator $\nabla^\alpha$ extends to a bounded linear operator from the Sobolev-Watanabe space $\mathbb{D}^{k,p}$ to the space $L^\frac{2}{1-2\alpha}(\mathbb{R};\mathbb{D}^{k-1,2})$ for $k\geq 1$ and $p\geq 2$ (and consequently to $L^\frac{2}{1-2\alpha}(M;\mathbb{D}^{k-1,2})$ for a bounded interval $M\subset\mathbb{R}$), cf. \cite[Proposition 14]{Arr16}.

\begin{example}
\label{ex:nabla_W}
As an example, an explicit expression for $\nabla^\frac{H}{2}W$ is given where $W$ is the underlying Wiener process and $H\in(\sfrac{1}{2},1)$ is a fixed constant. For every $s\geq 0$, $W_s=I(\bm{1}_{[0,s]})$ and thus
	\begin{align*}
		\nabla^\frac{H}{2}W_s(u) & = \frac{1}{\Gamma\left(\frac{H}{2}\right)}\int_{-\infty}^u (u-x)^{\frac{H}{2}-1}D_x W_s\,\mathrm{d}{x} \\
		& = \frac{1}{\Gamma\left(\frac{H}{2}\right)}\int_{-\infty}^u (u-x)^{\frac{H}{2}-1}\bm{1}_{[0,s]}(x)\,\mathrm{d}{x} \\
		& = \frac{2}{H\Gamma\left(\frac{H}{2}\right)}\left[(u-s)_+^{\frac{H}{2}}-u_+^\frac{H}{2}\right]
	\end{align*}
is satisfied for $s\geq 0$ and $u\in\mathbb{R}$.
\end{example}

The relationship between the forward and the Skorokhod integral with respect to a fractional Brownian motion is given now.

\begin{proposition}
\label{prop:relationship_for_Skor_B}
Let $H\in (\sfrac{1}{2},1)$ and let $g=(g_s)_{s\in [0,T]}$ be a stochastic process such that the following conditions are satisfied:
	\begin{enumerate}
	\itemsep0em
		\item The following integrability is satisfied
			\begin{equation*}
				\int_0^T\|g_s\|_{\mathbb{D}^{1,2}}^\frac{1}{H}\,\mathrm{d}{s} <\infty.
			\end{equation*}
		\item\label{ass:(2)dB} \begin{enumerate}
				 \itemsep0em
						\item For almost every $s\in [0,T]$, the following limit is satisfied
							\begin{equation*}
								\lim_{\varepsilon\downarrow 0}\esssup_{u\in (s,s+\varepsilon)}\|\nabla^{H-\frac{1}{2}}g_s(u)-\nabla^{H-\frac{1}{2}}g_s(s)\|_{L^2(\Omega)}=0.
							\end{equation*}
						\item There exists a non-negative function $p\in L^1(0,T)$ such that, for almost every $s\in [0,T]$, the inequality
							\begin{equation*}
								\|(\nabla^{H-\frac{1}{2}}g_s)(u)\|_{L^2(\Omega)}\leq p(s)
							\end{equation*}
						is satisfied for almost every $u\in [s,T]$.
				 \end{enumerate}
	\end{enumerate}
Then $g$ is forward integrable with respect to $B^H$ on $[0,T]$ and the following equality is satisfied in $L^2(\Omega)$:
	\begin{equation}
	\label{eq:relationship_for_Skor_B}
		\int_0^Tg_s\,\mathrm{d}^{-}B_s^H = \int_0^Tg_s\delta B_s^H + c_H^B\int_0^T(\nabla^{H-\frac{1}{2}}g_s)(s)\,\mathrm{d}{s}.
	\end{equation}
\end{proposition}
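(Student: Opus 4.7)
The plan is to start from the defining limit of the forward integral in \autoref{def:forward_integral}, rewrite each increment $B^H_{s+\varepsilon}-B^H_s$ as a Skorokhod integral with respect to the underlying Wiener process, apply the product rule for $\delta$ to commute the random scalar $g_s$ through, and then pass to the limit $\varepsilon\downarrow 0$ on each of the resulting pieces. Concretely, from \autoref{def:fBm} and \autoref{def:Skor_int_B} the starting identity is
$$B^H_{s+\varepsilon}-B^H_s = c_H^B\,\delta\!\left(I_-^{H-\frac{1}{2}}(\mathbf{1}_{[s,s+\varepsilon]})\right),$$
and the Skorokhod product formula (e.g.\ \cite[\S 1.3]{Nua06}), applied with the random factor $g_s\in\mathbb{D}^{1,2}$ and the deterministic $L^2(\mathbb{R})$-element $I_-^{H-\frac{1}{2}}(\mathbf{1}_{[s,s+\varepsilon]})$, gives
$$g_s(B^H_{s+\varepsilon}-B^H_s) = c_H^B\,\delta\!\left(g_s\,I_-^{H-\frac{1}{2}}(\mathbf{1}_{[s,s+\varepsilon]})\right) + c_H^B\,\bigl\langle I_-^{H-\frac{1}{2}}(\mathbf{1}_{[s,s+\varepsilon]}),\,Dg_s\bigr\rangle_{L^2(\mathbb{R})}.$$
Dividing by $\varepsilon$ and integrating $s$ over $[0,T]$ splits the defining Riemann sum into a ``Skorokhod piece'' and a ``correction piece''.

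For the correction piece I would use fractional integration by parts (the $L^2$-duality between $I_-^{H-\frac{1}{2}}$ and $I_+^{H-\frac{1}{2}}$, cf.\ \cite{SamKilMar93}) to rewrite
$$\bigl\langle I_-^{H-\frac{1}{2}}(\mathbf{1}_{[s,s+\varepsilon]}),\,Dg_s\bigr\rangle_{L^2(\mathbb{R})} = \int_s^{s+\varepsilon}(\nabla^{H-\frac{1}{2}}g_s)(u)\,\mathrm{d}u,$$
so that this piece equals $c_H^B\,\varepsilon^{-1}\int_0^T\!\!\int_s^{s+\varepsilon}(\nabla^{H-\frac{1}{2}}g_s)(u)\,\mathrm{d}u\,\mathrm{d}s$. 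Assumption (2)(a) produces $L^2(\Omega)$-convergence of the inner $\varepsilon$-average to $(\nabla^{H-\frac{1}{2}}g_s)(s)$ for a.e.\ $s$, and the envelope $p\in L^1(0,T)$ from (2)(b) furnishes the dominated-convergence hypothesis in the outer $s$-variable, so the correction piece converges in $L^2(\Omega)$ to the second term on the right-hand side of \eqref{eq:relationship_for_Skor_B}.

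For the Skorokhod piece I would apply the stochastic Fubini result announced in \autoref{sec:stoch_int} to pull $\delta$ outside of $\int_0^T\cdots\mathrm{d}s$, so the piece equals $c_H^B\,\delta(u_\varepsilon)$ with $u_\varepsilon := \varepsilon^{-1}\int_0^T g_s\,I_-^{H-\frac{1}{2}}(\mathbf{1}_{[s,s+\varepsilon]})\,\mathrm{d}s$. Swapping the two Lebesgue integrals identifies $u_\varepsilon$ as $I_-^{H-\frac{1}{2}}$ applied to the moving average $u\mapsto \varepsilon^{-1}\int_{(u-\varepsilon)\vee 0}^{u\wedge T}g_s\,\mathrm{d}s$, which converges a.e.\ to $\mathbf{1}_{[0,T]}(u)g_u$ by Lebesgue differentiation. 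Combining this with assumption (1) and a vector-valued $L^{1/H}$ Hardy--Littlewood maximal bound promotes the convergence to $L^{1/H}(\mathbb{R};\mathbb{D}^{1,2})$. The continuity of $c_H^B\,\delta\circ I_-^{H-\frac{1}{2}}$ from that space into $L^2(\Omega)$, which is the $k=1$, $p=2$ case of \autoref{prop:boundedness_of_int_B}, then identifies the limit of the Skorokhod piece as $\int_0^T g_s\,\delta B^H_s$. The two pieces together yield $L^2(\Omega)$-convergence of the defining Riemann sum, hence a fortiori convergence in probability, giving both forward integrability and \eqref{eq:relationship_for_Skor_B}.

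I expect the main obstacle to be the uniform-in-$\varepsilon$ control required to move the limit inside $\delta$: one has to verify that the moving-average family $\{u_\varepsilon\}$ is uniformly bounded in $L^{1/H}(\mathbb{R};\mathbb{D}^{1,2})$, which after commuting the Malliavin derivative through the average reduces precisely to the vector-valued maximal-function estimate mentioned above (and requires $H<1$). Verifying the applicability of the stochastic Fubini theorem to $s\mapsto g_s I_-^{H-\frac{1}{2}}(\mathbf{1}_{[s,s+\varepsilon]})$ and the fractional integration-by-parts identity are then essentially bookkeeping.
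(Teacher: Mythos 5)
Your argument is essentially correct, but it follows a genuinely different route from the one the paper takes. The paper's proof (following \cite[Theorem 1]{Arr16}, with the white-noise setting replaced by a generic probability space as in \cite{Ben03a}) does not manipulate the $\varepsilon$-approximation directly: it applies the $S$-transform to the $\varepsilon$-approximation of the forward integral, uses the duality \eqref{eq:duality_formula} at the level of $S$-transforms to obtain a new representation, and only then identifies the two limits on the right-hand side of \eqref{eq:relationship_for_Skor_B}. Your proof is instead the ``direct'' decomposition in the spirit of \cite[Theorem 3.7]{BiaOks08} (which the paper explicitly notes is ``proved by a different method''): write $g_s(B^H_{s+\varepsilon}-B^H_s)$ via the Skorokhod product rule, identify the trace term through fractional integration by parts, and pass to the limit using the stochastic Fubini theorem, the Lebesgue differentiation theorem with a maximal-function bound (which needs $1/H>1$, i.e.\ $H<1$, as you note), and the continuity of $\delta\circ I_-^{H-\frac{1}{2}}$ from \autoref{prop:boundedness_of_int_B}. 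What your approach buys is transparency: each of the two limits is matched to a concrete term and the hypotheses (1), (2)(a), (2)(b) are visibly consumed exactly where one expects. What the $S$-transform approach buys is that one never has to justify applying the product rule to $g_s\,\delta\bigl(I_-^{H-\frac{1}{2}}(\mathbf{1}_{[s,s+\varepsilon]})\bigr)$; the duality is only ever tested against nice functionals.

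That last point is the one genuine gap in your write-up. The product formula $F\delta(h)=\delta(Fh)+\langle DF,h\rangle_{L^2(\mathbb{R})}$ for deterministic $h$ requires more than $F\in\mathbb{D}^{1,2}$: one needs $F\delta(h)-\langle DF,h\rangle\in L^2(\Omega)$, and $g_s\,I(h)$ need not be square-integrable for $g_s$ merely in $\mathbb{D}^{1,2}$ (this would follow from $g_s\in L^4(\Omega)$, which is not assumed). The fix is standard but should be stated: approximate $g_s$ by elements of $\mathcal{P}$ in $\mathbb{D}^{1,2}$, pass to the limit in the identity (the left-hand side converges in $L^1(\Omega)$, the two right-hand terms in $L^2(\Omega)$ by the continuity of $\delta$ on $\mathbb{D}^{1,2}(L^2(\mathbb{R}))$), and conclude that the identity holds almost surely with the left-hand side a priori only in $L^1(\Omega)$; after integrating in $s$ the decomposition then shows a posteriori that the $\varepsilon$-approximation lies in $L^2(\Omega)$. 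With that repair, and the routine verification of the hypotheses of the Fubini lemma for $s\mapsto g_s I_-^{H-\frac{1}{2}}(\mathbf{1}_{[s,s+\varepsilon]})$, your argument goes through.
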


\begin{proof}
The proof follows the proof of \cite[Theorem 1]{Arr16}. In particular, instead of the $\varepsilon$-approximation of the forward integral on the left-hand side of \eqref{eq:relationship_for_Skor_B}, its $S$-transform is used which allows to employ the duality \eqref{eq:duality_formula} to determine a new representation of the integral. Then, convergence is verified and the limits are identified as the two terms on the right-hand side of \eqref{eq:relationship_for_Skor_B}. The $S$-transform is given in e.g. \cite[section 2.2]{Ben03a}.
\end{proof}

\begin{remark}
An expression related to formula \eqref{eq:relationship_for_Skor_B} is given in \cite[Theorem 3.7]{BiaOks08} though proved by a different method. The proof here follows the method used in \cite{Arr16} where the equality \eqref{eq:relationship_for_Skor_B} is verified only in the case $g_s=F(B_s^H)$, cf. \cite[Theorem 1]{Arr16}. However, the white-noise setting of \cite{Arr16} is not necessary and it is possible to use a generic probability space as in \cite{Ben03a}.
\end{remark}

The Skorokhod integral with respect to Rosenblatt processes is reviewed now. The definition is similar to \cite[Definition 1]{Tud08} where the finite time interval representation of Rosenblatt processes from \cite[Proposition 1]{Tud08} is used. To simplify computations, the infinite time interval representation given in \autoref{def:Rosenblatt_process} is used here. The Skorokhod integral with respect to a Rosenblatt process is defined via a transfer operator similar to the case of a fractional Brownian motion.

Consider the second-order fractional integral given by
	\begin{equation*}
		(I_{+,+}^{\alpha_1,\alpha_2}f)(x_1,x_2) \overset{\textnormal{Def.}}{=} \frac{1}{\Gamma(\alpha_1)\Gamma(\alpha_2)}\int_{-\infty}^{x_1}\int_{-\infty}^{x_2}f(u,v)(x_1-u)^{\alpha_1-1}(x_2-v)^{\alpha_2-1}\,\mathrm{d}{u}\,\mathrm{d}{v}
	\end{equation*}
for sufficiently nice $f:\mathbb{R}^2\rightarrow\mathbb{R}$ and $\alpha_i\in (0,1)$, $i=1,2$, as in \cite[formula (24.20)]{SamKilMar93}; and define also
\begin{equation*}
	(I_{-,\mathrm{tr}}^{\alpha_1,\alpha_2}f)(x_1,x_2) \overset{\textnormal{Def.}}{=} \frac{1}{\Gamma(\alpha_1)\Gamma(\alpha_2)}\int_{x_1\vee x_2}^\infty f(u)(u-x_1)^{\alpha_1-1}(u-x_2)^{\alpha_2-1}\,\mathrm{d}{u}
\end{equation*}
for sufficiently nice $f:\mathbb{R}\rightarrow \mathbb{R}$. If the function $f$ takes values in a Banach space, then the integrals above are interpreted as Bochner integrals. The operator $I_{-,\mathrm{tr}}^{\frac{H}{2},\frac{H}{2}}$ plays the role of the transfer operator in the following definition of the Skorokhod integral with respect to a Rosenblatt process.

\begin{definition}
\label{def:Skor_int_R}
Let $H\in (\sfrac{1}{2},1)$ and let $M\subseteq\mathbb{R}$ be an interval. Define
	\begin{equation*}
		\varLambda_{R^H}(M) \overset{\textnormal{Def.}}{=} \left\{g:\mathbb{R}\rightarrow L^2(\Omega)\mbox{ such that } I_{-,\mathrm{tr}}^{\frac{H}{2},\frac{H}{2}}(\textbf{1}_Mg)\in \mathrm{Dom}(\delta^2)\right\}.
	\end{equation*}
A stochastic process $g:\mathbb{R}\rightarrow L^2(\Omega)$ is said to be \textit{Skorokhod integrable with respect to the Rosenblatt process $R^H$ on $M$} if $g\in\varLambda_{R^H}(M)$. For such integrands, the \textit{Skorokhod integral} is defined by
	\begin{equation}
	\label{eq:Skor_int_def_Rosenblatt}
		\int_{M}g_s\delta R_s^H \overset{\textnormal{Def.}}{=} c_H^R\,\,\left(\delta^2\circ I_{-,\mathrm{tr}}^{\frac{H}{2},\frac{H}{2}}\right)(\textbf{1}_Mg).
	\end{equation}
\end{definition}

To investigate the mapping properties of the Skorokhod integral with respect to the Rosenblatt process, a technical lemma that is given below is used. Recall the equality
	\begin{equation}
	\label{eq:integral_beta}
		\int_{-\infty}^{u\wedge v} (u-x)^{\alpha -1}(v-x)^{\alpha-1}\,\mathrm{d}{x} = \mathrm{B}(\alpha, 1-2\alpha)|u-v|^{2\alpha-1}, \quad u\neq v,
	\end{equation}
where $\mathrm{B}$ is the Beta function, that is satisfied for $0<\alpha<\sfrac{1}{2}$, see e.g \cite[Lemma 3.1]{Tud13}.

\begin{lemma}
\label{lem:I_tr_bounded}
Let $H\in (\sfrac{1}{2},1)$. The linear operator $I_{-,\mathrm{tr}}^{\frac{H}{2},\frac{H}{2}}$ is bounded from $L^\frac{1}{H}(\mathbb{R})$ to $L^2(\mathbb{R}^2)$.
\end{lemma}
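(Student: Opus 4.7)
The plan is to reduce the claim, via a direct squared-$L^2$ computation, to the Hardy--Littlewood--Sobolev inequality on $\mathbb{R}$. Without loss of generality, assume $f \geq 0$; the general case follows by applying the bound to $|f|$ and using linearity of the integral. The goal is to show
\begin{equation*}
\|I_{-,\mathrm{tr}}^{H/2,H/2} f\|_{L^2(\mathbb{R}^2)}^2 \lesssim \|f\|_{L^{1/H}(\mathbb{R})}^2.
\end{equation*}

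First I would expand the square, interchange the order of integration via Fubini--Tonelli (legal by positivity), and observe that the resulting $x_1,x_2$ integral decouples: the constraint $x_1\vee x_2 < u$ and $x_1\vee x_2 < v$ becomes $x_i<u\wedge v$ for $i=1,2$. Thus
\begin{equation*}
\|I_{-,\mathrm{tr}}^{H/2,H/2} f\|_{L^2(\mathbb{R}^2)}^2 = \frac{1}{\Gamma(H/2)^4}\int_{\mathbb{R}^2} f(u)f(v)\left(\int_{-\infty}^{u\wedge v}(u-x)^{\frac{H}{2}-1}(v-x)^{\frac{H}{2}-1}\,\mathrm{d}{x}\right)^{\!2}\mathrm{d}{u}\,\mathrm{d}{v}.
\end{equation*}
Next I would apply the identity \eqref{eq:integral_beta} with $\alpha=H/2\in(1/4,1/2)$, which is the hypothesis range for that identity and is satisfied since $H\in(\sfrac{1}{2},1)$. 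This yields
\begin{equation*}
\|I_{-,\mathrm{tr}}^{H/2,H/2} f\|_{L^2(\mathbb{R}^2)}^2 = \frac{\mathrm{B}(H/2,\,1-H)^2}{\Gamma(H/2)^4}\int_{\mathbb{R}^2} f(u)f(v)\,|u-v|^{2H-2}\,\mathrm{d}{u}\,\mathrm{d}{v}.
\end{equation*}

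The remaining double integral is precisely the Riesz potential pairing against $|u-v|^{-\lambda}$ with $\lambda = 2-2H$. By the Hardy--Littlewood--Sobolev inequality on $\mathbb{R}$, the estimate
\begin{equation*}
\int_{\mathbb{R}^2} f(u)f(v)\,|u-v|^{-\lambda}\,\mathrm{d}{u}\,\mathrm{d}{v}\lesssim \|f\|_{L^p(\mathbb{R})}^2
\end{equation*}
holds whenever $0<\lambda<1$ and $2/p+\lambda=2$. With $\lambda=2-2H$ one has $0<\lambda<1$ precisely because $\sfrac{1}{2}<H<1$, and the required exponent is $p=1/H$. Substituting back gives the claimed bound with an explicit constant $\mathrm{B}(H/2,1-H)^2/\Gamma(H/2)^4$ times the Hardy--Littlewood--Sobolev constant.

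The only subtlety is really bookkeeping: verifying that the assumption $H\in(\sfrac{1}{2},1)$ lands all parameters in the admissible regimes of both \eqref{eq:integral_beta} (which needs $H/2<\sfrac{1}{2}$) and the Hardy--Littlewood--Sobolev inequality (which needs $0<2-2H<1$, pinning $\lambda$ strictly between $0$ and the dimension $n=1$). Once that is checked, the rest is the reduction outlined above; I do not anticipate any genuine analytic obstacle.
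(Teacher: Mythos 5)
Your proposal is correct and follows essentially the same route as the paper: both reduce, via Fubini and the identity \eqref{eq:integral_beta} with $\alpha=H/2$, to bounding $\int_{\mathbb{R}^2}f(u)f(v)|u-v|^{2H-2}\,\mathrm{d}u\,\mathrm{d}v$ by $\|f\|_{L^{1/H}(\mathbb{R})}^2$. The only cosmetic difference is that you quote the bilinear Hardy--Littlewood--Sobolev inequality directly, whereas the paper obtains the same bound via H\"older's inequality combined with the $L^{1/H}\to L^{1/(1-H)}$ boundedness of the fractional integral $I_+^{2H-1}$ (the Riesz-potential form of the same theorem).
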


\begin{proof}
Let $g\in L^\frac{1}{H}(\mathbb{R})$. It follows that
	\begin{align*}
		\|I_{-,\mathrm{tr}}^{\frac{H}{2},\frac{H}{2}}g\|_{L^2(\mathbb{R}^2)}^2 & \eqsim \int_{\mathbb{R}^2}\left(\int_{x\vee y}^\infty g(u)(u-x)^{\frac{H}{2}-1}(u-y)^{\frac{H}{2}-1}\,\mathrm{d}{u}\right)^2\,\mathrm{d}{x}\,\mathrm{d}{y}\\
		& \eqsim\int_{\mathbb{R}^2}g(u)g(v)\left(\int_{-\infty}^{u\wedge v}(u-x)^{\frac{H}{2}-1}(v-x)^{\frac{H}{2}-1}\,\mathrm{d}{x}\right)^2\,\mathrm{d}{u}\,\mathrm{d}{v}\\
		& \eqsim\int_{\mathbb{R}^2}g(u)g(v)|u-v|^{2H-2}\,\mathrm{d}{u}\,\mathrm{d}{v}\\
		&\eqsim\int_{\mathbb{R}}g(v)\int_{-\infty}^vg(u)(u-v)^{2H-2}\,\mathrm{d}{u}\,\mathrm{d}{v}\\
		 &\lesssim\left(\int_{\mathbb{R}}|g(u)|^\frac{1}{H}\,\mathrm{d}{u}\right)^{H}\left(\int_{\mathbb{R}}\left|\int_{-\infty}^vg(u)(u-v)^{2H-2}\,\mathrm{d}{u}\right|^\frac{1}{1-H}\,\mathrm{d}{v}\right)^{1-H}\numberthis\label{eq:lem:I_tr_bounded:split_int}
	\end{align*}
where equation \eqref{eq:integral_beta} and H\"older's inequality are used. The rightmost expression in \eqref{eq:lem:I_tr_bounded:split_int} is (modulo a constant) the $L^\frac{1}{1-H}(\mathbb{R})$-norm of $I_+^{2H-1}g$. The use of \cite[Theorem 5.3]{SamKilMar93} gives the estimate
	\begin{equation*}
		\|I_{+}^{2H-1}g\|_{L^\frac{1}{1-H}(\mathbb{R})}\lesssim \|g\|_{L^\frac{1}{H}(\mathbb{R})}.
	\end{equation*}
\end{proof}

The following proposition provides a mapping property of the Skorokhod integral with respect to a Rosenblatt process. It ensures that stochastic processes from the space $L^\frac{1}{H}(M;\mathbb{D}^{2,2})$ are Skorokhod integrable with respect to the Rosenblatt process $R^H$ and the stochastic integral is a square-integrable random variable. Integrability is also treated in \cite[Lemma 1 and Corollary 3]{Tud08}.

\begin{proposition}
\label{prop:boundedness_of_int_R}
Let $H\in(\sfrac{1}{2},1)$ and $M\subseteq\mathbb{R}$ be an interval. The linear operator $\int_M(\cdots)\delta R^H$ is bounded from $L^{\frac{1}{H}}(M;\mathbb{D}^{k,p})$ to $\mathbb{D}^{k-2,p}$ for every integer $k\geq 2$ and every $p$ such that $1\leq pH<\infty$.
\end{proposition}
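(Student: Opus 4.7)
The plan is to follow the outline indicated in the proof of \autoref{prop:boundedness_of_int_B}, but now with the two-dimensional transfer operator and with \autoref{lem:I_tr_bounded} playing the role of the Hardy--Littlewood--Sobolev estimate used in the one-dimensional case. Starting from the definition \eqref{eq:Skor_int_def_Rosenblatt}, I would first invoke the Meyer inequality for the iterated Skorokhod integral $\delta^2$ (see e.g.\ \cite[Proposition~1.5.7]{Nua06}, iterated once) to obtain
$$\left\| \bigl(\delta^2\circ I_{-,\mathrm{tr}}^{\frac{H}{2},\frac{H}{2}}\bigr)(\bm{1}_M g)\right\|_{\mathbb{D}^{k-2,p}} \lesssim \left\|I_{-,\mathrm{tr}}^{\frac{H}{2},\frac{H}{2}}(\bm{1}_M g)\right\|_{\mathbb{D}^{k,p}(L^2(\mathbb{R}^2))}$$
for every $k\geq 2$ and every $p>1$; the assumption $pH\geq 1$ together with $H<1$ automatically forces $p>1$, so this restriction is harmless.

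Next, the transfer operator $I_{-,\mathrm{tr}}^{\frac{H}{2},\frac{H}{2}}$ is deterministic and, being defined as a Bochner integral, commutes with the Malliavin derivative, so that $D^l I_{-,\mathrm{tr}}^{\frac{H}{2},\frac{H}{2}}(\bm{1}_M g) = I_{-,\mathrm{tr}}^{\frac{H}{2},\frac{H}{2}}(\bm{1}_M D^l g)$ for $l = 0, 1, \ldots, k$. Viewing $\bm{1}_M D^l g$ as a function from $\mathbb{R}$ into the Hilbert space $L^2(\mathbb{R}^l)$, a Hilbert-space-valued version of \autoref{lem:I_tr_bounded} should yield
$$\left\| I_{-,\mathrm{tr}}^{\frac{H}{2},\frac{H}{2}}(\bm{1}_M D^l g)\right\|_{L^2(\mathbb{R}^2;L^2(\mathbb{R}^l))} \lesssim \left\|\bm{1}_M D^l g\right\|_{L^{1/H}(\mathbb{R};L^2(\mathbb{R}^l))}.$$
Raising to the $p$-th power, taking the expectation, and applying Minkowski's integral inequality with exponent $pH\geq 1$ to exchange the $L^p(\Omega)$- and $L^{1/H}(M)$-norms gives
$$\mathbb{E}\left\| I_{-,\mathrm{tr}}^{\frac{H}{2},\frac{H}{2}}(\bm{1}_M D^l g)\right\|_{L^2(\mathbb{R}^2)\otimes L^2(\mathbb{R}^l)}^p \lesssim \left( \int_M \left(\mathbb{E}\left\|D^l g_s\right\|_{L^2(\mathbb{R}^l)}^p\right)^{\frac{1}{pH}} \mathrm{d}{s}\right)^{pH}.$$
Summing these estimates over $l = 0, 1, \ldots, k$ reproduces the norm $\|g\|_{L^{1/H}(M;\mathbb{D}^{k,p})}^p$, which is the claimed bound.

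The main technical obstacle is the Hilbert-space-valued version of \autoref{lem:I_tr_bounded}. I expect it to follow by repeating the chain of estimates culminating in \eqref{eq:lem:I_tr_bounded:split_int} with the product $g(u)g(v)$ replaced by the inner product $\langle (D^l g)_u, (D^l g)_v\rangle_{L^2(\mathbb{R}^l)}$ and bounded by Cauchy--Schwarz; equivalently, by expanding the $L^2(\mathbb{R}^2;L^2(\mathbb{R}^l))$-norm via Fubini and applying the scalar inequality pointwise in the auxiliary $\mathbb{R}^l$-variables, followed by \cite[Theorem~5.3]{SamKilMar93}. A minor additional point is to confirm that the iterated Meyer inequality is available at precisely the Sobolev--Watanabe scale $\mathbb{D}^{k,p}\to \mathbb{D}^{k-2,p}$ in the infinite-time-interval setting, which is standard for $p>1$ but deserves an explicit citation.
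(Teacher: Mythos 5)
Your proposal is correct and follows essentially the same route as the paper's proof: Meyer's inequality for the iterated Skorokhod integral $\delta^2$, commutation of the deterministic transfer operator with $D^l$, the $L^{1/H}\to L^2$ bound of \autoref{lem:I_tr_bounded} (which the paper applies pointwise in the auxiliary $\mathbb{R}^l$-variables and combines with an extra application of Minkowski's inequality, rather than stating a vector-valued lemma, but the two are equivalent since the kernel $|u-v|^{2H-2}$ is nonnegative), and Minkowski's integral inequality with exponent $pH\geq 1$. The only step you leave implicit is the concluding density/limit argument extending the a priori estimate from $\mathcal{P}$-valued step functions to general $g\in L^{1/H}(M;\mathbb{D}^{k,p})$, which the paper dispatches as standard.
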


\begin{proof}
Let $g$ be a $\mathcal{P}$-valued step function. By \cite[Theorem 2.5.5]{NouPec12}, it follows that
	\begin{align*}
		\left\|\int_Mg_s\delta R_s^H\right\|_{\mathbb{D}^{k-2,p}}^p & \eqsim \left\|\delta^2\left(I_{-,\mathrm{tr}}^{\frac{H}{2},\frac{H}{2}}(\bm{1}_{M}g)\right)\right\|_{\mathbb{D}^{k-2,p}}^p \\
		& \lesssim \left\|I_{-,\mathrm{tr}}^{\frac{H}{2},\frac{H}{2}}(\bm{1}_{M}g)\right\|_{\mathbb{D}^{k,p}(L^2(\mathbb{R}^2))}^p \\
		& = \mathbb{E}\left\|I_{-,\mathrm{tr}}^{\frac{H}{2},\frac{H}{2}}(\bm{1}_{M}g)\right\|_{L^2(\mathbb{R}^2)}^p+ \sum_{l=1}^k\mathbb{E}\left\|I_{-,\mathrm{tr}}^{\frac{H}{2},\frac{H}{2}}(\bm{1}_MD^lg)\right\|_{L^2(\mathbb{R}^2)\otimes L^2(\mathbb{R}^l)}^p\numberthis\label{eq:prop:intR:Dnorm_integral}
	\end{align*}
Using \autoref{lem:I_tr_bounded} and Minkowski's inequality \cite[Theorem 202]{HarLittlePol34} twice successively, it follows that
	\begin{align*}
		\mathbb{E}\left\|I_{-,\mathrm{tr}}^{\frac{H}{2},\frac{H}{2}}(\bm{1}_MD^lg)\right\|_{L^2(\mathbb{R}^2)\otimes L^2(\mathbb{R}^l)}^p & = \mathbb{E}\left(\int_{\mathbb{R}^l}\left\|I_{-,\mathrm{tr}}^{\frac{H}{2},\frac{H}{2}}(\bm{1}_{M}D^l_xg)\right\|^2_{L^2(\mathbb{R}^2)}\,\mathrm{d}{x}\right)^\frac{p}{2}\\
		& \lesssim \mathbb{E}\left(\int_{\mathbb{R}^l}\left(\int_M\left|D_x^lg_s\right|^\frac{1}{H}\,\mathrm{d}{s}\right)^{2H}\,\mathrm{d}{x}\right)^\frac{p}{2}\\
		& \lesssim \mathbb{E}\left(\int_M\left(\int_{\mathbb{R}^l}\left|D_x^lg_s\right|^2\,\mathrm{d}{x}\right)^\frac{1}{2H}\,\mathrm{d}{s}\right)^{pH}\\
		& \lesssim \left(\int_M\left(\mathbb{E}\left(\int_{\mathbb{R}^l}\left|D_x^lg_s\right|^2\,\mathrm{d}{x}\right)^\frac{p}{2}\right)^\frac{1}{pH}\,\mathrm{d}{s}\right)^{pH}\\
		& \eqsim \left(\int_M\left(\mathbb{E}\left\|D^lg_s\right\|_{L^2(\mathbb{R}^l)}^p\right)^\frac{1}{pH}\,\mathrm{d}{s}\right)^{pH}.
	\end{align*}
Hence, by using Meyer's inequality \cite[Theorem 1.8]{Wat84} and the embedding $\mathbb{D}^{k,p}\hookrightarrow \mathbb{D}^{l,p}$, it follows that
	\begin{equation}
	\label{eq:prop:intR:estimate_on_Dl}
		\mathbb{E}\left\|I_{-,\mathrm{tr}}^{\frac{H}{2},\frac{H}{2}}(\bm{1}_MD^lg)\right\|_{L^2(\mathbb{R}^2)\otimes L^2(\mathbb{R}^l)}^p \lesssim \left(\int_M \|g_s\|_{\mathbb{D}^{l,p}}^\frac{1}{H}\,\mathrm{d}{s}\right)^{pH}\lesssim \left(\int_M \|g_s\|_{\mathbb{D}^{k,p}}^\frac{1}{H}\,\mathrm{d}{s}\right)^{pH}
	\end{equation}
is satisfied for every integer $l\leq k$. Consequently, it follows that
	\begin{equation*}
		\left\|\int_Mg_s\delta R_s^H\right\|_{\mathbb{D}^{k-2,p}} \lesssim \|g\|_{L^\frac{1}{H}(M;\mathbb{D}^{k,p})}
	\end{equation*}
is satisfied by estimating the terms in \eqref{eq:prop:intR:Dnorm_integral} by \eqref{eq:prop:intR:estimate_on_Dl}. The claim for general $g\in L^\frac{1}{H}(M;\mathbb{D}^{k,p})$ is proved by a standard limit argument.
\end{proof}

The relationship between the forward and the Skorokhod integral with respect to a Rosenblatt process is given in the next proposition. For this relationship, define the second-order fractional stochastic derivative $\nabla^{\alpha,\alpha}$, following \cite{Arr16}, by
	\begin{equation*}
		\nabla^{\alpha,\alpha} \overset{\textnormal{Def.}}{=} I_{+,+}^{\alpha,\alpha}\circ D^2.
	\end{equation*}
It can be shown that if $\alpha\in (0,\sfrac{1}{2})$, the operator $\nabla^{\alpha,\alpha}$ extends to a bounded linear operator from the Sobolev-Watanabe space $\mathbb{D}^{k,p}$ to the space $L^\frac{2}{1-2\alpha}(\mathbb{R}^2;\mathbb{D}^{k-2,2})$ for $k\geq 2$ and $p\geq 2$ (and, consequently, to $L^\frac{2}{1-2\alpha}(M^2;\mathbb{D}^{k-2,2})$ for a bounded interval $M\subset\mathbb{R}$), cf. \cite[Proposition 16]{Arr16}. The relationship between the forward and the Skorokhod integral with respect to a Rosenblatt process follows.

\begin{proposition}
\label{prop:relationship_for_Skor_R}
Let $H\in (\sfrac{1}{2},1)$ and let $g=(g_s)_{s\in [0,T]}$ be a stochastic process such that the following conditions are satisfied:
	\begin{enumerate}
		\item It holds that
			\begin{equation*}
				\int_0^T\|g_s\|_{\mathbb{D}^{2,2}}^\frac{1}{H}\,\mathrm{d}{s} <\infty.
			\end{equation*}
		\item\label{ass:(2)dR} \begin{enumerate}
				  \itemsep0em
				  	\item For almost every $s\in [0,T]$, it follows that
				  		\begin{align*}
				  			& \lim_{\varepsilon\downarrow 0}\esssup_{u\in (s,s+\varepsilon)}\|(\nabla^\frac{H}{2}g_s)(u)-(\nabla^\frac{H}{2}g_s)(s)\|_{\mathbb{D}^{1,2}}=0,\\
				  			& \lim_{\varepsilon\downarrow 0}\esssup_{u\in (s,s+\varepsilon)}\|(\nabla^{\frac{H}{2},\frac{H}{2}}g_s)(u,u)-(\nabla^{\frac{H}{2},\frac{H}{2}}g_s)(s,s)\|_{L^2(\Omega)}=0.
				  		\end{align*}
				  	\item There exist functions $p_1\in L^\frac{2}{1+H}(0,T)$ and $p_2\in L^1(0,T)$ such that, for almost every $s\in [0,T]$, the estimates
				  		\begin{align*}
				  			& \|(\nabla^\frac{H}{2}g_s)(u)\|_{\mathbb{D}^{1,2}}\leq p_1(s),\\
				  			& \|(\nabla^{\frac{H}{2},\frac{H}{2}}g_s)(u,u)\|_{L^2(\Omega)}\leq p_2(s)
				  		\end{align*}
				  	are satisfied for almost every $u\in[s,T]$.
				  \end{enumerate}
	\end{enumerate}
Then $g$ is forward integrable with respect to $R^H$ on $[0,T]$ and the following equality is satisfied in $L^2(\Omega)$:
	\begin{equation}
	\label{eq:relationship_for_Skor_R}
		\int_0^Tg_s\,\mathrm{d}^-R_s^H = \int_0^Tg_s\delta R_s^H + 2c_H^{B,R}\int_0^T (\nabla^\frac{H}{2}g_s)(s)\delta{B}_s^{\frac{H}{2}+\frac{1}{2}} + c_H^R\int_0^T(\nabla^{\frac{H}{2},\frac{H}{2}}g_s)(s,s)\,\mathrm{d}{s}.
	\end{equation}
\end{proposition}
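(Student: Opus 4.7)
The plan is to adapt the $S$-transform argument used for \autoref{prop:relationship_for_Skor_B} to the second-order structure of the Rosenblatt process. I start with the $\varepsilon$-approximation
\begin{equation*}
J_\varepsilon \overset{\textnormal{Def.}}{=} \int_0^T g_s\,\frac{R^H_{s+\varepsilon}-R^H_s}{\varepsilon}\,\mathrm{d}{s}
\end{equation*}
of the forward integral and compute its $S$-transform. Because $R^H$ is a double Wiener--It\^o integral of the explicit kernel
\begin{equation*}
f_s^t(y_1,y_2) = C_H^R\int_s^t (u-y_1)_+^{\frac{H}{2}-1}(u-y_2)_+^{\frac{H}{2}-1}\,\mathrm{d}{u},
\end{equation*}
the product $g_s\cdot (R^H_{s+\varepsilon}-R^H_s)$ decomposes via the Wiener product formula (equivalently, via iterated application of the duality \eqref{eq:duality_formula}) into three natural pieces, corresponding respectively to zero, one, and two Malliavin contractions of $g_s$ against the two arguments of $f^{s+\varepsilon}_s$.

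Specifically, the $S$-transform of $J_\varepsilon$ is the sum of the $S$-transforms of
\begin{align*}
A_\varepsilon &= c_H^R\,\delta^2\!\left(\int_0^T \bm{1}_{[0,T]}g_s\,\frac{I^{\frac{H}{2},\frac{H}{2}}_{-,\mathrm{tr}}(\bm{1}_{[s,s+\varepsilon]})}{\varepsilon}\,\mathrm{d}{s}\right),\\
B_\varepsilon &= 2\,c_H^R\int_0^T \delta\!\left(\frac{1}{\varepsilon}\int_s^{s+\varepsilon} I_{-}^{\frac{H}{2}}\bigl(I_+^{\frac{H}{2}}(Dg_s)(\cdot)\bigr)\,\mathrm{d}{u}\right)\mathrm{d}{s},\\
C_\varepsilon &= c_H^R\int_0^T\frac{1}{\varepsilon}\int_s^{s+\varepsilon}(\nabla^{\frac{H}{2},\frac{H}{2}}g_s)(u,u)\,\mathrm{d}{u}\,\mathrm{d}{s},
\end{align*}
up to routine Fubini rearrangements; here the factor $2$ in $B_\varepsilon$ arises from the symmetry of the two kernel variables. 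One identifies $A_\varepsilon$ as the $\varepsilon$-approximation of $\int_0^T g_s\delta R^H_s$; $C_\varepsilon$ is already a deterministic integral; and in $B_\varepsilon$ one uses that the transfer operator for $B^{\frac{H}{2}+\frac{1}{2}}$ is $I_{-}^{\frac{H}{2}}$, together with the constant identity $c_H^R = c_H^{B,R}\,c^{B}_{\frac{H}{2}+\frac{1}{2}}$ from \autoref{rem:constants}, to recognize the limit of $B_\varepsilon$ as $2c_H^{B,R}\int_0^T(\nabla^{\frac{H}{2}}g_s)(s)\delta B^{\frac{H}{2}+\frac{1}{2}}_s$.

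Convergence of the three pieces as $\varepsilon\downarrow 0$ is then verified in $L^2(\Omega)$. For $A_\varepsilon$ one uses \autoref{prop:boundedness_of_int_R} (integrability assumption~1); for $C_\varepsilon$ assumptions (2)(a)--(b) on $\nabla^{\frac{H}{2},\frac{H}{2}}g_s$ combined with dominated convergence; and for $B_\varepsilon$ the corresponding continuity and $L^{\frac{2}{1+H}}$-domination of $(u,s)\mapsto(\nabla^{\frac{H}{2}}g_s)(u)$ together with the mapping property of $\int_0^T(\cdots)\delta B^{\frac{H}{2}+\frac{1}{2}}$ from \autoref{prop:boundedness_of_int_B}. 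This justifies passing to the limit in probability, yielding both the existence of the forward integral and the equality \eqref{eq:relationship_for_Skor_R}.

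The main obstacle is the cross term $B_\varepsilon$: one must perform the bookkeeping for the factor $2$ from the symmetry of the Rosenblatt kernel, carefully identify the one-variable residue of the transfer operator $I^{\frac{H}{2},\frac{H}{2}}_{-,\mathrm{tr}}$ as the transfer operator of $B^{\frac{H}{2}+\frac{1}{2}}$, and check that the domination exponent $\frac{2}{1+H}$ on $p_1$ is exactly what is needed so that $(\nabla^{\frac{H}{2}}g_s)(s)$ lies in the integrability class $L^{\frac{1}{(H/2+1/2)}}([0,T];\mathbb{D}^{1,2})$ required by \autoref{prop:boundedness_of_int_B} for the limiting Skorokhod integral. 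Once these three identifications are in place, convergence of $C_\varepsilon$ and of $A_\varepsilon$ is comparatively routine, relying on the same dominated convergence / boundedness scheme already used in \autoref{prop:relationship_for_Skor_B}.
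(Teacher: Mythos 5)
Your proposal is correct and follows essentially the same route as the paper, which simply defers to the proof of \cite[Theorem 3]{Arr16}: take the $S$-transform of the $\varepsilon$-approximation, use the duality \eqref{eq:duality_formula} (equivalently the product formula for $\delta^2$, cf.\ \cite{NuaZak87}) to split into the zero-, one-, and two-contraction pieces, identify the one-contraction piece as a Skorokhod integral with respect to $B^{\frac{H}{2}+\frac{1}{2}}$ via $c_H^R=c_H^{B,R}c^B_{\frac{H}{2}+\frac{1}{2}}$, and pass to the limit using the stated integrability and continuity/domination hypotheses. Your bookkeeping of the symmetry factor $2$, of the transfer operators, and of the exponent $\frac{2}{1+H}$ matches the structure the paper relies on.
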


\begin{proof}
The proof follows similarly to the proof of \cite[Theorem 3]{Arr16} where equality \eqref{eq:relationship_for_Skor_R} is only proved in the case $g_s=F(R_s^H)$.
\end{proof}

\begin{remark}
Equality \eqref{eq:relationship_for_Skor_R} should be compared to the equality given in \cite[Theorem 2]{Tud08} where two correction terms arise as a result of the integration by parts formula for the double Skorokhod integral $\delta^2$, see \cite[formula (34)]{Tud08} or \cite{NuaZak87}. These two terms have a similar structure as the ones in expression \eqref{eq:relationship_for_Skor_R} and they are given in terms of two limits, called the \textit{trace of order 1} and \textit{trace of order 2}. The result in \cite[Theorem 2]{Tud08} is then proved under the assumption that these limits exist. In \autoref{prop:relationship_for_Skor_R}, sufficient conditions for this convergence are given, and, moreover, the term that corresponds to the trace of order 1 is identified as the Skorokhod integral with respect to the fractional Brownian motion of the Hurst parameter $\sfrac{H}{2} + \sfrac{1}{2}$.
\end{remark}

\subsection{Additional lemmas}

This section contains several results that are useful in the rest of the paper and that can also be useful for applications. Initially, a product rule for the fractional stochastic derivatives $\nabla^\alpha$ and $\nabla^{\alpha,\alpha}$ is given.

\begin{lemma}
\label{lem:product_rule}
Let $0<\alpha<\sfrac{1}{2}$.
\begin{enumerate}[label=\arabic*)]
	\item\label{lem:prod_H_1} If $G_1,G_2\in\mathbb{D}^{1,4}$, then $G_1G_2\in\mathbb{D}^{1,2}$ and
	\begin{equation*}
		\nabla^{\alpha}(G_1G_2) = (\nabla^\alpha G_1)G_2 + G_1(\nabla^\alpha G_2).
	\end{equation*}
	\item\label{lem:prod_H_2} If $G_1,G_2\in\mathbb{D}^{2,4}$, then $G_1G_2\in\mathbb{D}^{2,2}$ and
	\begin{equation*}
		\nabla^{\alpha,\alpha}(G_1G_2) = (\nabla^{\alpha,\alpha}G_1)G_2 + 2(\nabla^\alpha G_1)\tilde{\otimes}(\nabla^\alpha G_2) + G_1(\nabla^{\alpha,\alpha}G_2)
	\end{equation*}
	where $\tilde{\otimes}$ denotes the symmetrization of the tensor product $\otimes$.
\end{enumerate}
\end{lemma}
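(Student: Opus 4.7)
The plan is to deduce both product rules from the classical Leibniz rules for the Malliavin derivatives $D$ and $D^2$, combined with the crucial observation that the deterministic integral operators $I_+^\alpha$ and $I_{+,+}^{\alpha,\alpha}$ act only on the Malliavin variables $x, (x_1,x_2) \in \mathbb{R}$ or $\mathbb{R}^2$, so they commute with multiplication by random variables which do not depend on those variables. The algebraic identities will essentially write themselves once integrability in the appropriate Sobolev-Watanabe spaces is verified; the latter is where most of the care has to go.

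For part \ref{lem:prod_H_1}, I would first argue that $G_1 G_2 \in \mathbb{D}^{1,2}$ by approximation with polynomial random variables in $\mathcal{P}$ (on which everything is trivial) and passage to the limit. The key a priori estimate comes from the classical product rule $D(G_1G_2) = (DG_1)G_2 + G_1(DG_2)$ combined with H\"older's inequality:
\begin{equation*}
  \mathbb{E}\|D(G_1G_2)\|_{L^2(\mathbb{R})}^2 \le 2\mathbb{E}\bigl[\|DG_1\|_{L^2(\mathbb{R})}^2|G_2|^2\bigr] + 2\mathbb{E}\bigl[|G_1|^2\|DG_2\|_{L^2(\mathbb{R})}^2\bigr] \le 2\|G_1\|_{\mathbb{D}^{1,4}}^2\|G_2\|_{\mathbb{D}^{1,4}}^2,
\end{equation*}
and a similar bound on $\mathbb{E}|G_1G_2|^2$. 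Applying the linear operator $I_+^\alpha$ (which acts in the Malliavin variable $x$) to both sides of the Leibniz identity and using that $G_2$, being independent of $x$, passes through the integral, gives
\begin{equation*}
  \nabla^\alpha(G_1G_2) = I_+^\alpha\bigl((DG_1)G_2 + G_1(DG_2)\bigr) = (\nabla^\alpha G_1)G_2 + G_1(\nabla^\alpha G_2).
\end{equation*}

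For part \ref{lem:prod_H_2}, I would iterate the first-order product rule to obtain, pointwise in $(x_1,x_2)$,
\begin{equation*}
  D^2_{(x_1,x_2)}(G_1G_2) = (D^2_{(x_1,x_2)}G_1)G_2 + (D_{x_1}G_1)(D_{x_2}G_2) + (D_{x_2}G_1)(D_{x_1}G_2) + G_1(D^2_{(x_1,x_2)}G_2),
\end{equation*}
in which the two middle terms together are precisely $2(DG_1)\tilde{\otimes}(DG_2)$ in the notation of the statement. Integrability in $\mathbb{D}^{2,2}$ follows from repeated H\"older estimates: in particular,
\begin{equation*}
  \mathbb{E}\|(DG_1)\otimes(DG_2)\|_{L^2(\mathbb{R}^2)}^2 = \mathbb{E}\bigl[\|DG_1\|_{L^2(\mathbb{R})}^2\|DG_2\|_{L^2(\mathbb{R})}^2\bigr] \le \|G_1\|_{\mathbb{D}^{1,4}}^2\|G_2\|_{\mathbb{D}^{1,4}}^2,
\end{equation*}
and analogous bounds for $(D^2G_1)G_2$ and $G_1(D^2G_2)$ using $G_i \in \mathbb{D}^{2,4}$. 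Applying the two-variable transfer operator $I_{+,+}^{\alpha,\alpha}$ to both sides of the second-order Leibniz identity and again pulling out the factors that do not depend on the Malliavin variables yields the claimed formula.

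The only real obstacle is justifying the pointwise Leibniz identities at the level of elements of $\mathbb{D}^{1,4}$ and $\mathbb{D}^{2,4}$ (not just polynomial random variables). This is handled by the standard approximation argument: pick sequences $G_i^{(n)} \in \mathcal{P}$ with $G_i^{(n)} \to G_i$ in $\mathbb{D}^{2,4}$, note that each $G_1^{(n)}G_2^{(n)} \in \mathcal{P}$ and satisfies the identities trivially, and pass to the limit using the closedness of $D$ and $D^2$ together with the H\"older bounds above to verify convergence of all bilinear terms in the appropriate $L^p(\Omega; L^2(\mathbb{R}^k))$ norms. The commutation of $I_+^\alpha$ and $I_{+,+}^{\alpha,\alpha}$ with multiplication by random variables is then immediate from the definition as Bochner integrals in the Malliavin variables.
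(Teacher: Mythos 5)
Your proposal is correct and follows essentially the same route as the paper: the paper simply cites the Leibniz rule for $D$ and $D^2$ on $\mathbb{D}^{1,4}$ and $\mathbb{D}^{2,4}$ (Exercise 2.3.10 of Nourdin--Peccati) and then composes with the deterministic transfer operators $I_+^\alpha$ and $I_{+,+}^{\alpha,\alpha}$, which is exactly the approximation-plus-H\"older argument and the commutation step you spell out. The only detail worth making explicit is the factorization $I_{+,+}^{\alpha,\alpha}(f_1\otimes f_2)=I_+^\alpha(f_1)\otimes I_+^\alpha(f_2)$ used on the cross terms, which you implicitly invoke and which the paper records separately in the proof of its chain-rule corollary.
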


\begin{proof}
The proof follows directly from \cite[Exercise 2.3.10]{NouPec12}.
\end{proof}

The following chain rule for the Malliavin derivative implies a chain rule for the fractional stochastic derivatives. Furthermore, its proof gives estimates that are used in the proof of \autoref{prop:Ito_for_integral_only}.

\begin{lemma}
\label{lem:chain_rule}
Let $p>1$.
\begin{enumerate}[label=\arabic*)]
	\item\label{lem:chain_1} Let $f\in\mathscr{C}^1(\mathbb{R})$ be such that
\begin{equation*}
	\label{eq:f'_poly_growth}
		|f'(x)|\leq c(1+|x|^\beta), \quad x\in\mathbb{R},
	\end{equation*}
is satisfied for some $c\geq 0$ and $\beta\geq 0$. If $G\in\mathbb{D}^{1,p(\beta+1)}$, then $f(G)\in\mathbb{D}^{1,p}$ and
		\begin{equation*}
			D f(G) = f'(G)DG.
		\end{equation*}
	\item\label{lem:chain_2} Let $f\in\mathscr{C}^2(\mathbb{R})$ be such that
\begin{equation}
	\label{eq:f''_poly_growth}
		|f''(x)|\leq c(1+|x|^\beta), \quad x\in\mathbb{R},
	\end{equation}
is satisfied for some $c\geq 0$ and $\beta\geq 0$. If $G\in\mathbb{D}^{2,p(\beta+2)}$, then $f(G)\in\mathbb{D}^{2,p}$ and
		\begin{equation}
		\label{eq:chain_2}
			D^2f(G) = f''(G)D G\otimes D G + f'(G)D^2 G.
		\end{equation}
\end{enumerate}
\end{lemma}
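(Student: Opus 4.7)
The plan is to reduce each part to the classical Malliavin chain rule for smooth bounded-derivative functionals (as stated, for example, in \cite[Proposition 1.2.4]{Nua06} or \cite[Proposition 2.3.8]{NouPec12}) by means of an approximation argument that preserves the polynomial growth of the derivatives, and then to pass to the limit using Hölder's inequality and the closability of $D$ and $D^2$. The role of the assumption $G\in\mathbb{D}^{1,p(\beta+1)}$ (resp.\ $G\in\mathbb{D}^{2,p(\beta+2)}$) is precisely to provide enough integrability of $G$, $DG$, and $D^2G$ to carry out this passage in $L^p(\Omega;L^2(\mathbb{R}^k))$.

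Concretely, for \ref{lem:chain_1} I would fix a smooth cutoff $\phi_n\in\mathscr{C}_c^\infty(\mathbb{R})$ with $\phi_n\equiv 1$ on $[-n,n]$, $0\leq\phi_n\leq 1$, and set $f_n(x)=f(0)+\int_0^xf'(t)\phi_n(t)\,\mathrm{d}t$. Then $f_n\in\mathscr{C}^1$ with $f_n'=f'\phi_n$ bounded, the bound $|f_n'(x)|\leq c(1+|x|^\beta)$ holds uniformly in $n$, and $|f_n(x)|\leq C(1+|x|^{\beta+1})$ uniformly. The classical chain rule gives $f_n(G)\in\mathbb{D}^{1,p}$ with $Df_n(G)=f_n'(G)\,DG$. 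Pointwise convergence $f_n\to f$, $f_n'\to f'$ and dominated convergence yield $f_n(G)\to f(G)$ in $L^p(\Omega)$, while for the derivative term
\begin{equation*}
\mathbb{E}\|f_n'(G)DG-f'(G)DG\|_{L^2(\mathbb{R})}^p=\mathbb{E}\bigl[|f_n'(G)-f'(G)|^p\,\|DG\|_{L^2(\mathbb{R})}^p\bigr]
\end{equation*}
is dominated by $(2c)^p\mathbb{E}[(1+|G|^\beta)^p\|DG\|_{L^2(\mathbb{R})}^p]$. Applying Hölder's inequality with conjugate exponents $q_1=(\beta+1)/\beta$ and $q_2=\beta+1$, this is controlled by a product of $\mathbb{E}|G|^{p(\beta+1)}$ and $\mathbb{E}\|DG\|_{L^2(\mathbb{R})}^{p(\beta+1)}$, both finite by the hypothesis $G\in\mathbb{D}^{1,p(\beta+1)}$. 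Dominated convergence then gives convergence to $0$, and the closability of $D$ from $L^p(\Omega)$ to $L^p(\Omega;L^2(\mathbb{R}))$ identifies the limit as $Df(G)=f'(G)DG$.

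For \ref{lem:chain_2} the same scheme is applied one order higher: define $f_n$ by $f_n''(x)=f''(x)\phi_n(x)$ together with $f_n'(0)=f'(0)$, $f_n(0)=f(0)$, so that $|f_n''|\leq c(1+|x|^\beta)$ uniformly, and consequently $|f_n'(x)|\leq C(1+|x|^{\beta+1})$ and $|f_n(x)|\leq C(1+|x|^{\beta+2})$ uniformly in $n$. For these approximations the classical second-order chain rule yields
\begin{equation*}
D^2f_n(G)=f_n''(G)\,DG\otimes DG+f_n'(G)\,D^2G.
\end{equation*}
The tensor-product term is estimated by Hölder with exponents $(\beta+2)/\beta$ and $(\beta+2)/2$, giving a bound in terms of $\mathbb{E}|G|^{p(\beta+2)}$ and $\mathbb{E}\|DG\|_{L^2(\mathbb{R})}^{p(\beta+2)}$; the term $f_n'(G)D^2G$ is estimated by Hölder with exponents $(\beta+2)/(\beta+1)$ and $\beta+2$, giving a bound in terms of $\mathbb{E}|G|^{p(\beta+2)}$ and $\mathbb{E}\|D^2G\|_{L^2(\mathbb{R}^2)}^{p(\beta+2)}$. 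All of these are finite under the hypothesis $G\in\mathbb{D}^{2,p(\beta+2)}$. Pointwise convergence plus dominated convergence then gives convergence of $f_n(G)$, $Df_n(G)$, and $D^2f_n(G)$ in the respective $L^p$ spaces; closability of $D^2$ identifies the limit as the formula \eqref{eq:chain_2}.

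The only real obstacle is the careful matching of Hölder exponents so that the polynomial weight of order $\beta$ (or $\beta+1$) and the powers of $\|DG\|$ or $\|D^2G\|$ combine to yield integrability exactly at the exponent $p(\beta+1)$ or $p(\beta+2)$ supplied by the hypothesis; the tensor product term in part \ref{lem:chain_2} is the place where one sees that the right power of $DG$ to absorb is $2p$, forcing the exponent $\beta+2$ rather than $\beta+1$. Once this bookkeeping is carried out, the rest is a routine dominated convergence argument combined with the closedness of the iterated Malliavin derivatives.
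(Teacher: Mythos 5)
Your proposal is correct and follows essentially the same route as the paper: establish the formula for nice approximants, control the error terms via the polynomial growth of the derivatives together with H\"older's inequality at the exponents $p(\beta+1)$ and $p(\beta+2)$, and conclude by closability of $D$ and $D^2$ (the paper cites the first part to Nualart--Nualart and sketches the second via smooth $f$ and polynomial $G$ plus exactly the estimates $\|f(G)\|_{\mathbb{D}^{2,p}}\lesssim 1+\|G\|_{\mathbb{D}^{2,p(\beta+2)}}^{\beta+2}$ that your exponent bookkeeping reproduces). The only cosmetic difference is that you approximate $f$ by cutting off its top derivative while the paper also approximates $G$ by elements of $\mathcal{P}$; both variants are standard and your H\"older exponents are matched correctly.
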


\begin{proof}
The proof of \ref{lem:chain_1} is given in \cite{NuaNua18} and \ref{lem:chain_2} is proved similarly. Initially, equality \eqref{eq:chain_2} is verified for $f$ smooth and $G\in\mathcal{P}$. The result is then extended for general $f$ and $G$ by approximation using the estimates
\begin{equation}
\label{eq:estimates_on_norms_of_f}
		\|f(G)\|_{\mathbb{D}^{1,p}} \lesssim(1+\|G\|_{\mathbb{D}^{1,p(\beta+2)}}^{\beta+2})\quad\mbox{and}\quad \|f(G)\|_{\mathbb{D}^{2,p}} \lesssim (1+\|G\|_{\mathbb{D}^{2,p(\beta+2)}}^{\beta+2}),
	\end{equation}
which follow by H\"older's inequality because there exist constants $c', c''\geq 0$ such that
\begin{equation*}
		|f'(x)|\leq c'(1+|x|^{\beta+1})\quad\mbox{and}\quad |f(x)|\leq c''(1+|x|^{\beta+2}).
	\end{equation*}
\end{proof}

\begin{corollary}
\label{cor:chain_rule_H}
Let $0<\alpha<\sfrac{1}{2}$ and $p>1$.
	\begin{enumerate}[label=\arabic*)]
		\item\label{cor:chain_1_H} Let $f$ be as in \ref{lem:chain_1} of \autoref{lem:chain_rule}. If $G\in\mathbb{D}^{1,p(\beta+2)}$, then
			\begin{equation*}
				\nabla^{\alpha}f(G) = f'(G)(\nabla^\alpha G).
			\end{equation*}
		\item\label{cor:chain_2_H} Let $f$ be as in \ref{lem:chain_2} of \autoref{lem:chain_rule}. If $G\in\mathbb{D}^{2,p(\beta+2)}$, then 	
			\begin{equation*}
				\nabla^{\alpha, \alpha}f(G) = f''(G)(\nabla^\alpha G)\otimes(\nabla^\alpha G) + f'(G)(\nabla^{\alpha,\alpha}G).
			\end{equation*}
	\end{enumerate}
\end{corollary}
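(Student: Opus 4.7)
The plan is to combine \autoref{lem:chain_rule} with the definitions $\nabla^\alpha = I_+^\alpha \circ D$ and $\nabla^{\alpha,\alpha} = I_{+,+}^{\alpha,\alpha} \circ D^2$, exploiting the fact that the scalar (i.e.\ $\omega$-dependent but $x$-independent) random variables $f'(G)$ and $f''(G)$ pull out of the Bochner fractional integrals $I_+^\alpha$ and $I_{+,+}^{\alpha,\alpha}$, which act only on the spatial variables.

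For part \ref{cor:chain_1_H}, the continuous embedding $\mathbb{D}^{1,p(\beta+2)}\hookrightarrow\mathbb{D}^{1,p(\beta+1)}$ allows the application of \autoref{lem:chain_rule}\ref{lem:chain_1}, yielding $Df(G)=f'(G)DG$ in $L^p(\Omega;L^2(\mathbb{R}))$. Applying $I_+^\alpha$ to both sides (interpreted as a Bochner integral in the $L^p(\Omega)$-valued setting) and pulling $f'(G)$ out of the integral in the spatial variable gives
\begin{equation*}
(\nabla^\alpha f(G))(x) = \frac{1}{\Gamma(\alpha)}\int_{-\infty}^x f'(G)D_y G\,(x-y)^{\alpha-1}\,\mathrm{d}{y} = f'(G)(\nabla^\alpha G)(x),
\end{equation*}
as claimed.

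For part \ref{cor:chain_2_H}, \autoref{lem:chain_rule}\ref{lem:chain_2} yields $D^2 f(G) = f''(G)\,DG\otimes DG + f'(G)D^2 G$ in $L^p(\Omega;L^2(\mathbb{R}^2))$. Apply $I_{+,+}^{\alpha,\alpha}$ to both sides and observe that this operator factors (by Fubini applied to the defining double Bochner integral) as the composition of two one-dimensional operators $I_+^\alpha$, one in each spatial variable. In particular, on a product $h_1(u)h_2(v)$ one has $I_{+,+}^{\alpha,\alpha}(h_1\otimes h_2)=(I_+^\alpha h_1)\otimes(I_+^\alpha h_2)$. Pulling the scalar random variables $f''(G)$ and $f'(G)$ out of the spatial integrals produces precisely
\begin{equation*}
\nabla^{\alpha,\alpha}f(G) = f''(G)(\nabla^\alpha G)\otimes(\nabla^\alpha G) + f'(G)(\nabla^{\alpha,\alpha}G).
\end{equation*}

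The only delicate point is checking that the ingredients live in spaces where the Bochner integrals are well defined and the pullout is legitimate. This is handled by the integrability estimates \eqref{eq:estimates_on_norms_of_f} from the proof of \autoref{lem:chain_rule}, together with H\"older's inequality, which guarantee that the products $f'(G)DG$ and $f''(G)DG\otimes DG$ belong to the required $L^p(\Omega;L^2(\mathbb{R}^k))$; no convergence arguments beyond those already used to extend \autoref{lem:chain_rule} from smooth data are needed.
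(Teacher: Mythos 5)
Your proposal is correct and follows essentially the same route as the paper: apply \autoref{lem:chain_rule}, then compose with $I_+^\alpha$ (resp.\ $I_{+,+}^{\alpha,\alpha}$), pulling the scalar random variables out of the spatial Bochner integrals and using the factorization $I_{+,+}^{\alpha,\alpha}(h_1\otimes h_2)=(I_+^\alpha h_1)\otimes(I_+^\alpha h_2)$, which is exactly the fact the paper cites. The extra detail you give on integrability via \eqref{eq:estimates_on_norms_of_f} is consistent with, and merely expands on, the paper's one-line argument.
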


\begin{proof}
The proof of \ref{cor:chain_1_H} follows directly from \ref{lem:chain_1} of \autoref{lem:chain_rule}. The proof of \ref{cor:chain_2_H} follows from \ref{lem:chain_2} of \autoref{lem:chain_rule} using the fact that for $f(u,v)=f_1(u) f_2(v)$, the double integral $I_{+,+}^{\alpha,\alpha}(f)(u,v)$ equals $ I_+^\alpha(f_1)(u) I_+^\alpha (f_2)(v)$.
\end{proof}

The following lemma relates the Skorokhod integrals with respect to the fractional Brownian motion $B^H$ and the Rosenblatt process $R^H$ to the fractional stochastic derivatives $\nabla^{H-\frac{1}{2}}$ and $\nabla^{\frac{H}{2},\frac{H}{2}}$, respectively.

\begin{lemma} Let $H\in (\sfrac{1}{2},1)$ and $M\subset\mathbb{R}$ be an interval.
\label{lem:adjoint_property}
	\begin{enumerate}[label=\arabic*)]
		\item\label{lem:adjoint_property_B} If $g\in L^\frac{1}{H}(M;\mathbb{D}^{1,2})$ and $G\in\mathbb{D}^{1,2}$, then the following equality is satisfied:
			\begin{equation*}
				\mathbb{E}\left[ G\int_Mg_u\delta B_u^H\right]= c_H^{B}\int_M\mathbb{E}\left[(\nabla^{H-\frac{1}{2}}G)(u)g_u\right]\,\mathrm{d}{u}.
			\end{equation*}
		\item\label{lem:adjoint_property_R} If $g\in L^\frac{1}{H}(M;\mathbb{D}^{2,2})$ and $G\in\mathbb{D}^{2,2}$, then the following equality is satisfied:
			\begin{equation*}
				\mathbb{E}\left[ G\int_Mg_u\delta R_u^H\right]= c_H^R\int_M\mathbb{E}\left[ (\nabla^{\frac{H}{2},\frac{H}{2}}G)(u,u)g_u\right]\,\mathrm{d}{u}.
			\end{equation*}
	\end{enumerate}
\end{lemma}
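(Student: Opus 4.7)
My plan is to derive both identities as a consequence of the duality formula \eqref{eq:duality_formula} coupled with a standard fractional-integration-by-parts identity for the Riemann--Liouville operators. The hypotheses on $g$ ensure (via \autoref{prop:boundedness_of_int_B} and \autoref{prop:boundedness_of_int_R}, respectively) that the Skorokhod integrals exist in $L^2(\Omega)$, so all pairings below are well defined.

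For part \ref{lem:adjoint_property_B}, unfold the definition of $\int_M g_u\delta B_u^H$ and apply \eqref{eq:duality_formula} with $k=1$:
\begin{equation*}
  \mathbb{E}\!\left[G\int_M g_u\delta B_u^H\right]
  = c_H^B\,\mathbb{E}\!\left[G\,\delta\bigl(I_{-}^{H-\frac12}(\mathbf{1}_M g)\bigr)\right]
  = c_H^B\,\bigl\langle I_{-}^{H-\frac12}(\mathbf{1}_M g),\,DG\bigr\rangle_{L^2(\Omega;L^2(\mathbb{R}))}.
\end{equation*}
Now I would invoke the standard adjoint relation for the Riemann--Liouville integrals,
$\int_\mathbb{R}(I_{-}^{\alpha}f)(x)\,h(x)\,\mathrm{d}x = \int_\mathbb{R} f(x)(I_{+}^{\alpha}h)(x)\,\mathrm{d}x$
(\cite[formula (5.16)]{SamKilMar93}), applied pointwise in $\omega$ to $f=\mathbf{1}_M g$ and $h=DG$. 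The resulting integrand becomes $\mathbf{1}_M(u)\,g_u\,(I_{+}^{H-\frac12}DG)(u)=\mathbf{1}_M(u)\,g_u\,(\nabla^{H-\frac12}G)(u)$, and Fubini (justified by the integrability from $g\in L^{1/H}(M;\mathbb{D}^{1,2})$ and $\nabla^{H-\frac12}G\in L^{1/(1-H)}(\mathbb{R};L^2(\Omega))$ via \cite[Theorem 5.3]{SamKilMar93}) gives the claim.

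For part \ref{lem:adjoint_property_R}, the same strategy applies with $\delta^2$ in place of $\delta$:
\begin{equation*}
  \mathbb{E}\!\left[G\int_M g_u\delta R_u^H\right]
  = c_H^R\,\bigl\langle I_{-,\mathrm{tr}}^{\frac H2,\frac H2}(\mathbf{1}_M g),\,D^2G\bigr\rangle_{L^2(\Omega;L^2(\mathbb{R}^2))}.
\end{equation*}
The crucial computation is the two-dimensional analogue of the adjoint identity. Writing out the left-hand pairing, changing the order of integration to put the $u$-integral outermost, and collapsing the $(x,y)$-integration bounds $\{x\vee y\le u\}=\{x\le u\}\cap\{y\le u\}$, one obtains
\begin{equation*}
  \int_{\mathbb{R}^2}\!\!(I_{-,\mathrm{tr}}^{\frac H2,\frac H2}f)(x,y)\,h(x,y)\,\mathrm{d}x\,\mathrm{d}y
  = \int_{\mathbb{R}}\! f(u)\,(I_{+,+}^{\frac H2,\frac H2}h)(u,u)\,\mathrm{d}u,
\end{equation*}
i.e.\ the adjoint of $I_{-,\mathrm{tr}}^{\frac H2,\frac H2}$ is precisely the diagonal evaluation of $I_{+,+}^{\frac H2,\frac H2}$. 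Applying this with $f=\mathbf{1}_M g$ and $h=D^2G$ yields $\mathbf{1}_M(u)\,g_u\,(\nabla^{\frac H2,\frac H2}G)(u,u)$, and Fubini delivers the stated equality.

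The only point that requires care is the justification of Fubini in the two-dimensional case, since $(\nabla^{H/2,H/2}G)(u,u)$ is a \emph{diagonal} trace rather than a genuine $L^2(\mathbb{R}^2)$-function. I would handle this by first taking $G\in\mathcal{P}$ and $g$ a $\mathcal{P}$-valued step function, where everything is a finite combination of smooth functions and the manipulations are trivially legal, and then passing to the limit using the continuous extensions of $\nabla^{H/2,H/2}\colon\mathbb{D}^{2,2}\to L^{2/(1-H)}(\mathbb{R}^2;L^2(\Omega))$ and the boundedness of the Skorokhod integral from \autoref{prop:boundedness_of_int_R}; both sides are continuous in $(G,g)$ in the required topologies, so the density of $\mathcal{P}$ closes the argument.
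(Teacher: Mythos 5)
Your proof is correct and follows essentially the same route as the paper: the duality formula \eqref{eq:duality_formula}, the one-dimensional fractional integration by parts for part \ref{lem:adjoint_property_B}, and for part \ref{lem:adjoint_property_R} the observation that interchanging the order of integration turns $I_{-,\mathrm{tr}}^{\frac H2,\frac H2}$ into the diagonal evaluation of $I_{+,+}^{\frac H2,\frac H2}$. The only cosmetic difference is that the paper justifies the Fubini step directly by an absolute-integrability estimate (H\"older's inequality together with \autoref{lem:I_tr_bounded}, giving a bound by $\|G\|_{\mathbb{D}^{2,2}}\|g\|_{L^{1/H}(M;L^2(\Omega))}$) rather than by your density-and-continuity argument, but both rest on the same bound.
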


\begin{proof}
The adjoint property in \ref{lem:adjoint_property_B} follows directly from the duality formula \eqref{eq:duality_formula} and the integration by parts formula for fractional integrals $I_{-}^\alpha$ and $I_{+}^\alpha$, see \cite[formula (5.16) on p. 96]{SamKilMar93}. \cite[Proposition 18]{Arr16} provides the proof. To prove \ref{lem:adjoint_property_R}, use the duality formula \eqref{eq:duality_formula} and interchange the order of the integrals. Thus
	\begin{align*}
		\mathbb{E}\left[ G\int_Mg_s\delta R_s^H\right] & = c_H^R\left\langle G;\delta ^2\left(I_{-,\mathrm{tr}}^{\frac{H}{2},\frac{H}{2}}(\bm{1}_Mg)\right)\right\rangle_{L^2(\Omega)}\\
		& = c_H^R\left\langle D^2G;I_{-,\mathrm{tr}}^{\frac{H}{2},\frac{H}{2}}(\bm{1}_Mg)\right\rangle_{L^2(\mathbb{R}^2;L^2(\Omega))}\\
		& = \frac{c_H^R}{\Gamma\left(\frac{H}{2}\right)^2}\int_{\mathbb{R}^2}\mathbb{E}\left[D^2_{x,y}G\int_{x\vee y}^\infty\bm{1}_M(u)g_u(u-x)^{\frac{H}{2}-1}(u-y)^{\frac{H}{2}-1}\,\mathrm{d}{u}\right]\,\mathrm{d}{x}\,\mathrm{d}{y}\\
		& = \frac{c_H^R}{\Gamma\left(\frac{H}{2}\right)^2}\int_M \mathbb{E} \left[g_u\int_{-\infty}^u\int_{-\infty}^uD_{x,y}^2G(u-x)^{\frac{H}{2}-1}(u-y)^{\frac{H}{2}-1}\,\mathrm{d}{x}\,\mathrm{d}{y}\right]\,\mathrm{d}{u}\\
		& = c_H^R\int_M\mathbb{E}\left[ (\nabla^{\frac{H}{2},\frac{H}{2}}G)(u,u)g_u\right]\,\mathrm{d}{u}.
	\end{align*}
The use of Fubini's theorem follows by the estimate
	\begin{align*}
	\frac{c_H^R}{\Gamma\left(\frac{H}{2}\right)^2}\int_{\mathbb{R}^2}\mathbb{E}\left[|D^2_{x,y}G|\int_{x\vee y}^\infty\bm{1}_M(u)|g_u|(u-x)^{\frac{H}{2}-1}(u-y)^{\frac{H}{2}-1}\,\mathrm{d}{u}\right]\,\mathrm{d}{x}\,\mathrm{d}{y} & \lesssim \\
	& \hspace{-4cm} \lesssim \|D^2G\|_{L^2(\mathbb{R}^2;L^2(\Omega))}\|I_{-,\mathrm{tr}}^{\frac{H}{2},\frac{H}{2}}(\bm{1}_Mg)\|_{L^2(\mathbb{R}^2;L^2(\Omega))}\\
	& \hspace{-4cm} \lesssim\|G\|_{\mathbb{D}^{2,2}}\|g\|_{L^\frac{1}{H}(M;L^2(\Omega))}
	\end{align*}
which follows by H\"older's inequality and \cite[Proposition 2.5.5]{NouPec12} with \autoref{lem:I_tr_bounded}.
\end{proof}

This section ends with a Fubini theorem for the Skorokhod integral with respect to Rosenblatt processes.

\begin{lemma}
\label{lem:fubini}
Let $H\in (\sfrac{1}{2},1)$ and let $(E,\mu)$ be a measurable space equipped with a finite positive measure $\mu$. Let further $g:[0,T]\times E\rightarrow \mathbb{D}^{2,2}$ be a random field such that
	\begin{equation*}
		\int_E\left(\int_0^T\|g(s,x)\|_{\mathbb{D}^{2,2}}^\frac{1}{H}\,\mathrm{d}{s}\right)^{2H}\mu(\mathrm{d}{x}) <\infty.
	\end{equation*}
Then the equality
	\begin{equation*}
		\int_E\left(\int_0^T g(s,x)\delta R_s^H\right)\mu(\mathrm{d}{x}) = \int_0^T\left(\int_Eg(s,x)\mu(\mathrm{d}{x})\right)\delta R_s^H
	\end{equation*}
is satisfied almost surely.
\end{lemma}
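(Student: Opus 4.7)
The plan is to pair both sides of the claimed identity with an arbitrary test variable $G$ from the polynomial class $\mathcal{P}$ and reduce the problem, via the duality identity in \ref{lem:adjoint_property_R} of \autoref{lem:adjoint_property}, to a purely deterministic Fubini swap between $\mathrm{d}s$ and $\mu(\mathrm{d}x)$. Density of $\mathcal{P}$ in $L^2(\Omega)$ then delivers almost-sure equality of the two $L^2(\Omega)$ random variables.

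Well-definedness is the first item. For $\mu$-a.e.\ $x\in E$, \autoref{prop:boundedness_of_int_R} with $k=p=2$ gives
$$\left\|\int_0^T g(s,x)\,\delta R_s^H\right\|_{L^2(\Omega)} \lesssim \left(\int_0^T \|g(s,x)\|_{\mathbb{D}^{2,2}}^{\frac{1}{H}}\,\mathrm{d}s\right)^{H},$$
and Cauchy--Schwarz on the finite measure space $(E,\mu)$ combined with the hypothesis places this bound in $L^1(E,\mu)$, so the outer Bochner integral on the left of the claim converges in $L^2(\Omega)$. Symmetrically, two successive applications of Minkowski's integral inequality (first moving $\|\cdot\|_{\mathbb{D}^{2,2}}$ inside the $\mu$-integral, then moving $\mu(\mathrm{d}x)$ outside the $L^{\frac{1}{H}}([0,T])$-norm), followed once more by Cauchy--Schwarz on $(E,\mu)$, place $\int_E g(\cdot,x)\,\mu(\mathrm{d}x)$ in $L^{\frac{1}{H}}([0,T];\mathbb{D}^{2,2})$, so \autoref{prop:boundedness_of_int_R} also makes the right-hand side a well-defined element of $L^2(\Omega)$.

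Now fix $G\in\mathcal{P}$. The usual Bochner--Fubini argument (valid by the bounds just described) brings $\mathbb{E}[G\,\cdot]$ inside the $\mu$-integral, and \ref{lem:adjoint_property_R} of \autoref{lem:adjoint_property} applied pointwise in $x$ yields
$$\mathbb{E}\left[G\int_E\left(\int_0^T g(s,x)\,\delta R_s^H\right)\mu(\mathrm{d}x)\right] = c_H^R\int_E\int_0^T \mathbb{E}\left[(\nabla^{\frac{H}{2},\frac{H}{2}}G)(s,s)\,g(s,x)\right]\mathrm{d}s\,\mu(\mathrm{d}x).$$
Classical Fubini on the iterated integral over $(E,\mu)\times([0,T],\mathrm{d}s)$, followed by a second Bochner--Fubini to pull the $\mu$-integral inside the expectation, and a second application of \ref{lem:adjoint_property_R} of \autoref{lem:adjoint_property} to the $\mu$-averaged integrand, identifies this expression with $\mathbb{E}[G\int_0^T(\int_E g(s,x)\,\mu(\mathrm{d}x))\,\delta R_s^H]$. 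Since $\mathcal{P}$ is dense in $L^2(\Omega)$, the two random variables coincide almost surely.

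The main obstacle is justifying the classical Fubini swap between $\mathrm{d}s$ and $\mu(\mathrm{d}x)$: one must show that $(s,x)\mapsto\mathbb{E}[(\nabla^{\frac{H}{2},\frac{H}{2}}G)(s,s)\,g(s,x)]$ belongs to $L^1([0,T]\times E,\mathrm{d}s\otimes\mu)$. Cauchy--Schwarz bounds the absolute value by $\|(\nabla^{\frac{H}{2},\frac{H}{2}}G)(s,s)\|_{L^2(\Omega)}\|g(s,x)\|_{L^2(\Omega)}$; for $G\in\mathcal{P}$ the first factor is bounded uniformly in $s\in[0,T]$ by the explicit polynomial structure of $G$ in finitely many Wiener integrals, and the second factor is $\mathrm{d}s\otimes\mu$-integrable by the hypothesis together with H\"older's inequality and $\mu(E)<\infty$, so both interchanges are legitimate and the argument closes.
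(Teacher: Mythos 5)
Your route is genuinely different from the paper's. The paper disposes of this lemma in one line: it writes the integral as $c_H^R\,\delta^2(I_{-,\mathrm{tr}}^{\frac{H}{2},\frac{H}{2}}(\bm{1}_{[0,T]}g))$, commutes the deterministic transfer operator with the $\mu$-integral, and then applies the known Fubini theorem for the first-order Skorokhod integral $\delta$ twice, using the Nualart--Zakai identification of $\delta^2$ as an iterated $\delta$. You instead test both sides against $G\in\mathcal{P}$, use the duality identity of \ref{lem:adjoint_property_R} of \autoref{lem:adjoint_property} to reduce everything to a scalar Fubini swap on $[0,T]\times E$, and close by density of $\mathcal{P}$ in $L^2(\Omega)$. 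Your proof is self-contained within the paper --- it uses only \autoref{prop:boundedness_of_int_R} and \autoref{lem:adjoint_property}, both already established --- whereas the paper outsources to two external references; the price is that you must carry out the well-definedness and integrability bookkeeping explicitly, which you do correctly (the Minkowski and Cauchy--Schwarz steps for the right-hand side are exactly what is needed, and the exponent $2H$ in the hypothesis is what makes the Cauchy--Schwarz step on the finite measure space $(E,\mu)$ work).

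One justification needs repair. You claim that $s\mapsto\|(\nabla^{\frac{H}{2},\frac{H}{2}}G)(s,s)\|_{L^2(\Omega)}$ is uniformly bounded on $[0,T]$ for $G\in\mathcal{P}$ ``by the explicit polynomial structure of $G$''. This is false in general: writing $G=p(I(\xi_1),\dots,I(\xi_n))$ gives $(\nabla^{\frac{H}{2},\frac{H}{2}}G)(s,s)=\sum_{i,j}\partial_i\partial_j p(\cdots)\,(I_+^{\frac{H}{2}}\xi_i)(s)\,(I_+^{\frac{H}{2}}\xi_j)(s)$, and for $\xi\in L^2(\mathbb{R})$ the fractional integral $I_+^{\frac{H}{2}}\xi$ is only guaranteed to lie in $L^{\frac{2}{1-H}}(\mathbb{R})$; the kernel $(s-y)^{\frac{H}{2}-1}$ is not locally square integrable, so pointwise boundedness genuinely fails for suitable $\xi\in L^2(\mathbb{R})$. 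The step is salvageable in two ways: either observe that the displayed expression puts $\|(\nabla^{\frac{H}{2},\frac{H}{2}}G)(s,s)\|_{L^2(\Omega)}$ in $L^{\frac{1}{1-H}}(0,T)$ and pair it with $\|g(\cdot,x)\|_{L^2(\Omega)}\in L^{\frac{1}{H}}(0,T)$ via H\"older's inequality with the conjugate exponents $\frac{1}{1-H}$ and $\frac{1}{H}$, which yields the joint $\mathrm{d}s\otimes\mu$-integrability required for Fubini; or restrict the test class to polynomials in $I(\xi)$ with $\xi$ smooth and compactly supported, which is still dense in $L^2(\Omega)$ and for which the uniform bound does hold. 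With either fix the argument is complete.
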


\begin{proof}
By \cite[Proposition 2.6]{NuaZak87}, the Fubini-type theorem for the Skorokhod integral $\delta$ \cite[Lemma 2.10]{NuaLeo98} can be used iteratively.
\end{proof}

\section{A general It\^o formula}
\label{sec:Ito_formulas}

In this section, a generalized It\^o-type formula for functionals of processes with second-order fractional differentials is given. Moreover, several results useful for its applications are given here as well. Let $H\in (\sfrac{1}{2},1)$ and $T>0$ be fixed for the remainder of the paper.

\begin{proposition}
\label{prop:Ito_formula}
Let $f\in\mathscr{C}^2([0,T]\times\mathbb{R})$ be such that for every $t\in [0,T]$, the function $f(t,\cdot)$ belongs to $\mathscr{C}^3(\mathbb{R})$ and its third derivative has at most polynomial growth, i.e. there exist constants $C_t\geq 0$ and $\alpha\geq 0$ such that
	\begin{equation*}
		\left|\frac{\partial^3 f}{\partial x^3}(t,x)\right| \leq C_t(1+|x|^\alpha), \quad x\in\mathbb{R}.
	\end{equation*}
Let $x_0\in\mathbb{R}$ and $(\vartheta_t)_{t\in[0,T]}$, $(\varphi_t)_{t\in [0,T]}$, and $(\psi_t)_{t\in [0,T]}$ be stochastic processes satisfying the following:
	\begin{enumerate}[label=\upshape{(X\arabic*)}]
	\itemsep0em
		\item\label{ass:gen_1} $(\vartheta_t)$ belongs to the space $L^1(0,T;\mathbb{D}^{2,2(\alpha+2)})$.
		\item $(\varphi_t)$ belongs to the space $L^\frac{2}{1+H}(0,T;\mathbb{D}^{3,2(\alpha+2)})$ and satisfies the following conditions:
			\begin{enumerate}
				\itemsep0em
				\item For almost every $u\in[0,T]$, the following equality is satisfied
					\begin{equation*}
						\lim_{\varepsilon\downarrow0}\esssup_{v\in (u,u+\varepsilon)}\|(\nabla^\frac{H}{2}\varphi_u)(v) - (\nabla^\frac{H}{2}\varphi_u)(u)\|_{L^4(\Omega)} = 0.
					\end{equation*}
 				\item There exists a function $p_1\in L^1(0,T)$ such that, for almost every $u\in [0,T]$, the estimate
					\begin{equation*}
						\|\nabla^\frac{H}{2}\varphi_u(v)\|_{L^4(\Omega)}\leq p_1(u)
					\end{equation*}
				is satisfied for almost every $v\in [u,T]$.
			\end{enumerate}
		\item\label{ass:gen_3} $(\psi_t)$ belongs to the space $L^\frac{1}{H}(0,T;\mathbb{D}^{4,4(\alpha+2)})$ and satisfies the following conditions:
			\begin{enumerate}
			\itemsep0em
				\item For almost every $u\in [0,T]$, it holds that
					\begin{align*}
						& \lim_{\varepsilon\downarrow 0}\esssup_{v\in (u,u+\varepsilon)}\|(\nabla^\frac{H}{2}\psi_u)(v)-(\nabla^\frac{H}{2}\psi_u)(u)\|_{\mathbb{D}^{1,8}}=0,\\
						& \lim_{\varepsilon\downarrow 0}\esssup_{v\in (u,u+\varepsilon)}\|(\nabla^{\frac{H}{2},\frac{H}{2}}\psi_u)(v,u) -(\nabla^{\frac{H}{2},\frac{H}{2}}\psi_u)(u,u)\|_{L^4(\Omega)} = 0,\\
						& \lim_{\varepsilon\downarrow 0}\esssup_{v\in (u,u+\varepsilon)}\|(\nabla^{\frac{H}{2},\frac{H}{2}}\psi_u)(v,v) -(\nabla^{\frac{H}{2},\frac{H}{2}}\psi_u)(u,u)\|_{L^4(\Omega)} = 0.
					\end{align*}
				\item There exist functions $p_2\in L^{\frac{2}{1+2H}}(0,T)$, $p_3\in L^1(0,T)$, and $p_4\in L^1(0,T)$ such that, for almost every $u\in [0,T]$, the inequalities
				\begin{align*}
					& \|(\nabla^\frac{H}{2}\psi_u)(v)\|_{\mathbb{D}^{1,8}}\leq p_2(u),\\
					& \|(\nabla^{\frac{H}{2},\frac{H}{2}}\psi_u)(u,v)\|_{L^4(\Omega)} \leq p_3(u),\\
					& \|(\nabla^{\frac{H}{2},\frac{H}{2}}\psi_u)(v,v)\|_{L^4(\Omega)} \leq p_4(u),
				\end{align*}
			are satisfied for almost every $v\in [u,T]$.
			\end{enumerate}		
	\end{enumerate}
Define the stochastic process $(x_t)_{t\in [0,T]}$ by
	\begin{equation}
	\label{eq:fractional_differential}
		x_t \overset{\mathrm{Def.}}{=} x_0+\int_0^t\vartheta_s\,\mathrm{d}{s} + 2c_H^{B,R}\int_0^t\varphi_s\delta B_s^{\frac{H}{2}+\frac{1}{2}}+\int_0^t\psi_s\delta{R}^H_s
	\end{equation}
and assume that it is H\"older continuous of an order greater than $\sfrac{1}{2}$. Furthermore assume  the following:
	\begin{enumerate}[label=\upshape{(X\arabic*)}]
	\setcounter{enumi}{3}
		\item\label{ass:gen_4} The following finiteness conditions are satisfied
			\begin{equation*}
				 \esssup_{s\in [0,T]} \left\|\frac{\partial f}{\partial x}(s,x_s)\right\|_{\mathbb{D}^{2,4}} <\infty, \quad \esssup_{s\in [0,T]} \left\|\frac{\partial^2 f}{\partial x^2}(s,x_s)\right\|_{\mathbb{D}^{1,8}} <\infty, \quad \esssup_{s\in [0,T]} \left\|\frac{\partial^3 f}{\partial x^3}(s,x_s)\right\|_{L^8(\Omega)} <\infty.
			\end{equation*}
		\item\label{ass:gen_5} For almost every $v\in [0,T]$, the following equalities are satisfied
			\begin{align*}
				& \lim_{\varepsilon\downarrow 0}\esssup_{u\in (v-\varepsilon,v)} \|(\nabla^{\frac{H}{2}}X_u)(v) - (\nabla^\frac{H}{2}X_v)(v)\|_{\mathbb{D}^{1,8}}=0,\\
				&\lim_{\varepsilon\downarrow 0}\esssup_{u\in (v-\varepsilon,v)} \|(\nabla^{\frac{H}{2}, \frac{H}{2}}X_u)(v,v) - (\nabla^\frac{H}{2}X_v)(v,v)\|_{L^8(\Omega)}=0.\\
			\end{align*}
		\item\label{ass:gen_6} For almost every $s\in [0,T]$ and almost every $v\in [0,T]$, the following equalities are satisfied
			\begin{align*}
				& \lim_{\varepsilon\downarrow 0} \esssup_{u\in (v,v+\varepsilon)}\|(\nabla^\frac{H}{2}X_s)(u) - (\nabla^\frac{H}{2}X_s)(v)\|_{\mathbb{D}^{1,8}} = 0,\\
				& \lim_{\varepsilon\downarrow 0} \esssup_{u\in (v,v+\varepsilon)}\|(\nabla^{\frac{H}{2},\frac{H}{2}}X_s)(u,u) - (\nabla^{\frac{H}{2},\frac{H}{2}}X_s)(v,v)\|_{L^{8}(\Omega)} = 0.
			\end{align*}
		\item\label{ass:gen_7} The following finiteness conditions are satisfied
			\begin{equation*}
				\esssup_{s,u\in [0,T]} \|(\nabla^\frac{H}{2}X_s)(u)\|_{\mathbb{D}^{1,8}}<\infty \quad \mbox{and} \quad \esssup_{s,u\in [0,T]}\|(\nabla^{\frac{H}{2},\frac{H}{2}}X_s(u,u)\|_{L^8(\Omega)}<\infty.
			\end{equation*}
	\end{enumerate}
Then for the process $(y_t)_{t\in [0,T]}$ defined by $y_t=f(t,x_t)$ the equality 
	\begin{equation}
	\label{eq:Ito_formula_full}
		y_t = y_0 + \int_0^t\tilde{\vartheta}_s\,\mathrm{d}{s} + 2c_H^{B,R}\int_0^t\tilde{\varphi}_s\delta B_s^{\frac{H}{2}+\frac{1}{2}} + \int_0^t\tilde{\psi}_s\delta R^H_s
	\end{equation}
is satisfied with
	\begin{align*}
		\tilde{\vartheta}_s & = \frac{\partial f}{\partial s}(s,x_s) + \frac{\partial f}{\partial x}(s,x_s)\vartheta_s  \\
		& \hspace{2cm} +\,\, 2c_H^R\frac{\partial^2 f}{\partial x^2}(s,x_s)(\nabla^\frac{H}{2}x_s)(s)\varphi_s\\
			& \hspace{4cm} + \,\, c_H^R \frac{\partial^2f}{\partial x^2}(s,x_s) (\nabla^{\frac{H}{2},\frac{H}{2}}x_s)(s,s)\psi_s \\
			& \hspace{6cm} + \,\, c_H^R\frac{\partial^3f}{\partial x^3}(s,x_s) [(\nabla^\frac{H}{2}x_s)(s)]^2\psi_s,\\
			\tilde{\varphi}_s & = \frac{\partial f}{\partial x}(s,x_s)\varphi_s + \frac{\partial^2 f}{\partial x^2}(s,x_s)(\nabla^\frac{H}{2}x_s)(s)\psi_s,\\
			\tilde{\psi}_s & = \frac{\partial f}{\partial x}(s,x_s)\psi_s.
	\end{align*}
for every $t\in [0,T]$ almost surely.
\end{proposition}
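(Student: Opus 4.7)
The strategy is the two-step scheme outlined in the introduction: rewrite $x$ as a forward-integral process, Taylor-expand $f(t,x_t)$ along a fine partition, and then convert the resulting forward integrals back to Skorokhod form. First, Propositions~\ref{prop:relationship_for_Skor_B} and \ref{prop:relationship_for_Skor_R} (whose hypotheses on $\varphi$ and $\psi$ are supplied by \ref{ass:gen_1}--\ref{ass:gen_3}) give
\[
x_t = x_0 + \int_0^t \bar{\vartheta}_s\,\mathrm{d}s + 2c_H^{B,R}\int_0^t \varphi_s\,\mathrm{d}^-B_s^{\frac{H}{2}+\frac{1}{2}} + \int_0^t \psi_s\,\mathrm{d}^- R_s^H,
\]
where $\bar{\vartheta}$ is $\vartheta$ augmented by the explicit correction terms from \eqref{eq:relationship_for_Skor_B} and \eqref{eq:relationship_for_Skor_R} applied to $\varphi$ and $\psi$.

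Next, for a partition $0=t_0<\cdots<t_n=t$ of mesh tending to zero, I would telescope $f(t,x_t)-f(0,x_0)=\sum_k (f(t_{k+1},x_{t_{k+1}})-f(t_k,x_{t_k}))$ and expand each summand to first order in time and third order in space, with derivatives at $(t_k,x_{t_k})$ and a polynomial-growth remainder controlled by $(\Delta x_k)^3$ times a modulus of continuity. The expansion must go to order three because, unlike fBm, the second-order Wiener--Itô structure of $R^H$ makes $(\Delta R^H)^3$ contribute a nontrivial diagonal term. Since forward integrals commute with random prefactors, $\sum_k \partial_x f(t_k,x_{t_k})\,\Delta x_k\to\int_0^t \partial_x f(s,x_s)\,\mathrm{d}^- x_s$. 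The Hölder continuity of $x$ of order greater than $\tfrac{1}{2}$ kills the pure drift and pure $B^{\frac{H}{2}+\frac{1}{2}}$ contributions in the quadratic and cubic sums, while the regularity assumptions \ref{ass:gen_5}--\ref{ass:gen_7} together with dominated convergence identify the surviving Rosenblatt-diagonal contributions as Lebesgue integrals of $\partial_x^2 f(s,x_s)(\nabla^{\frac{H}{2},\frac{H}{2}}x_s)(s,s)\psi_s$, $\partial_x^2 f(s,x_s)(\nabla^{\frac{H}{2}}x_s)(s)\varphi_s$, and $\partial_x^3 f(s,x_s)[(\nabla^{\frac{H}{2}}x_s)(s)]^2\psi_s$, weighted by appropriate multiples of $c_H^R$.

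To conclude, I would apply Propositions~\ref{prop:relationship_for_Skor_B} and \ref{prop:relationship_for_Skor_R} in reverse to the forward integrals of the integrands $\partial_x f(s,x_s)\psi_s$ and $\partial_x f(s,x_s)\varphi_s$. Their fractional stochastic derivatives are computed via the product rule (Lemma~\ref{lem:product_rule}) and chain rule (Corollary~\ref{cor:chain_rule_H}); assumption \ref{ass:gen_4} together with Meyer's inequality ensures these derivatives have the integrability required by Propositions~\ref{prop:relationship_for_Skor_B} and \ref{prop:relationship_for_Skor_R}. The new correction integrals produced in this reverse conversion combine with the diagonal Lebesgue integrals from the forward representation of $x$ and from the quadratic/cubic limits above to collapse exactly into the claimed $\tilde{\vartheta}$, while the Skorokhod integrands read off as $\tilde{\varphi}$ and $\tilde{\psi}$.

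The main obstacle is the middle step: rigorously identifying the limits of the quadratic and cubic Riemann sums. Cross terms mixing $\mathrm{d}s$, $\mathrm{d}^-B^{\frac{H}{2}+\frac{1}{2}}$, and $\mathrm{d}^-R^H$ in $(\Delta x_k)^2$ and $(\Delta x_k)^3$ must be shown to vanish using the strict Hölder regularity of $x$ beyond $\tfrac{1}{2}$, while the surviving diagonal contributions from the Rosenblatt part require fine control of $\nabla^{\frac{H}{2}}x_s$ and $\nabla^{\frac{H}{2},\frac{H}{2}}x_s$ on and near the diagonal --- this is precisely what the elaborate conditions \ref{ass:gen_5}--\ref{ass:gen_7} are engineered to supply. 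The duality in Lemma~\ref{lem:adjoint_property} and the Fubini property in Lemma~\ref{lem:fubini} are needed to identify the limits of expectations, and transporting the Sobolev--Watanabe estimates through the nonlinearity $f$ relies on \ref{ass:gen_4} together with the polynomial-growth hypothesis on $\partial_x^3 f$. This is the genuinely non-Gaussian part of the argument and is where the analysis departs from the fBm chain rule of \cite{BiaOks08}.
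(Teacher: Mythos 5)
There is a genuine error in the central step of your outline. You attribute the second- and third-derivative terms in $\tilde{\vartheta}$ to ``surviving Rosenblatt-diagonal contributions'' of the quadratic and cubic Riemann sums $\sum_k \frac{\partial^2 f}{\partial x^2}(t_k,x_{t_k})(\Delta x_k)^2$ and $\sum_k \frac{\partial^3 f}{\partial x^3}(t_k,x_{t_k})(\Delta x_k)^3$, on the grounds that the second-order Wiener--It\^o structure of $R^H$ makes $(\Delta R^H)^3$ contribute on the diagonal. This is not what happens: the standing hypothesis that $x$ is H\"older continuous of an order $\gamma>\sfrac{1}{2}$ forces \emph{all} of these sums to vanish pathwise (e.g. $\sum_k|\Delta x_k|^2\lesssim \max_k|\Delta t_k|^{2\gamma-1}\cdot t\to 0$, and a fortiori the cubic sums), and the non-Gaussianity of $R^H$ does not alter its path regularity. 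Accordingly, the paper's proof Taylor-expands only to \emph{first} order in space (see \eqref{eq:Ito_4}), kills the quadratic remainder $\frac{1}{\varepsilon}\int_0^t R_3(s,\varepsilon)(x_{s+\varepsilon}-x_s)^2\,\mathrm{d}{s}$ precisely by this H\"older argument, and obtains every occurrence of $\frac{\partial^2 f}{\partial x^2}$ and $\frac{\partial^3 f}{\partial x^3}$ from a different source: the trace corrections that appear when the forward integrals $\int_s^{s+\varepsilon}\frac{\partial f}{\partial x}(s,x_s)\psi_u\,\mathrm{d}^{-}R_u^H$ and $\int_s^{s+\varepsilon}\frac{\partial f}{\partial x}(s,x_s)[\varphi_u-(\nabla^{\frac{H}{2}}\psi_u)(u)]\,\mathrm{d}^{-}B_u^{\frac{H}{2}+\frac{1}{2}}$ are converted back to Skorokhod form via \autoref{prop:relationship_for_Skor_B} and \autoref{prop:relationship_for_Skor_R}, combined with the product and chain rules of \autoref{lem:product_rule} and \autoref{cor:chain_rule_H} applied to $\nabla^{\frac{H}{2}}\big(\frac{\partial f}{\partial x}(s,x_s)\psi_u\big)$ and $\nabla^{\frac{H}{2},\frac{H}{2}}\big(\frac{\partial f}{\partial x}(s,x_s)\psi_u\big)$. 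Since your outline also generates these same correction terms in its final ``reverse conversion'' step, carrying it out as written would either double-count them or, once you discover that the quadratic and cubic sums in fact vanish, leave you without the claimed source for the $f''$ and $f'''$ terms.

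A second, more structural problem: you propose to first rewrite $x$ globally as a forward-integral process and then form $\int_0^t\frac{\partial f}{\partial x}(s,x_s)\,\mathrm{d}^{-}x_s$ as a limit of partition sums before redistributing the integrand onto $R^H$ and $B^{\frac{H}{2}+\frac{1}{2}}$. The required associativity identity \eqref{eq:interchange_formula} (equivalently \eqref{eq:forward_transition}) is exactly the step the authors single out as unjustified in general, cf. the remark following the proof and \cite[Remark 5.9]{Zahle99}. The paper's proof avoids it by never assembling the forward integral of $x$: it works inside the Russo--Vallois regularization $\frac{1}{\varepsilon}\int_0^t[f(s+\varepsilon,x_{s+\varepsilon})-f(s,x_s)]\,\mathrm{d}{s}$, decomposes the single increment $x_{s+\varepsilon}-x_s$, and proves the $L^2(\Omega)$-convergence of the resulting $\varepsilon$-approximants directly using \autoref{lem:fubini}, \autoref{prop:boundedness_of_int_R}, the Lebesgue differentiation theorem, and dominated convergence under \ref{ass:gen_4}--\ref{ass:gen_7}. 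Note also that the forward integral here is defined by $\varepsilon$-regularization (\autoref{def:forward_integral}), not as a limit of Riemann sums along partitions, so your telescoping scheme would additionally have to be reconciled with that definition.
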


\begin{proof}
Let $t\in[0,T]$. Initially, the proof of the It\^o formula for the forward integral in \cite[Theorem 1.2]{RusVal95} is used. Since the process $(x_t)$ has continuous sample paths, the equality
	\begin{equation*}
		f(t,x_t)-f(0,x_0) = \lim_{\varepsilon\downarrow 0} \frac{1}{\varepsilon}\int_0^t\left[f(s+\varepsilon,x_{s+\varepsilon}) - f(s,x_s)\right]\,\mathrm{d}{s} \overset{\mathrm{Def.}}{=} \lim_{\varepsilon\downarrow 0} A_{t,\varepsilon}
	\end{equation*}
is satisfied almost surely. Using Taylor's formula to expand the integrand in $A_{t,\varepsilon}$ yields, for $\varepsilon>0$, that
	\begin{align*}
		A_{t,\varepsilon} & = \int_0^t\frac{\partial f}{\partial x}(s,x_s)\,\mathrm{d}{s}+ \frac{1}{\varepsilon}\int_0^t\frac{\partial f}{\partial x}(s,x_s)(x_{s+\varepsilon}-x_s)\,\mathrm{d}{s}\\
		& \hspace{1cm} + \varepsilon \int_0^tR_1(s,\varepsilon)\,\mathrm{d}{s} + \int_0^tR_2(s,\varepsilon)(x_{s+\varepsilon}-x_s)\,\mathrm{d}{s} + \frac{1}{\varepsilon}\int_0^tR_3(s,\varepsilon)(x_{s+\varepsilon}-x_s)^2\,\mathrm{d}{s}\numberthis\label{eq:Ito_4}
	\end{align*}
is satisfied almost surely. Here, the processes $R_1,R_2,R_3$ are remainders in the integral form. All the three terms containing these remainders tend to zero as $\varepsilon \downarrow 0$; the second by continuity of $x$, the third by the fact that sample paths of $x$ are H\"older continuous of an order greater than $\sfrac{1}{2}$.

Therefore focus on the second term of $A_{t,\varepsilon}$ that tends to the forward integral. An idea from \cite{BiaOks08} is used. By \autoref{prop:relationship_for_Skor_B} and \autoref{prop:relationship_for_Skor_R}, the Skorokhod-type integrals in $x$ can be written as forward integrals by adding the appropriate correction terms:
	\begin{align*}
		x_{s+\varepsilon} -x_s & =\int_{s}^{s+\varepsilon}\psi_u\,\mathrm{d}^{-}R_u^H + 2c_{H}^{B,R}\int_s^{s+\varepsilon} \left[\varphi_u-(\nabla^\frac{H}{2}\psi_u)(u)\right]\,\mathrm{d}^{-}{B}_u^{\frac{H}{2}+\frac{1}{2}}\\
		& \hspace{1.5cm} + \int_s^{s+\varepsilon}\left[\vartheta_u - 2c_H^R(\nabla^\frac{H}{2}\varphi_u)(u)+c_H^R(\nabla^{\frac{H}{2},\frac{H}{2}}\psi_u)(u,u)\right]\,\mathrm{d}{u}.
	\end{align*}
Forward integrals commute with random variables so that the equality
	\begin{align*}
		\frac{\partial f}{\partial x}(s,x_s)(x_{s+\varepsilon}-x_s) & = \int_s^{s+\varepsilon} \frac{\partial f}{\partial x}(s,x_s)\psi_u\,\mathrm{d}^{-}R_u^H \\
		& \hspace{1cm} + 2c_H^{B,R}\int_s^{s+\varepsilon}\frac{\partial f}{\partial x}(s,x_s)\left[\varphi_u-(\nabla^\frac{H}{2}\psi_u)(u)\right]\,\mathrm{d}^{-}{B}_u^{\frac{H}{2}+\frac{1}{2}}\\
		& \hspace{1cm} + \int_s^{s+\varepsilon} \frac{\partial f}{\partial x}(s,x_s)\left[\vartheta_u - 2c_H^R(\nabla^\frac{H}{2}\varphi_u)(u)+c_H^R(\nabla^{\frac{H}{2},\frac{H}{2}}\psi_u)(u,u)\right]\,\mathrm{d}{u}
	\end{align*}
is satisfied almost surely. Denote
	\begin{equation*}
		B_{s,\varepsilon} \overset{\mathrm{Def.}}{=} \frac{\partial f}{\partial x}(s,x_s)(x_{s+\varepsilon}-x_s).
	\end{equation*}
By \autoref{prop:relationship_for_Skor_B} and \autoref{prop:relationship_for_Skor_R}, the forward integrals above can be written as Skorokhod-type integrals and thus
		\begin{align*}
		B_{s,\varepsilon} & = \int_s^{s+\varepsilon} \frac{\partial f}{\partial x}(s,x_s)\psi_u\delta R_u^H \\
		& \hspace{1cm} + 2c_H^{B,R}\int_s^{s+\varepsilon}\left[\frac{\partial f}{\partial x}(s,x_s)\left(\varphi_u-(\nabla^\frac{H}{2}\psi_u)(u)\right)+\nabla^\frac{H}{2}\left(\frac{\partial f}{\partial x}(s,x_s)\psi_u\right)(u)\right]\delta{B}_u^{\frac{H}{2}+\frac{1}{2}}\\
		& \hspace{1cm} + \int_s^{s+\varepsilon}\bigg[\frac{\partial f}{\partial x}(s,x_s)\vartheta_u+ 2c_H^R\nabla^\frac{H}{2}\left(\frac{\partial f}{\partial x}(s,x_s)\varphi_u\right)(u)-2c_H^R\frac{\partial f}{\partial x}(s,x_s)(\nabla^\frac{H}{2}\varphi_u)(u) \\
		& \hspace{3cm} + c_H^R\frac{\partial f}{\partial x}(s,x_s)\nabla^{\frac{H}{2},\frac{H}{2}}\psi_u(u,u) + c_H^R\nabla^{\frac{H}{2},\frac{H}{2}}\left(\frac{\partial f}{\partial x}(s,x_s)\psi_u\right)(u,u)\\
		&\hspace{3cm} -2c_H^R\nabla^\frac{H}{2}\left(\frac{\partial f}{\partial x}(s,x_s)\nabla^\frac{H}{2}\psi_u(u)\right)(u)\bigg]\,\mathrm{d}{u}.\numberthis\label{eq:Ito_5}
	\end{align*}
The terms in the above expression are now simplified. Using the product and chain rules from \autoref{lem:product_rule} and \autoref{cor:chain_rule_H}, it follows that
	\begin{equation*}
		\nabla^\frac{H}{2}\left(\frac{\partial f}{\partial x}(s,x_s)\psi_u\right)(u) = \frac{\partial^2f}{\partial x^2}(s,x_s)(\nabla^\frac{H}{2} x_s)(u)\psi_u + \frac{\partial f}{\partial x}(s,x_s)(\nabla^\frac{H}{2} \psi_u)(u).
	\end{equation*}
Similarly, it follows that
	\begin{align*}
		\nabla^\frac{H}{2}\left(\frac{\partial f}{\partial x}(s,x_s)\nabla^\frac{H}{2}\psi_u(u)\right)(u) = \frac{\partial^2f}{\partial x}(s,x_s)\nabla^\frac{H}{2}x_s(u)\nabla^\frac{H}{2}\psi_u(u) + \frac{\partial f}{\partial x}(s,x_s)\nabla^{\frac{H}{2},\frac{H}{2}}\psi_u(u,u)
	\end{align*}
and, for the second-order term, that
		\begin{align*}
			\nabla^{\frac{H}{2},\frac{H}{2}}\left(\frac{\partial f}{\partial x}(s,x_s)\psi_u\right)(u,u) & = \frac{\partial^3f}{\partial x^3}(s,x_s)\left[(\nabla^\frac{H}{2}x_s)(u)\right]^2\psi_u + \\
			& \hspace{1cm} + \frac{\partial^2f}{\partial x^2}(s,x_s) \left[2(\nabla^\frac{H}{2}\psi_u)(u)(\nabla^\frac{H}{2}x_s)(u) + \psi_u(\nabla^{\frac{H}{2},\frac{H}{2}}x_s)(u,u) \right]\\
			& \hspace{1cm} + \frac{\partial f}{\partial x}(s,x_s)(\nabla^{\frac{H}{2},\frac{H}{2}}\psi_u)(u,u).
		\end{align*}
Substituting these formulas in \eqref{eq:Ito_5} yields that
	\begin{align*}
		B_{s,\varepsilon} & = \int_s^{s+\varepsilon}\frac{\partial f}{\partial x}(s,x_s)\psi_u\delta R_u^H \\
		& \hspace{1cm} + 2c_H^{B,R}\int_s^{s+\varepsilon}\left[\frac{\partial f}{\partial x}(s,x_s)\varphi_u+ \frac{\partial^2f}{\partial x^2}(s,x_s)(\nabla^\frac{H}{2} x_s)(u)\psi_u\right]\delta{B}_u^{\frac{H}{2}+\frac{1}{2}}\\
		& \hspace{1cm} + \int_s^{s+\varepsilon}\bigg[\frac{\partial f}{\partial x}(s,x_s)\vartheta_u + 2c_H^R\frac{\partial^2f}{\partial x^2}(s,x_s)(\nabla^\frac{H}{2}x_s)(u)\varphi_u \\
		& \hspace{3cm} + c_H^R\frac{\partial^2 f}{\partial x^2}(s,x_s)(\nabla^{\frac{H}{2},\frac{H}{2}}x_s)(u,u)\psi_u +c_H^R\frac{\partial^3f}{\partial x^3}(s,x_s)[(\nabla^\frac{H}{2}x_s)(u)]^2\psi_u\bigg]\,\mathrm{d}{u}
	\end{align*}
is satisfied almost surely. In order to finish the proof, it suffices to verify the convergence
	\begin{equation}
	\label{eq:Ito_3}
		\frac{1}{\varepsilon}\int_0^tB_{s,\varepsilon}\,\mathrm{d}{s} \quad \overset{L^2(\Omega)}{\underset{\varepsilon\downarrow 0}{\longrightarrow }}\quad \int_0^t \left[\tilde{\vartheta}_s-\frac{\partial f}{\partial s}(s,x_s)\right]\,\mathrm{d}{s} + 2c_H^{B,R}\int_0^t\tilde{\varphi}_s\delta B_s^{\frac{H}{2}+\frac{1}{2}} + \int_0^t \tilde{\psi}_s\delta R_s^H
	\end{equation}
where the processes $(\tilde{\vartheta})$, $(\tilde{\varphi})$, and $(\tilde{\psi})$ are given in \eqref{eq:Ito_formula_full}. Only the following convergence is proved:
	\begin{equation}
	\label{eq:Ito_2}
		\frac{1}{\varepsilon}\int_0^t\int_s^{s+\varepsilon}\frac{\partial f}{\partial x}(s,x_s)\psi_u\delta R_u^H\,\mathrm{d}{s}\quad \overset{L^2(\Omega)}{\underset{\varepsilon\downarrow 0}{\longrightarrow }}\quad\int_0^t \frac{\partial f}{\partial x}(s,x_s)\psi_s\delta R_s^H.
	\end{equation}
The convergence of the other terms can be shown in a similar manner. By using the Fubini-type theorem in \autoref{lem:fubini}, it follows that
	\begin{equation*}
		\frac{1}{\varepsilon}\int_0^t\int_s^{s+\varepsilon}\frac{\partial f}{\partial x}(s,x_s)\psi_u\delta R_u^H\,\mathrm{d}{s} = \frac{1}{\varepsilon}\int_0^{t+\varepsilon}\left(\int_{(u-\varepsilon)\vee 0}^{u\wedge t}\frac{\partial f}{\partial x}(s,x_s)\psi_u\,\mathrm{d}{s}\right)\delta R_u^H
	\end{equation*}
is satisfied almost surely. By using \autoref{prop:boundedness_of_int_R} and \cite[Proposition 1.5.6]{Nua06}, it follows that
	\begin{align*}
		 \left\|\frac{1}{\varepsilon}\int_\varepsilon^{t}\int_{u-\varepsilon}^u \frac{\partial f}{\partial x}(s,x_s)\psi_u\,\mathrm{d}{s}\delta R_u^H - \int_\varepsilon^t \frac{\partial f}{\partial x}(u,x_u)\psi_u\delta R_u^H\right\|_{L^2(\Omega)}^{H} & \lesssim\\
		 & \hspace{-4cm} \lesssim \int_\varepsilon^t\left\|\psi_u\left[\frac{1}{\varepsilon}\int_{u-\varepsilon}^u\frac{\partial f}{\partial x}(s,x_s)\,\mathrm{d}{s} - \frac{\partial f}{\partial x}(u,x_u)\right]\right\|_{\mathbb{D}^{2,2}}^\frac{1}{H}\,\mathrm{d}{u} \\
		 &\hspace{-4cm} \lesssim \int_\varepsilon^t\|\psi_u\|_{\mathbb{D}^{2,4}}^\frac{1}{H}\left\|\frac{1}{\varepsilon}\int_{u-\varepsilon}^u\frac{\partial f}{\partial x}(s,x_s)\,\mathrm{d}{s} - \frac{\partial f}{\partial x}(u,x_u)\right\|_{\mathbb{D}^{2,4}}^\frac{1}{H}\,\mathrm{d}{u}.
	\end{align*}
The convergence
	\begin{equation*}
		\frac{1}{\varepsilon}\int_{u-\varepsilon}^u\frac{\partial f}{\partial x}(s,x_s)\,\mathrm{d}{s} \quad\overset{\mathbb{D}^{2,4}}{\underset{\varepsilon\downarrow 0}{\longrightarrow}} \quad \frac{\partial f}{\partial x}(u,x_u)
	\end{equation*}
is satisfied for almost every $u\in [0,t]$ by the Lebesgue differentiation theorem because
	\begin{equation*}
		\int_0^t\left\|\frac{\partial f}{\partial x}(s,x_s)\right\|_{\mathbb{D}^{2,4}}\,\mathrm{d}{s}\leq t\,\esssup_{u\in (0,t)}\left\|\frac{\partial f}{\partial x}(s,x_s)\right\|_{\mathbb{D}^{2,4}} <\infty.
	\end{equation*}
Moreover, the estimate
	\begin{equation*}
	\|\psi_u\|_{\mathbb{D}^{2,4}}^\frac{1}{H}\left\|\frac{1}{\varepsilon}\int_{u-\varepsilon}^u\left[\frac{\partial f}{\partial x}(s,x_s) - \frac{\partial f}{\partial x}(u,x_u)\right]\,\mathrm{d}{s}\right\|_{\mathbb{D}^{2,4}}^\frac{1}{H}\lesssim \|\psi_u\|_{\mathbb{D}^{2,4}}^\frac{1}{H}\left(\esssup_{s\in (0,t)}\left\|\frac{\partial f}{\partial x}(s,x_s)\right\|_{\mathbb{D}^{2,4}}\right)^\frac{1}{H}
	\end{equation*}
is satisfied for sufficiently small $\varepsilon>0$, and because the right-hand side is integrable (with respect to $\,\mathrm{d}{u}$) on the interval $(0,t)$, Lebesgue's dominated convergence theorem yields the desired convergence \eqref{eq:Ito_2}.
\end{proof}

\begin{remark}
The significance of \autoref{prop:Ito_formula} is two-fold. Firstly, it describes the general structure of the It\^o-type formula that should be expected when a Rosenblatt integrator is involved. Secondly, it gives a general method of proof that could be employed in concrete situations. Moreover, it is expected that the equality \eqref{eq:Ito_formula_full} will be satisfied even under a different set of assumptions that are more suitable in specific cases. Some remarks of the assumptions used here follow.
	\begin{enumerate}[label=(\roman*)]
		\itemsep0em
		\item The assumption of H\"older continuity of $x$ is rather natural since it is known that the Skorokhod-type integrals retain H\"older continuity of the integrator (under suitable conditions on the integrand), see \cite[Theorem 5]{AlosNua03} for fractional integrators and \cite[Proposition 4]{Tud08} for Rosenblatt integrators.
		\item Polynomial growth of $f$ as well as the corresponding integrability of the processes $(\vartheta)$, $(\varphi)$, and $(\psi)$ is assumed for the purposes of the chain rule for Malliavin derivative in \autoref{cor:chain_rule_H}. However, for a specific  problem, a more suitable version of the chain rule can be used, see e.g. \cite[Proposition 1.2.3 or Proposition 1.2.4]{Nua06}. This would lead to a different set of assumptions.
		\item It is not assumed that any of the processes $(\vartheta)$, $(\varphi)$, or $(\psi)$ are adapted.
		\item Assumptions \ref{ass:gen_4} - \ref{ass:gen_7} are not independent of assumptions \ref{ass:gen_1} - \ref{ass:gen_3} and they can pose even stronger conditions on the processes $(\vartheta)$, $(\varphi)$, and $(\psi)$. However, in many cases, it can be convenient to verify conditions that are formulated in terms of the process $x$. In \autoref{prop:Ito_for_integral_only}, it is shown how to obtain conditions solely in terms of the integrand.
	\end{enumerate}
\end{remark}

\begin{remark}
Note that the convergence of the terms with the remainders $R_1, R_2$, and $R_3$ in the equality \eqref{eq:Ito_4} shows that the forward integral
			\begin{equation*}
				\int_0^t\frac{\partial f}{\partial x}(s,x_s)\,\mathrm{d}^{-}x_s
			\end{equation*}
		exists and that
			\begin{equation}
			\label{eq:Ito_forward}
				f(t,x_t)-f(0,x_0) = \int_0^t\frac{\partial f}{\partial s}(s,x_s)\,\mathrm{d}{s} + \int_0^t\frac{\partial f}{\partial x}(s,x_s)\,\mathrm{d}^{-}x_s
			\end{equation}
		is satisfied almost surely (this is, in fact, the statement of \cite[Theorem 1.2]{RusVal95} for processes with zero quadratic variation). Therefore, if the equality
			\begin{align*}
				\int_0^t\frac{\partial f}{\partial x}(s,x_s)\,\mathrm{d}^{-}x_s & = \int_0^t \frac{\partial f}{\partial x}(s,x_s)\,\mathrm{d}^{-}R_s^H + 2c_H^{B,R}\int_0^t \frac{\partial f}{\partial x}(s,x_s)[\varphi_s-(\nabla^\frac{H}{2}\psi_s)(s)]\,\mathrm{d}^{-}B_s^{\frac{H}{2}+\frac{1}{2}} \\
				& \hspace{1cm} + \int_0^t \frac{\partial f}{\partial x}(s,x_s)\bigg[\vartheta_s  - 2c_H^R(\nabla^\frac{H}{2}\varphi_s)(s) + c_H^R (\nabla^{\frac{H}{2},\frac{H}{2}}\psi_s)(s,s)\bigg]\,\mathrm{d}{s}, \numberthis\label{eq:forward_transition}
			\end{align*}
is satisfied almost surely, the proof of \autoref{prop:Ito_formula} could be as follows: Rewrite the Skorokhod integrals in \eqref{eq:fractional_differential} as forward integrals by means of \autoref{prop:relationship_for_Skor_B} and \autoref{prop:relationship_for_Skor_R}, use the It\^o formula for processes with forward differential \eqref{eq:Ito_forward} and equality \eqref{eq:forward_transition}, and rewrite the resulting forward integrals back in their Skorokhod form. To be able to employ this strategy, it would be desirable to find general conditions under which the equality
			\begin{equation}
			\label{eq:interchange_formula}
				\int_0^T g_s^{(1)}\,\mathrm{d}^{-}H_s = \int_0^T g_s^{(1)}g_s^{(2)}\,\mathrm{d}^{-}h_s
			\end{equation}
is satisfied almost surely for some continuous process $(g_t^{(1)})_{t\in[0,T]}$ and the process $(H_t)_{t\in [0,T]}$ that is given by
	\begin{equation*}
		H_t = \int_0^t g_s^{(2)}\,\mathrm{d}^{-}h_s
	\end{equation*}
with some integrable process $(g_t^{(2)})_{t\in [0,T]}$ and continuous process $(h_t)_{t\in [0,T]}$. This problem is also the subject of \cite[Remark 5.9]{Zahle99}. Formally, there are the following equalities
	\begin{align*}
		\int_0^t g_s^{(1)}\,\mathrm{d}^{-}H_s & = \plim_{\varepsilon\downarrow 0} \frac{1}{\varepsilon}\int_0^t g_s^{(1)}\int_s^{s+\varepsilon}g_u^{(2)}\,\mathrm{d}^{-}H_u\,\mathrm{d}{s}\\
		& =\plim_{\varepsilon\downarrow 0}\frac{1}{\varepsilon}\int_0^tg_s^{(1)}\left(\plim_{\delta\downarrow 0}\frac{1}{\delta }\int_s^{s+\varepsilon}g_u^{(2)}(h_{u+\delta}-h_u)\,\mathrm{d}{u}\right)\,\mathrm{d}{s}\numberthis\label{eq:interchange_1}\\
		& = \plim_{\varepsilon\downarrow 0}\plim_{\delta\downarrow 0}\frac{1}{\varepsilon}\int_0^tg_s^{(1)}\left(\frac{1}{\delta }\int_s^{s+\varepsilon}g_u^{(2)}(h_{u+\delta}-h_u)\,\mathrm{d}{u}\right)\,\mathrm{d}{s}\numberthis\label{eq:interchange_2}\\
		& = \plim_{\delta\downarrow 0}\plim_{\varepsilon\downarrow 0}\frac{1}{\delta}\int_0^tg_s^{(1)}\left(\frac{1}{\varepsilon }\int_s^{s+\varepsilon}g_u^{(2)}(h_{u+\delta}-h_u)\,\mathrm{d}{u}\right)\,\mathrm{d}{s}\numberthis\label{eq:interchange_3}\\
		& = \plim_{\delta\downarrow 0}\frac{1}{\delta}\int_0^tg_s^{(1)}\left(\plim_{\varepsilon\downarrow 0}\frac{1}{\varepsilon }\int_s^{s+\varepsilon}g_u^{(2)}(h_{u+\delta}-h_u)\,\mathrm{d}{u}\right)\,\mathrm{d}{s}\\
		& = \plim_{\delta\downarrow 0}\frac{1}{\delta}\int_0^tg_s^{(1)}g_s^{(2)}(h_{s+\varepsilon}+h_{s})\,\mathrm{d}{s}\\
		& = \int_0^tg_s^{(1)}g_s^{(2)}\,\mathrm{d}^{-}h_s.
	\end{align*}
The above formal computation would be correct provided that the probability limit can be interchanged with the outer integral in \eqref{eq:interchange_1} and \eqref{eq:interchange_3}, and that the interchange of the probability limits in \eqref{eq:interchange_2} is also possible. Some criteria for the interchange of probability limits and the integrals can be found using Markov's inequality and a Moore-Osgood-type argument could allow to interchange the two probability limits. In the case of \autoref{prop:Ito_formula}, however, the validity of \eqref{eq:forward_transition} is shown at the level of $\varepsilon$-approximations via the $L^2(\Omega)$-convergence \eqref{eq:Ito_3}.
\end{remark}

\begin{remark}
\autoref{prop:Ito_formula} is compared to the It\^o formula for regular fractional Brownian motions in \cite[Theorem 4.5]{DunHuDun00}. In this remark, the same symbols as in \autoref{prop:Ito_formula} are used to complete the analogy, however note that the objects can be different. Thereom 4.5 of \cite{DunHuDun00} states that if $(x_t)$ is the stochastic process defined by
	\begin{equation*}
		x_t = x_0 + \int_0^t\vartheta_s\,\mathrm{d}{s} + \int_0^t\varphi_s\delta B^{H}_s,
	\end{equation*}
where $H\in (\sfrac{1}{2},1)$ and $(\vartheta)$ and $(\varphi)$ are stochastic process that satisfy suitable integrability assumptions, then the equality
	\begin{equation}
	\label{eq:Ito_fbm}
		f(t,x_t) = f(0,x_0) + \int_0^t\left[\frac{\partial f}{\partial s}(s,x_s) + \frac{\partial f}{\partial x}(s,x_s)\vartheta_s + \frac{\partial f}{\partial x}(s,x_s)\varphi_s D^\phi_sx_s\right]\,\mathrm{d}{s} + \int_0^t\frac{\partial f}{\partial x}(s,x_s)\varphi_s\delta B_s^{H}.
	\end{equation}
is satisfied almost surely. In formula \eqref{eq:Ito_fbm}, the operator $D^\phi$ is the operator $\nabla^{H-\frac{1}{2}}$ (up to a constant). Therefore, the structure of a process with a first-order fractional differential is preserved under compositions with $\mathscr{C}^2$ functions; that is,  for $y_t=f(t,x_t)$ it follows that
	\begin{equation*}
		y_t=y_0+ \int_0^t \tilde{\vartheta}_s\,\mathrm{d}{s} + \int_0^t \tilde{\varphi}_s\delta B_s^{H}.
	\end{equation*}
In the case where $(x_t)$ is defined by
	\begin{equation*}
		x_t = x_0 + \int_0^t\vartheta_s\,\mathrm{d}{s} + 2c_H^{B,R}\int_0^t \varphi_s\delta B_s^{\frac{H}{2}+\frac{1}{2}} + \int_0^t\psi_s\delta R_s^H,
	\end{equation*}
the situation is analogous to the case of fractional Brownian motions because the following formula for $y_t=f(t,x_t)$ is obtained:
	\begin{equation*}
		y_t = y_0  + \int_0^t\tilde{\vartheta}_s\,\mathrm{d}{s} + 2c_H^{B,R}\int_0^t \tilde{\varphi}_s\delta B_s^{\frac{H}{2}+\frac{1}{2}} + \int_0^t\tilde{\psi}_s\delta R_s^H.
	\end{equation*}
On the other hand, a notable difference between formula \eqref{eq:Ito_fbm} and formula \eqref{eq:Ito_formula_full} is the appearance of a term that involves the third derivative $\frac{\partial^3f}{\partial x^3}(s,x_s)$ and a term that involves the second-order fractional stochastic derivative $\nabla^{\frac{H}{2},\frac{H}{2}}x_s$. Both of these terms arise as a result of the chain rule used to compute the second-order fractional stochastic derivative $\nabla^{\frac{H}{2},\frac{H}{2}}(\frac{\partial f}{\partial x}(s,x_s))$. Thus, the appearance of these new terms is a direct consequence of the second-order nature of Rosenblatt processes. These phenomena are also discussed in \cite[p. 548]{Arr15} and in \cite[Remark 8]{Tud08}.
\end{remark}

For applications of \autoref{prop:Ito_formula}, it is necessary to compute explicit formulas for $\nabla^\frac{H}{2}x_t$ and $\nabla^{\frac{H}{2},\frac{H}{2}}x_t$ where $x_t$ is given by
	\begin{equation*}
		x_t = x_0 + \int_0^t\vartheta_s\,\mathrm{d}{s} + 2c_H^{B,R}\int_0^t\varphi_s\delta B_s^{\frac{H}{2}+\frac{1}{2}} + \int_0^t\psi_s\delta R_s^H.
	\end{equation*}
This requires being able to compute both the first and second-order fractional stochastic derivative of the Skorokhod integral with respect to both the fractional Brownian motion and the Rosenblatt process.  Explicit formulas are given in the following four lemmas. These lemmas are proved by, possibly iterative, use of \cite[Proposition 1.3.8]{Nua06} and formula \eqref{eq:integral_beta}.

The first two lemmas give expressions for the first and second-order fractional stochastic derivatives of the Skorokhod integral with respect to a fractional Brownian motion.

\begin{lemma}
\label{prop:1der1int}
Let $g\in L^\frac{2}{1+H}(0,T;\mathbb{D}^{2,2})$. Then the following equality is satisfied for almost every $x\in\mathbb{R}$:
	\begin{equation*}
		\nabla^\frac{H}{2}\left(\int_0^Tg_s\delta B_s^{\frac{H}{2}+\frac{1}{2}}\right)(x) = \int_0^T(\nabla^\frac{H}{2}g_s)(x)\delta B_s^{\frac{H}{2}+\frac{1}{2}}  + c_{\frac{H}{2}+\frac{1}{2}}^B\frac{\mathrm{B}\left(\frac{H}{2},1-H\right)}{\Gamma\left(\frac{H}{2}\right)^2}\int_0^Tg_s|s-x|^{H-1}\,\mathrm{d}{s}.
	\end{equation*}
\end{lemma}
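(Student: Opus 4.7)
The plan is to unfold the definition of the Skorokhod integral, apply a known commutation rule for Malliavin derivatives and Skorokhod integrals, and then identify the two resulting terms by Fubini and the beta-integral formula \eqref{eq:integral_beta}.

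First, by \autoref{def:Skor_int_B} applied with Hurst parameter $\sfrac{H}{2}+\sfrac{1}{2}$, I can write
\begin{equation*}
	\int_0^Tg_s\delta B_s^{\frac{H}{2}+\frac{1}{2}} = c^B_{\frac{H}{2}+\frac{1}{2}}\,\delta\!\left(I_-^{\frac{H}{2}}(\bm{1}_{[0,T]}g)\right).
\end{equation*}
Applying \cite[Proposition 1.3.8]{Nua06} (the identity $D_x\delta(u)=u(x)+\delta(D_xu)$ valid for $u$ in an appropriate domain, which is guaranteed by $g\in L^{2/(1+H)}(0,T;\mathbb{D}^{2,2})$ together with \autoref{prop:boundedness_of_int_B}), and using that the deterministic Bochner-type operator $I_-^{\sfrac{H}{2}}$ commutes with $D_x$, I obtain
\begin{equation*}
	D_x\!\left(\int_0^Tg_s\delta B_s^{\frac{H}{2}+\frac{1}{2}}\right) = c^B_{\frac{H}{2}+\frac{1}{2}}\,I_-^{\frac{H}{2}}(\bm{1}_{[0,T]}g)(x) + \int_0^T D_xg_s\,\delta B_s^{\frac{H}{2}+\frac{1}{2}}.
\end{equation*}

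Next, I apply $I_+^{\sfrac{H}{2}}$ in the variable $x$ to both sides. For the stochastic term, a Fubini argument (analogous to \autoref{lem:fubini} but for the fractional Brownian integral, justified by the fact that $I_+^{\sfrac{H}{2}}$ is a Bochner integral and the integrability assumption on $g$) allows me to interchange $I_+^{\sfrac{H}{2}}$ with the Skorokhod integral, producing
\begin{equation*}
	I_+^{\frac{H}{2}}\!\left(\int_0^T D_\cdot g_s\,\delta B_s^{\frac{H}{2}+\frac{1}{2}}\right)(x) = \int_0^T (\nabla^{\frac{H}{2}} g_s)(x)\,\delta B_s^{\frac{H}{2}+\frac{1}{2}}.
\end{equation*}
For the pathwise term, Fubini on the (deterministic) double integral yields
\begin{equation*}
	I_+^{\frac{H}{2}}\!\left(I_-^{\frac{H}{2}}(\bm{1}_{[0,T]}g)\right)(x) = \frac{1}{\Gamma(\sfrac{H}{2})^2}\int_0^Tg_s\left(\int_{-\infty}^{x\wedge s}(x-u)^{\frac{H}{2}-1}(s-u)^{\frac{H}{2}-1}\,\mathrm{d}{u}\right)\mathrm{d}{s},
\end{equation*}
and the inner integral equals $\mathrm{B}(\sfrac{H}{2},1-H)|x-s|^{H-1}$ by \eqref{eq:integral_beta} (applied with $\alpha=\sfrac{H}{2}\in(\sfrac{1}{4},\sfrac{1}{2})$). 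Summing the two contributions gives the claimed identity.

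The main technical obstacle is justifying each step rigorously: (i) checking that $I_-^{\sfrac{H}{2}}(\bm{1}_{[0,T]}g)\in\mathrm{Dom}(\delta)$ and that $D_x$ applied to this element lies in the correct Sobolev-Watanabe space so that the identity $D\delta(u)=u+\delta(Du)$ is applicable, and (ii) justifying the interchange of $I_+^{\sfrac{H}{2}}$ with the Skorokhod integral. Both follow from \autoref{prop:boundedness_of_int_B}, the $L^{\sfrac{2}{1-2\alpha}}$-mapping property of $\nabla^{\alpha}$ recalled before \autoref{ex:nabla_W}, and a standard approximation by elementary processes, so no new estimates beyond those already contained in the paper are required.
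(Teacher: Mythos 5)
Your proposal is correct and follows essentially the same route the paper indicates for this lemma, namely unfolding \autoref{def:Skor_int_B} with Hurst parameter $\sfrac{H}{2}+\sfrac{1}{2}$, applying the commutation identity $D_x\delta(u)=u(x)+\delta(D_xu)$ from \cite[Proposition 1.3.8]{Nua06}, and identifying the pathwise term via Fubini and formula \eqref{eq:integral_beta} with $\alpha=\sfrac{H}{2}$. The only difference is that you spell out the intermediate steps and the domain/interchange justifications, which the paper leaves implicit.
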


The above \autoref{prop:1der1int} should be compared to \cite[Theorem 4.2]{DunHuDun00}. Note, in particular, that the constant appearing in front of the second integral is different.

\begin{lemma}
\label{prop:2der_1int}
Let $g\in L^\frac{2}{1+H}(0,T;\mathbb{D}^{3,2})$. Then the following equality is satisfied for almost every $x,y\in\mathbb{R}$:
	\begin{align*}
		\nabla^{\frac{H}{2},\frac{H}{2}}\left(\int_0^Tg_s\delta B_s^{\frac{H}{2}+\frac{1}{2}}\right)(x,y) & = \int_0^T(\nabla^{\frac{H}{2},\frac{H}{2}}g_s)(x,y)\delta B_s^{\frac{H}{2}+\frac{1}{2}}\\
		& \hspace{-3cm} + c_{\frac{H}{2}+\frac{1}{2}}^B\frac{\mathrm{B}\left(\frac{H}{2},1-H\right)}{\Gamma\left(\frac{H}{2}\right)^2} \left(\int_0^T(\nabla^\frac{H}{2}g_s)(x)|s-y|^{H-1}\,\mathrm{d}{s} + \int_0^T(\nabla^\frac{H}{2}g_s)(y)|s-x|^{H-1}\,\mathrm{d}{s}\right).
	\end{align*}
\end{lemma}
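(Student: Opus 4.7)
The plan is to iterate \autoref{prop:1der1int}. Since $\nabla^{\frac{H}{2},\frac{H}{2}} = I_{+,+}^{\frac{H}{2},\frac{H}{2}} \circ D^2$ and since $I_{+,+}^{\frac{H}{2},\frac{H}{2}}$ acts on the two variables separately as $I_+^{\frac{H}{2}} \otimes I_+^{\frac{H}{2}}$ and $D^2 = D\circ D$, a direct computation shows that
\begin{equation*}
\nabla^{\frac{H}{2},\frac{H}{2}} F(x,y) \;=\; \nabla^{\frac{H}{2}}\bigl(\nabla^{\frac{H}{2}} F(\cdot)(y)\bigr)(x),
\end{equation*}
where the outer $\nabla^{\frac{H}{2}}$ acts in $x$ and the inner in $y$. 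Therefore, the strategy is to apply \autoref{prop:1der1int} once in the variable $y$, and then apply $\nabla^{\frac{H}{2}}$ in the variable $x$ to each term appearing on the right-hand side.

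First, by \autoref{prop:1der1int}, since $g \in L^{2/(1+H)}(0,T;\mathbb{D}^{3,2}) \hookrightarrow L^{2/(1+H)}(0,T;\mathbb{D}^{2,2})$,
\begin{equation*}
\nabla^{\frac{H}{2}}\!\left(\int_0^T g_s \delta B_s^{\frac{H}{2}+\frac{1}{2}}\right)(y) = \int_0^T (\nabla^{\frac{H}{2}} g_s)(y)\,\delta B_s^{\frac{H}{2}+\frac{1}{2}} + c_{\frac{H}{2}+\frac{1}{2}}^B \frac{\mathrm{B}(\frac{H}{2},1-H)}{\Gamma(\frac{H}{2})^2} \int_0^T g_s |s-y|^{H-1}\,\mathrm{d}{s}.
\end{equation*}
Next, one applies $\nabla^{\frac{H}{2}}$ in $x$ to each summand. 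For the Skorokhod integral term, the integrand $s\mapsto (\nabla^{\frac{H}{2}} g_s)(y)$ lies in $L^{2/(1+H)}(0,T;\mathbb{D}^{2,2})$ by the assumption $g\in L^{2/(1+H)}(0,T;\mathbb{D}^{3,2})$ (combined with the mapping property of $\nabla^{\frac{H}{2}}$ recalled in the paper), so \autoref{prop:1der1int} applies again and yields
\begin{equation*}
\nabla^{\frac{H}{2}}\!\left(\int_0^T (\nabla^{\frac{H}{2}} g_s)(y)\,\delta B_s^{\frac{H}{2}+\frac{1}{2}}\right)(x) = \int_0^T (\nabla^{\frac{H}{2},\frac{H}{2}} g_s)(x,y)\,\delta B_s^{\frac{H}{2}+\frac{1}{2}} + c_{\frac{H}{2}+\frac{1}{2}}^B \frac{\mathrm{B}(\frac{H}{2},1-H)}{\Gamma(\frac{H}{2})^2} \int_0^T (\nabla^{\frac{H}{2}} g_s)(y) |s-x|^{H-1}\,\mathrm{d}{s}.
\end{equation*}
For the deterministic integral term, one commutes $\nabla^{\frac{H}{2}}$ with the $s$-integral (a Fubini-type argument, legitimate because $g_s$ lies in $\mathbb{D}^{3,2}$ and $|s-y|^{H-1}$ is integrable in $s$) to obtain
\begin{equation*}
\nabla^{\frac{H}{2}}\!\left(\int_0^T g_s |s-y|^{H-1}\,\mathrm{d}{s}\right)(x) = \int_0^T (\nabla^{\frac{H}{2}} g_s)(x) |s-y|^{H-1}\,\mathrm{d}{s}.
\end{equation*}
Summing and invoking the symmetry $(\nabla^{\frac{H}{2},\frac{H}{2}} g_s)(x,y) = (\nabla^{\frac{H}{2},\frac{H}{2}} g_s)(y,x)$ inherited from Schwarz's theorem for $D^2$ finishes the proof.

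The main obstacle is bookkeeping the integrability at each stage so that \autoref{prop:1der1int} can be invoked and so that the Fubini step and the commutation of $\nabla^{\frac{H}{2}}$ with the Skorokhod integral are justified; concretely, one must check that $s \mapsto (\nabla^{\frac{H}{2}} g_s)(y)$ and $s \mapsto g_s |s-y|^{H-1}$ belong to the right Bochner spaces. These verifications rely on the mapping property of $\nabla^{\frac{H}{2}}$ from $\mathbb{D}^{k,p}$ to $L^{2/(1-H)}(\mathbb{R};\mathbb{D}^{k-1,2})$ recalled in the paper and on the identity \eqref{eq:integral_beta}; everything else is routine.
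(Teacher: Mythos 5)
Your proposal is correct and follows essentially the same route the paper indicates: the paper offers no detailed argument for this lemma, stating only that the four lemmas follow ``by, possibly iterative, use of \cite[Proposition 1.3.8]{Nua06} and formula \eqref{eq:integral_beta}'', and your iteration of \autoref{prop:1der1int} (which is itself that commutation relation combined with \eqref{eq:integral_beta}) is exactly this strategy carried out explicitly. The integrability bookkeeping you defer --- in particular that $s\mapsto(\nabla^{\frac{H}{2}}g_s)(y)$ lies in $L^{\frac{2}{1+H}}(0,T;\mathbb{D}^{2,2})$ for almost every $y$ (via Minkowski's integral inequality and the mapping property of $\nabla^{\frac{H}{2}}$), and that $\int_0^T\|g_s\|_{\mathbb{D}^{1,2}}|s-y|^{H-1}\,\mathrm{d}s$ is finite for almost every $y$ (a Hardy--Littlewood--Sobolev-type bound rather than a direct H\"older pairing) --- does go through and is consistent with the ``almost every $x,y$'' qualification in the statement.
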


The following two propositions can be used to compute the first and second-order fractional stochastic derivative of the Skorokhod integral with respect to a Rosenblatt process.

\begin{lemma}
\label{prop:1der_2int}
Let $g\in L^\frac{1}{H}(0,T;\mathbb{D}^{3,2})$. Then the following equality is satisfied for almost every $x\in\mathbb{R}$:
\begin{align*}
	\nabla^{\frac{H}{2}}\left(\int_0^Tg_s\delta R_s^H\right)(x) & = \int_0^T(\nabla^\frac{H}{2}g_s)(x)\delta R_s^H + 2c_{H}^{B,R}\frac{\mathrm{B}\left(\frac{H}{2},1-H\right)}{\Gamma\left(\frac{H}{2}\right)^2}\int_0^Tg_s|s-x|^{H-1}\delta B_s^{\frac{H}{2}+\frac{1}{2}}.
\end{align*}
\end{lemma}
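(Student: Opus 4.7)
The plan is to start from the definition $\int_0^T g_s \delta R_s^H = c_H^R \delta^2(U)$, where $U := I_{-,\mathrm{tr}}^{\frac{H}{2},\frac{H}{2}}(\bm{1}_{[0,T]}g)$, and to apply $\nabla^{\frac{H}{2}} = I_+^{\frac{H}{2}} \circ D$ to both sides. First I would invoke the second-order form of the intertwining relation between $D$ and the Skorokhod integral (an iteration of \cite[Proposition 1.3.8]{Nua06}): since $U$ is symmetric in its two arguments,
\begin{equation*}
	D_r \delta^2(U) = 2\,\delta(U(r,\cdot)) + \delta^2(D_r U).
\end{equation*}
Because the transfer operator is deterministic, $D_r U = I_{-,\mathrm{tr}}^{\frac{H}{2},\frac{H}{2}}(\bm{1}_{[0,T]}D_r g)$. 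Multiplying both sides by $(x-r)^{\frac{H}{2}-1}/\Gamma(\frac{H}{2})$ and integrating in $r$ on $(-\infty,x)$---that is, applying $I_+^{\frac{H}{2}}$---should yield the two terms of the claim after commuting $I_+^{\frac{H}{2}}$ with $\delta$ and $\delta^2$.

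The $\delta^2$-contribution is immediate: the commutation produces $c_H^R\,\delta^2(I_{-,\mathrm{tr}}^{\frac{H}{2},\frac{H}{2}}(\bm{1}_{[0,T]}(\nabla^{\frac{H}{2}}g_\cdot)(x)))$, which by \autoref{def:Skor_int_R} equals $\int_0^T (\nabla^{\frac{H}{2}}g_s)(x)\,\delta R_s^H$. The $\delta$-contribution, namely $2c_H^R\,\delta(y \mapsto I_+^{\frac{H}{2}}(r \mapsto U(r,y))(x))$, requires more work: plugging in the definition of $U$ and swapping the order of integration in $r$ and $s$, the inner $r$-integral is evaluated by \eqref{eq:integral_beta} with $\alpha = \frac{H}{2} \in (0,\frac{1}{2})$ and gives the kernel $\mathrm{B}(\frac{H}{2},1-H)|s-x|^{H-1}$. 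What remains is precisely $\mathrm{B}(\frac{H}{2},1-H)/\Gamma(\frac{H}{2})^2$ times $I_-^{\frac{H}{2}}(\bm{1}_{[0,T]} g_\cdot |\cdot - x|^{H-1})(y)$, so that \autoref{def:Skor_int_B} combined with the identity $c_H^{B,R} = c_H^R/c_{\frac{H}{2}+\frac{1}{2}}^B$ converts this $\delta$ into the second term of the claim.

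The main obstacle I anticipate is the rigorous justification of the two interchanges: the Fubini swap in the double $r,s$-integral and, more importantly, the commutation of the Bochner integral $I_+^{\frac{H}{2}}$ with the closed operators $\delta$ and $\delta^2$. Both should follow from the hypothesis $g \in L^{\frac{1}{H}}(0,T;\mathbb{D}^{3,2})$ combined with \autoref{lem:I_tr_bounded} and the mapping estimates underlying \autoref{prop:boundedness_of_int_B} and \autoref{prop:boundedness_of_int_R}: they ensure that every object involved lies in the correct Hilbert space and that the relevant Bochner integrals converge absolutely, whereupon the closedness of $\delta$ and $\delta^2$ (equivalently, the duality relation \eqref{eq:duality_formula}) yields the interchange. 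Once this technical groundwork is in place, the identity reduces to an essentially algebraic consequence of the decomposition above.
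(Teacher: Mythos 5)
Your proposal is correct and follows exactly the route the paper indicates for this lemma (the paper only sketches it in one sentence): iterate \cite[Proposition 1.3.8]{Nua06} to get $D_r\delta^2(U)=2\delta(U(r,\cdot))+\delta^2(D_rU)$ for the symmetric kernel $U=I_{-,\mathrm{tr}}^{\frac{H}{2},\frac{H}{2}}(\bm{1}_{[0,T]}g)$, apply $I_+^{\frac{H}{2}}$, and evaluate the resulting kernel via \eqref{eq:integral_beta}. Your identification of the $\delta$-term as a $\delta B^{\frac{H}{2}+\frac{1}{2}}$-integral with the constant $2c_H^{B,R}\mathrm{B}(\frac{H}{2},1-H)/\Gamma(\frac{H}{2})^2$ matches the statement, so nothing further is needed.
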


\begin{lemma} Let $g\in L^\frac{1}{H}(0,T;\mathbb{D}^{4,2})$. Then the following equality is satisfied for almost every $x,y\in\mathbb{R}$:
\label{prop:2der_2int}
\begin{align*}
	\nabla^{\frac{H}{2},\frac{H}{2}}\left(\int_0^Tg_s\delta R_s^H\right)(x,y) & = \int_0^T(\nabla^{\frac{H}{2},\frac{H}{2}}g_s)(x,y)\delta R_s^H\\
		& \hspace{-3.1cm} + 2c_H^{B,R}\frac{\mathrm{B}\left(\frac{H}{2},1-H\right)}{\Gamma\left(\frac{H}{2}\right)^2}\left(\int_0^T(\nabla^\frac{H}{2}g_s)(x)|s-y|^{H-1}\delta{B}_s^{\frac{H}{2}+\frac{1}{2}} + \int_0^T(\nabla^\frac{H}{2}g_s)(y)|s-x|^{H-1}\delta{B}_s^{\frac{H}{2}+\frac{1}{2}}\right) \\
		& \hspace{-3.1cm} + 2c_H^R\frac{\mathrm{B}\left(\frac{H}{2},1-H\right)^2}{\Gamma\left(\frac{H}{2}\right)^4}\int_0^Tg_s|s-x|^{H-1}|s-y|^{H-1}\,\mathrm{d}{s}.
\end{align*}
\end{lemma}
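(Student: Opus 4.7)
The plan is to compute $\nabla^{\frac{H}{2},\frac{H}{2}}(\int_0^T g_s\,\delta R_s^H) = I_{+,+}^{\frac{H}{2},\frac{H}{2}}D^2\bigl(c_H^R\,\delta^2(I_{-,\mathrm{tr}}^{\frac{H}{2},\frac{H}{2}}(\mathbf{1}_{[0,T]}g))\bigr)$ by iterating the commutation relation $D_z\delta^n(u) = n\delta^{n-1}(u(z,\cdot)) + \delta^n(D_z u)$ from \cite[Proposition 1.3.8]{Nua06} twice, then pushing the fractional integral $I_{+,+}^{\frac{H}{2},\frac{H}{2}}$ inside the Skorokhod operators via Fubini-type arguments. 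Writing $u = I_{-,\mathrm{tr}}^{\frac{H}{2},\frac{H}{2}}(\mathbf{1}_{[0,T]}g)$, which is symmetric in its two arguments, two applications of the commutation formula yield
\[
D^2_{x,y}\delta^2(u) \;=\; 2u(x,y) \;+\; 2\delta\bigl(z\mapsto D_x u(y,z)\bigr) \;+\; 2\delta\bigl(z\mapsto D_y u(x,z)\bigr) \;+\; \delta^2(D^2_{x,y}u).
\]
It remains to apply $c_H^R\,I_{+,+}^{\frac{H}{2},\frac{H}{2}}$ to this identity and recognize each term.

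For the $\delta^2$ term I would use \autoref{lem:fubini} to interchange $I_{+,+}^{\frac{H}{2},\frac{H}{2}}$ with $\delta^2$, together with the interchange of $I_{+,+}^{\frac{H}{2},\frac{H}{2}}$ (acting in the Malliavin variables) with $I_{-,\mathrm{tr}}^{\frac{H}{2},\frac{H}{2}}$ (acting in $s$) for the integrand $D^2_{x',y'}g_s$, which produces $(\nabla^{\frac{H}{2},\frac{H}{2}}g_s)(x,y)$ inside and hence the first term $\int_0^T(\nabla^{\frac{H}{2},\frac{H}{2}}g_s)(x,y)\,\delta R_s^H$ on the right-hand side. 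For the deterministic $u(x,y)$ term, the composition $I_{+,+}^{\frac{H}{2},\frac{H}{2}}\circ I_{-,\mathrm{tr}}^{\frac{H}{2},\frac{H}{2}}$ applied to $\mathbf{1}_{[0,T]}g$ unfolds into two independent $s'$-and-$s''$-integrals that, after interchange of order, each reduce via \eqref{eq:integral_beta} to $\mathrm{B}(\frac{H}{2},1-H)|s-x|^{H-1}$ and $\mathrm{B}(\frac{H}{2},1-H)|s-y|^{H-1}$, producing the last term with the prefactor $2c_H^R\,\mathrm{B}(\frac{H}{2},1-H)^2/\Gamma(\frac{H}{2})^4$. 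For the two cross terms I would again interchange $I_{+,+}^{\frac{H}{2},\frac{H}{2}}$ with $\delta$; inside the $\delta$ the composition $I_{+}^{\frac{H}{2}}\circ I_{-,\mathrm{tr}}^{\frac{H}{2},\frac{H}{2}}$ acting on $D_{x'}g$ collapses by \eqref{eq:integral_beta} to a single $|s-y|^{H-1}$ (resp.\ $|s-x|^{H-1}$) kernel, and the remaining outer $I_+^{\frac{H}{2}}$ in $x'$ applied to $D_{x'}g_s$ yields $(\nabla^{\frac{H}{2}}g_s)(x)$ (resp.\ $(\nabla^{\frac{H}{2}}g_s)(y)$). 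The surviving kernel in the Wiener variable $z$ is precisely of the form $\Gamma(\frac{H}{2})\,I_-^{\frac{H}{2}}$ applied to $(\nabla^{\frac{H}{2}}g_\cdot)(x)\,|\cdot-y|^{H-1}\mathbf{1}_{[0,T]}$, so by \autoref{def:Skor_int_B} the corresponding $\delta$-integral becomes, up to constants, $(c_{\frac{H}{2}+\frac{1}{2}}^B)^{-1}\int_0^T(\nabla^{\frac{H}{2}}g_s)(x)|s-y|^{H-1}\,\delta B_s^{\frac{H}{2}+\frac{1}{2}}$.

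The final bookkeeping uses $c_H^{B,R}=c_H^R/c_{\frac{H}{2}+\frac{1}{2}}^B$ from \autoref{rem:constants} together with the combined prefactor $2c_H^R\cdot\mathrm{B}(\frac{H}{2},1-H)/\Gamma(\frac{H}{2})^2$ to produce exactly $2c_H^{B,R}\,\mathrm{B}(\frac{H}{2},1-H)/\Gamma(\frac{H}{2})^2$ in front of each cross term, matching the asserted formula. The main obstacles are (i) justifying the Fubini-type interchanges of the fractional integrals $I_{+,+}^{\frac{H}{2},\frac{H}{2}}$ and $I_{-,\mathrm{tr}}^{\frac{H}{2},\frac{H}{2}}$ with $\delta$ and $\delta^2$, for which the hypothesis $g\in L^{1/H}(0,T;\mathbb{D}^{4,2})$ together with \autoref{lem:I_tr_bounded}, \autoref{prop:boundedness_of_int_R} and \autoref{lem:fubini} provides the necessary integrability; and (ii) careful tracking of the domains of integration when applying \eqref{eq:integral_beta} inside each nested kernel, where the hypothesis $\tfrac{1}{2}<H<1$ ensures $\tfrac{H}{2}\in(0,\tfrac{1}{2})$ so that the identity is applicable.
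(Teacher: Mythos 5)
Your proposal is correct and follows exactly the route the paper indicates for this lemma (which it does not write out in detail): iterating the commutation relation of \cite[Proposition 1.3.8]{Nua06} twice on $\delta^2(I_{-,\mathrm{tr}}^{\frac{H}{2},\frac{H}{2}}(\mathbf{1}_{[0,T]}g))$ and then collapsing the composed kernels via formula \eqref{eq:integral_beta}. Your constant bookkeeping (each cross term acquiring $2c_H^R\,\mathrm{B}(\frac{H}{2},1-H)/(\Gamma(\frac{H}{2})^2 c^B_{\frac{H}{2}+\frac{1}{2}})=2c_H^{B,R}\mathrm{B}(\frac{H}{2},1-H)/\Gamma(\frac{H}{2})^2$, and the deterministic term acquiring the squared Beta factor) checks out against the stated formula.
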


Some special cases of \autoref{prop:Ito_formula} are given now. Since the It\^o formula for functionals of Skorokhod integrals with respect to the fractional Brownian motion is well-known, e.g. \cite{DunHuDun00}, functionals of the Skorokhod integral with respect to a Rosenblatt process are considered. Moreover, in this case, it is possible to formulate sufficient conditions for the It\^o formula in terms of the integrand rather than in terms of the integral.

\begin{proposition}
\label{prop:Ito_for_integral_only}
Let $f$ be a function in $\mathscr{C}^3(\mathbb{R})$ such that
	\begin{equation}
	\label{eq:poly_growth}
		|f'''(x)|\leq c(1+|x|^\alpha), \quad x\in\mathbb{R},
	\end{equation}
is satisfied for some $c\geq 0$ and $\alpha\geq 0$. Let $\psi=(\psi_s)_{s\in [0,T]}$ be a stochastic process such that the following three assumptions are satisfied:

\begin{enumerate}[label=\upshape{(Z\arabic*)}]
	\item\label{ass:psi_1} The proces $\psi$ is in $L^\infty(0,T;\mathbb{D}^{4,p})$ for some
		\begin{equation*}	
			p>\max\left\{\frac{2}{(2H-1)}, 8(\alpha+1)\right\}.
		\end{equation*}
	\item\label{ass:psi_4} For almost every $r,v\in [0,T]$, the following equalities are satisfied
	\begin{align*}
		& \lim_{\varepsilon\downarrow 0}\esssup_{u\in (v,v+\varepsilon)}\|(\nabla^\frac{H}{2}\psi_r)(u)-(\nabla^\frac{H}{2}\psi_r)(v)\|_{\mathbb{D}^{3,8}} = 0,\\
		&\lim_{\varepsilon\downarrow 0}\esssup_{u\in (v,v+\varepsilon)} \|(\nabla^{\frac{H}{2}, \frac{H}{2}}\psi_r)(u,u)-(\nabla^{\frac{H}{2},\frac{H}{2}}\psi_r)(v,v)\|_{\mathbb{D}^{2,8}}=0, \\
		&\lim_{\varepsilon\downarrow 0}\esssup_{u\in (v,v+\varepsilon)} \|(\nabla^{\frac{H}{2}, \frac{H}{2}}\psi_r)(r,u)-(\nabla^{\frac{H}{2},\frac{H}{2}}\psi_r)(r,v)\|_{\mathbb{D}^{2,8}}=0.
	\end{align*}
	\item\label{ass:psi_2} There exist functions $p_1\in L^\frac{6}{1+H}(0,T)$, $p_2\in L^\frac{1}{H}(0,T)$, and $p_3\in L^\frac{1}{H}(0,T)$ such that for almost every $r\in [0,T]$, the estimates
		\begin{align*}
			& \|(\nabla^\frac{H}{2}\psi_r)(v)\|_{\mathbb{D}^{3,8}}\leq p_1(r)\\
			& \|(\nabla^{\frac{H}{2},\frac{H}{2}}\psi_r)(v,v)\|_{\mathbb{D}^{2,8}}\leq p_2(r)\\
			& \|(\nabla^{\frac{H}{2},\frac{H}{2}}\psi_r)(r,v)\|_{\mathbb{D}^{2,8}}\leq p_3(r)
		\end{align*}
	are satisfied for almost every $v\in [0,T]$.
\end{enumerate}
Define the process $(Z_t)_{t\in [0,T]}$ by
	\begin{equation}
	\label{eq:Z}
		Z_t \overset{\mathrm{Def.}}{=}\int_0^t\psi_r\delta R_r^H.
	\end{equation}
Then the following equality is satisfied for every $t\in [0,T]$ almost surely:
\begin{align*}
		f(Z_t) - f(0) & = \int_0^t f'(Z_s)\psi_s\delta R_s^H \\
		& \hspace{1cm} + 2c_H^{B,R}\int_0^tf''(Z_s)(\nabla^\frac{H}{2}Z_s)(s)\psi_s\delta B_s^{\frac{H}{2}+\frac{1}{2}} \\
		& \hspace{2cm} + c_H^R\int_0^t \left(f''(Z_s)(\nabla^{\frac{H}{2},\frac{H}{2}}Z_s)(s,s) + f'''(Z_s)[(\nabla^\frac{H}{2}Z_s)(s)]^2\right)\psi_s\,\mathrm{d}{s}.\numberthis\label{eq:Ito_for_integral}
	\end{align*}
\end{proposition}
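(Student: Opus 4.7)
The plan is to specialize the general Itô formula of \autoref{prop:Ito_formula} to the setting $\vartheta_s\equiv 0$, $\varphi_s\equiv 0$, $x_0=0$, $f(t,x)=f(x)$, with the integrand process $(\psi_s)$ taken from the hypothesis. Under this choice the process defined in \eqref{eq:fractional_differential} coincides with $Z_t$, and specializing the expressions for $\tilde\vartheta_s$, $\tilde\varphi_s$, $\tilde\psi_s$ yields exactly the right-hand side of \eqref{eq:Ito_for_integral}. All the work therefore consists of verifying that the integrand-level assumptions \ref{ass:psi_1}--\ref{ass:psi_2} imply the integral-level assumptions \ref{ass:gen_1}--\ref{ass:gen_7} of \autoref{prop:Ito_formula} for the process $Z_t$.

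Assumptions \ref{ass:gen_1} and \ref{ass:gen_2} hold trivially. For \ref{ass:gen_3}, the integrability $\psi\in L^{1/H}(0,T;\mathbb{D}^{4,4(\alpha+2)})$ follows from \ref{ass:psi_1} since $p>8(\alpha+1)\geq 4(\alpha+2)$ and $L^\infty(0,T)\subset L^{1/H}(0,T)$; the limit conditions of \ref{ass:gen_3}(a) follow from \ref{ass:psi_4} after applying the embeddings $\mathbb{D}^{3,8}\hookrightarrow\mathbb{D}^{1,8}$ and $\mathbb{D}^{2,8}\hookrightarrow L^4(\Omega)$; and the dominating functions of \ref{ass:gen_3}(b) come from \ref{ass:psi_2}, using the continuous inclusions $L^{6/(1+H)}(0,T)\subset L^{2/(1+2H)}(0,T)$ and $L^{1/H}(0,T)\subset L^1(0,T)$ valid for $H\in(\sfrac{1}{2},1)$.

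For \ref{ass:gen_4} I would apply the chain rule \autoref{lem:chain_rule} together with the polynomial-growth estimate \eqref{eq:poly_growth} to bound $\|f'(Z_s)\|_{\mathbb{D}^{2,4}}$, $\|f''(Z_s)\|_{\mathbb{D}^{1,8}}$, and $\|f'''(Z_s)\|_{L^8(\Omega)}$ by affine functions of $\|Z_s\|_{\mathbb{D}^{2,q}}^{\alpha+2}$ for suitable $q\leq 8(\alpha+1)$. These Sobolev--Watanabe norms of $Z_s$ are then controlled, uniformly in $s\in[0,T]$, by \autoref{prop:boundedness_of_int_R}, which yields $\|Z_s\|_{\mathbb{D}^{2,q}}\lesssim \|\psi\|_{L^{1/H}(0,T;\mathbb{D}^{4,q})}$; the specific lower bound $p>8(\alpha+1)$ in \ref{ass:psi_1} is calibrated to dominate all three bounds simultaneously. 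Assumptions \ref{ass:gen_5}--\ref{ass:gen_7} are the heart of the argument and are where \autoref{prop:1der_2int} and \autoref{prop:2der_2int} enter. These give explicit representations
\begin{equation*}
	(\nabla^{\frac{H}{2}}Z_u)(v) = \int_0^u(\nabla^{\frac{H}{2}}\psi_r)(v)\delta R_r^H + K_1\int_0^u\psi_r|r-v|^{H-1}\delta B_r^{\frac{H}{2}+\frac{1}{2}},
\end{equation*}
together with an analogous three-term expression for $(\nabla^{\frac{H}{2},\frac{H}{2}}Z_u)(v,v)$. Continuity in the upper limit $u$ (required by \ref{ass:gen_5}) then reduces to smallness of integrals of the form $\int_u^v\cdots\delta R^H$ and $\int_u^v\cdots\delta B^{\frac{H}{2}+\frac{1}{2}}$ in $\mathbb{D}^{1,8}$ and $L^8(\Omega)$, which follow from \autoref{prop:boundedness_of_int_R}, \autoref{prop:boundedness_of_int_B}, and the uniform integrability provided by \ref{ass:psi_2}. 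Continuity in $v$ for fixed $s$ (required by \ref{ass:gen_6}) follows from the limit conditions \ref{ass:psi_4} combined with $L^q$-continuity of the kernels $v\mapsto|r-v|^{H-1}$, where the integrability condition $p>2/(2H-1)$ in \ref{ass:psi_1} ensures that the relevant Hölder-type estimate closes. The uniform bounds \ref{ass:gen_7} are obtained by the same representation, bounding each term pointwise using the majorants $p_1,p_2,p_3$ in \ref{ass:psi_2}. Finally, Hölder continuity of $(Z_t)$ of order strictly greater than $\sfrac{1}{2}$ follows from the integrability \ref{ass:psi_1} by the Kolmogorov-type continuity result in \cite[Proposition 4]{Tud08}.

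The main obstacle is essentially bookkeeping: one must verify that the single parameter $p$ in \ref{ass:psi_1} has been chosen large enough so that, simultaneously, (i) the chain-rule estimates in \autoref{lem:chain_rule} absorb the polynomial growth of $f'''$, (ii) the transfer-operator bounds in \autoref{prop:boundedness_of_int_R} translate integrability of $\psi$ into integrability of $Z$ at the orders required by \ref{ass:gen_4}, and (iii) the singular kernels $|r-v|^{H-1}$ appearing in \autoref{prop:1der_2int}--\autoref{prop:2der_2int} are absorbable in the $L^{1/H}$ estimates. The bound $p>\max\{2/(2H-1),\,8(\alpha+1)\}$ is exactly the compatibility condition that keeps all three requirements consistent.
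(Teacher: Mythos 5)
Your proposal is correct and follows essentially the same route as the paper's own proof: specialize \autoref{prop:Ito_formula} with $\vartheta\equiv 0$, $\varphi\equiv 0$, then verify H\"older continuity of $Z$ via \autoref{prop:boundedness_of_int_R} and the Kolmogorov--Chentsov criterion, verify \ref{ass:gen_4} via the chain-rule estimates \eqref{eq:estimates_on_norms_of_f}, and verify \ref{ass:gen_5}--\ref{ass:gen_7} from the explicit representations in \autoref{prop:1der_2int} and \autoref{prop:2der_2int} combined with \autoref{prop:boundedness_of_int_B}, \autoref{prop:boundedness_of_int_R}, and the majorants of \ref{ass:psi_2}. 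The bookkeeping you flag (checking that $p>\max\{2/(2H-1),8(\alpha+1)\}$ suffices simultaneously for the chain rule, the transfer-operator bounds, and the integrability of the kernels $|r-v|^{H-1}$) is exactly the content of the paper's Claims 1--5.
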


\begin{proof}
The conditions of \autoref{prop:Ito_formula} are verified. Condition \ref{ass:gen_3} of \autoref{prop:Ito_formula} is clearly satisfied. It is therefore necessary to verify that the process $Z$ has a version with H\"older continuous sample paths of order greater than $\sfrac{1}{2}$ (that is considered in the sequel) and that conditions \ref{ass:gen_4} - \ref{ass:gen_7} of \autoref{prop:Ito_formula} are satisfied.

\medskip

\textit{Claim 1:} The process $Z$ has a version with H\"older continuous sample paths of an order greater than $\sfrac{1}{2}$.

\textit{Proof of Claim 1:} By using \autoref{prop:boundedness_of_int_R}, the embedding $\mathbb{D}^{4,p}\hookrightarrow \mathbb{D}^{2,p}$, and assumption \ref{ass:psi_1} successively, it follows that
	\begin{equation*}
		\mathbb{E}|Z_t-Z_s|^p=\mathbb{E}\left|\int_s^t \psi_r\delta R_r^H\right|^p \lesssim \left(\int_s^t \|\psi_r\|_{\mathbb{D}^{2,p}}^\frac{1}{H}\,\mathrm{d}{r}\right)^{pH} \leq \|\psi\|_{L^\infty(0,T;\mathbb{D}^{4,p})}^p (t-s)^{pH}
	\end{equation*}
is satisfied for $0\leq s<t\leq T$. This implies, by the Kolmogorov-Chentsov criterion, that the process $(Z_t)_{t\in [0,T]}$ has a version with H\"older continuous sample paths of every order less than $H-\sfrac{1}{p}$. Clearly,  $H-\sfrac{1}{p} > \sfrac{1}{2}$, see also \cite[Proposition 4]{Tud08}.

\medskip

\textit{Claim 2:} The process $Z$ satisfies condition \ref{ass:gen_4} of \autoref{prop:Ito_formula}.

\textit{Proof of Claim 2:} By using the estimate \eqref{eq:estimates_on_norms_of_f}, and \autoref{prop:boundedness_of_int_R} the estimate
	\begin{align*}
		\esssup_{s\in[0,T]}\|f'(Z_s)\|_{\mathbb{D}^{2,4}} & \lesssim 1 + \esssup_{s\in [0,T]}\|Z_s\|_{\mathbb{D}^{2,4(\alpha+2)}}^{\alpha+2}\\
		& \lesssim 1 + \esssup_{s\in [0,T]}\left(\int_0^s\|\psi_r\|_{\mathbb{D}^{4,4(\alpha+2)}}^\frac{1}{H}\,\mathrm{d}{r}\right)^{H(\alpha+2)}\\
		& \lesssim T^{H(\alpha+2)}\|\psi\|_{L^\infty(0,T;\mathbb{D}^{4,4(\alpha+2)})}^{\alpha+2}.
	\end{align*}
is obtained. The last expression is finite by the embedding $\mathbb{D}^{4,p}\hookrightarrow \mathbb{D}^{4(\alpha+2)}$ and assumption \ref{ass:psi_1}. Similarly, it follows that
	\begin{equation*}
		\esssup_{s\in[0,T]}\|f''(Z_s)\|_{\mathbb{D}^{1,8}} \lesssim 1 + \esssup_{s\in [0,T]}\|Z_s\|^{\alpha+1}_{\mathbb{D}^{1,8(\alpha +1)}} \lesssim 1 + \left(\int_0^T\|\psi_r\|^\frac{1}{H}_{\mathbb{D}^{1,8(\alpha+1)}}\,\mathrm{d}{r}\right)^{H(\alpha+1)} <\infty
	\end{equation*}
and also $\esssup_{s\in[0,T]} \|f'''(Z_s)\|_{L^8(\Omega)} <\infty$.

\medskip

\textit{Claim 3:} The process $Z$ satisfies condition \ref{ass:gen_5} of \autoref{prop:Ito_formula}.

\textit{Proof of Claim 3:} By \autoref{prop:1der_2int}, the inequality
	\begin{equation*}
		\|(\nabla^\frac{H}{2}Z_u)(v)-(\nabla^\frac{H}{2}Z_v)(v)\|_{\mathbb{D}^{1,8}} \lesssim \left\|\int_u^v (\nabla^\frac{H}{2}\psi_r)(v)\delta R_r^H\right\|_{\mathbb{D}^{1,8}} + \left\|\int_u^v\psi_r|v-r|^{H-1}\delta{B}_r^{\frac{H}{2}+\frac{1}{2}}\right\|_{\mathbb{D}^{1,8}}
	\end{equation*}
is satisfied for almost every $u,v\in [0,T]$ such that $0<u<v<T$. By using \autoref{prop:boundedness_of_int_R} and assumption \ref{ass:psi_2}, it follows for the norm of the first integral that
	\begin{align*}
		\lim_{\varepsilon\downarrow 0} \esssup_{u\in (v-\varepsilon,v)}\left\|\int_u^v (\nabla^\frac{H}{2}\psi_r)(v)\delta R_r^H\right\|_{\mathbb{D}^{1,8}} & \lesssim\lim_{\varepsilon\downarrow 0} \esssup_{u\in (v-\varepsilon,v)} \left(\int_u^v\|(\nabla^\frac{H}{2}\psi_r)(v)\|_{\mathbb{D}^{3,8}}^\frac{1}{H}\,\mathrm{d}{r}\right)^H \\
		& \leq \lim_{\varepsilon\downarrow 0} \esssup_{u\in (v-\varepsilon,v)} \left(\int_{u}^vp_1(r)^\frac{1}{H}\,\mathrm{d}{r}\right)^H \\
		& = \lim_{\varepsilon\downarrow 0} \left(\int_{v-\varepsilon}^vp_1(r)^\frac{1}{H}\,\mathrm{d}{r}\right)^H
	\end{align*}
for almost every $v\in [0,T]$. The last limit is zero since $p_1$ belongs to the space $L^\frac{6}{H+1}(0,T)$ by \ref{ass:psi_2}. Similarly for the norm of the second integral, it follows that
\begin{align*}
	\lim_{\varepsilon\downarrow 0}\esssup_{u\in (v-\varepsilon,v)} \left\|\int_u^v\psi_r|v-r|^{H-1}\delta{B}_r^{\frac{H}{2}+\frac{1}{2}}\right\|_{\mathbb{D}^{1,8}} & \lesssim \lim_{\varepsilon\downarrow 0}\esssup_{u\in (v-\varepsilon,v)} \left(\int_u^v\|\psi_r\|_{\mathbb{D}^{2,8}}^\frac{2}{H+1}(v-r)^\frac{2(H-1)}{H+1}\,\mathrm{d}{r}\right)^\frac{H+1}{2}\\
	& \lesssim \|\psi\|_{L^\infty(0,T;\mathbb{D}^{2,8})}\lim_{\varepsilon\downarrow 0}\left(\int_{v-\varepsilon}^v(v-r)^\frac{2(H-1)}{H+1}\,\mathrm{d}{r}\right)^\frac{H+1}{2}
\end{align*}
is satisfied for almost every $v\in [0,T]$ by using \autoref{prop:boundedness_of_int_B}. The last expression is clearly zero since the norm is finite by the embedding $\mathbb{D}^{4,p}\hookrightarrow \mathbb{D}^{2,8}$ and assumption \ref{ass:psi_1}. Similarly, by using \autoref{prop:2der_2int} and \autoref{prop:boundedness_of_int_B} and \autoref{prop:boundedness_of_int_R} successively, the inequality
\begin{align*}
	\|(\nabla^{\frac{H}{2},\frac{H}{2}}Z_u)(v,v)-(\nabla^{\frac{H}{2},\frac{H}{2}}Z_v)(v,v)\|_{L^8(\Omega)} & \lesssim \left(\int_u^v\|(\nabla^{\frac{H}{2},\frac{H}{2}}\psi_r)(v,v)\|_{\mathbb{D}^{2,8}}^\frac{1}{H}\,\mathrm{d}{r}\right)^H \\
	& \hspace{1cm} + \left(\int_u^v\|(\nabla^\frac{H}{2}\psi_r)(v)\|_{\mathbb{D}^{1,8}}^\frac{2}{H+1}|v-r|^\frac{2(H-1)}{H+1}\,\mathrm{d}{r}\right)^\frac{H+1}{2} \\
	& \hspace{1cm} + \int_u^v\|\psi_r\|_{L^8(\Omega)}|v-r|^{2H-2}\,\mathrm{d}{r}\\
	& \leq \left(\int_u^vp_2(r)^\frac{1}{H}\,\mathrm{d}{r}\right)^H \\
	& \hspace{1cm} + \left(\int_u^vp_1(r)^\frac{6}{H+1}\,\mathrm{d}{r}\right)^\frac{H+1}{6}\left(\int_u^v|v-r|^\frac{3(H-1)}{H+1}\,\mathrm{d}{r}\right)^\frac{H+1}{3(H-1)} \\
	& \hspace{1cm} + \|\psi\|_{L^\infty(0,T;L^8(\Omega))}\int_u^v|v-r|^{2H-2}\,\mathrm{d}{r}
\end{align*}
is satisfied for almost every $u,v\in [0,T]$ such that $v>u$. The second inequality is obtained by using assumption \ref{ass:psi_2} and H\"older's inequality. By the embedding $\mathbb{D}^{4,p}\hookrightarrow L^8(\Omega)$ and assumptions \ref{ass:psi_1} and \ref{ass:psi_2}, it follows from the last inequality that
	\begin{equation*}
		\lim_{\varepsilon\downarrow 0}\esssup_{u\in (v-\varepsilon,v)}\|(\nabla^{\frac{H}{2},\frac{H}{2}}Z_u)(v,v)-(\nabla^{\frac{H}{2},\frac{H}{2}}Z_v)(v,v)\|_{L^8(\Omega)}=0
	\end{equation*}
is satisfied for almost every $v\in [0,T]$.

\medskip

\textit{Claim 4:} The process $Z$ satisfies condition \ref{ass:gen_6} of \autoref{prop:Ito_formula}.

\textit{Proof of Claim 4:} By using \autoref{prop:1der_2int}, it follows that the estimate
	\begin{align*}
		\|(\nabla^\frac{H}{2}Z_s)(u)-(\nabla^\frac{H}{2}Z_s)(v)\|_{\mathbb{D}^{1,8}} & \lesssim \left\|\int_0^s\left[(\nabla^\frac{H}{2}\psi_r)(u)-(\nabla^\frac{H}{2}\psi_r)(v)\right]\delta{R}_r^H\right\|_{\mathbb{D}^{1,8}} \\
		& \hspace{1cm} + \left\|\int_0^s\psi_r\left[|u-r|^{H-1}-|v-r|^{H-1}\right]\delta B_r^{\frac{H}{2}+\frac{1}{2}}\right\|_{\mathbb{D}^{1,8}}\numberthis\label{eq:claim_4_1}
	\end{align*}
is satisfied for every $s\in [0,T]$ and almost every $u,v\in [0,T]$ such that $u>v$. By using \autoref{prop:boundedness_of_int_R}, the following estimate for the norm of the first stochastic integral in \eqref{eq:claim_4_1} is obtained:
	\begin{equation*}
		\left\|\int_0^s\left[(\nabla^\frac{H}{2}\psi_r)(u)-(\nabla^\frac{H}{2}\psi_r)(v)\right]\delta{R}_r^H\right\|_{\mathbb{D}^{1,8}} \lesssim \left(\int_0^s\|(\nabla^\frac{H}{2}\psi_r)(u)-(\nabla^\frac{H}{2}\psi_r)(v)\|_{\mathbb{D}^{3,8}}^\frac{1}{H}\,\mathrm{d}{r}\right)^H.
	\end{equation*}
The integrand in the last expression tends to zero as $u\downarrow v$ by assumption \ref{ass:psi_4} and it is dominated by $p_1$ by assumption \ref{ass:psi_2}. Consequently, it follows that
	\begin{equation*}
		\lim_{\varepsilon\downarrow 0}\esssup_{u\in (v,v+\varepsilon)} \left\|\int_0^s\left[(\nabla^\frac{H}{2}\psi_r)(u)-(\nabla^\frac{H}{2}\psi_r)(v)\right]\delta{R}_r^H\right\|_{\mathbb{D}^{1,8}} = 0
	\end{equation*}
is satisfied for every $s\in [0,T]$ by the Lebesgue's dominated convergence theorem. Similarly, by \autoref{prop:boundedness_of_int_B}, the following estimate for the norm of the second integral in \eqref{eq:claim_4_1} is obtained:
	\begin{equation*}
		\left\|\int_0^s\psi_r\left[|u-r|^{H-1}-|v-r|^{H-1}\right]\delta B_r^{\frac{H}{2}+\frac{1}{2}}\right\|_{\mathbb{D}^{1,8}} \lesssim \left(\int_0^s\|\psi_r\|_{\mathbb{D}^{2,8}}^\frac{2}{H+1}\left||u-r|^{H-1}-|v-r|^{H-1}\right|^{\frac{2}{H+1}}\,\mathrm{d}{r}\right)^\frac{H+1}{2}.
	\end{equation*}
This estimate yields, by using the embedding $\mathbb{D}^{4,p}\hookrightarrow \mathbb{D}^{2,8}$ and assumption \eqref{ass:psi_1}, that
	\begin{equation*}
		\lim_{\varepsilon\downarrow 0}\esssup_{u\in (v,v+\varepsilon)} \left\|\int_0^s\psi_r\left[|u-r|^{H-1}-|v-r|^{H-1}\right]\delta B_r^{\frac{H}{2}+\frac{1}{2}}\right\|_{\mathbb{D}^{1,8}} =0
	\end{equation*}
is satisfied for every $s\in [0,T]$. By similar arguments, it can also be shown that
	\begin{equation*}
		\lim_{\varepsilon\downarrow 0}\esssup_{u\in (v,v+\varepsilon)} \|(\nabla^{\frac{H}{2},\frac{H}{2}}Z_s)(u,u)-(\nabla^{\frac{H}{2},\frac{H}{2}}Z_s)(v,v)\|_{L^8(\Omega)} = 0.
	\end{equation*}
	
\medskip
	
\textit{Claim 5:} The process $Z$ satisfies condition \ref{ass:gen_7} of \autoref{prop:Ito_formula}.

\textit{Proof of Claim 5:} As above, by using \autoref{prop:1der_2int} and \autoref{prop:boundedness_of_int_R}, the estimate
	\begin{align*}
		 \esssup_{s,u\in [0,T]}\|(\nabla^\frac{H}{2}Z_s)(u)\|_{\mathbb{D}^{1,8}} & \lesssim \esssup_{s,t\in [0,T]}\left[ \left(\int_0^s\|(\nabla^{\frac{H}{2}}\psi_r)(u)\|_{\mathbb{D}^{3,8}}^\frac{1}{H}\,\mathrm{d}{r}\right)^H + \left(\int_0^s\|\psi_r\|_{\mathbb{D}^{2,8}}^\frac{2}{H+1}|u-r|^{\frac{2(H-1)}{H+1}}\,\mathrm{d}{r}\right)^\frac{H+1}{2}\right]\\
		& \leq \|p_1\|_{L^\frac{1}{H}(0,T)} + \|\psi\|_{L^\infty(0,T;\mathbb{D}^{2,8})}\esssup_{u\in [0,T]}\left(\int_0^s|u-r|^\frac{2(H-1)}{H+1}\,\mathrm{d}{r}\right)^\frac{H+1}{2}.
	\end{align*}
is obtained. The essential supremum in the expression above is clearly finite and the norms are finite by assumptions \ref{ass:psi_1} and \ref{ass:psi_2}. The finiteness of
	\begin{equation*}
		\esssup_{s,u\in [0,T]} \|(\nabla^{\frac{H}{2},\frac{H}{2}}Z_s)(u,u)\|_{L^8(\Omega)}
	\end{equation*}
can be shown similarly.
\end{proof}

\begin{example}
It is now shown that the It\^o-type formula \eqref{eq:Ito_for_integral} is satisfied for any function $f$ in $\mathscr{C}^3(\mathbb{R})$ with bounded third derivative and the process $(Z_W(t))_{t\in [0,T]}$ given by
	\begin{equation*}	
	Z_W(t)\overset{\mathrm{Def.}}{=}\int_0^tW_s\delta R_s^H
	\end{equation*}
where $W$ is the underlying Wiener process. It is necessary to show that the Wiener process $W$ satisfies the conditions on $\psi$ in \autoref{prop:Ito_for_integral_only} with $\alpha=0$. For every $r\in [0,T]$, $W_s = I(\bm{1}_{[0,s]})$. Thus
	\begin{equation*}
		\|W_r\|_{\mathbb{D}^{4,p}} = \left(\mathbb{E}|W_s|^p+\|\bm{1}_{[0,s]}\|_{L^2(\mathbb{R})}^p\right)^\frac{1}{p} \lesssim \sqrt{s}
	\end{equation*}
is satisfied for every $p\geq 2$ by the fact that the process $W$ has equivalent moments. This implies that \ref{ass:psi_1} is satisfied. From \autoref{ex:nabla_W}, it further follows that
	\begin{equation*}
		\nabla^{\frac{H}{2}}W_s(u) \eqsim \left[u_+^\frac{H}{2}-(u-s)_+^\frac{H}{2}\right]
	\end{equation*}
is satisfied for every $u\in\mathbb{R}$ and $\nabla^{\frac{H}{2},\frac{H}{2}}W_s(u,v)=0$ for every $u,v\in\mathbb{R}$. Therefore, \ref{ass:psi_4} and \ref{ass:psi_2} are satisfied as well.
\end{example}

\begin{example}
In this example, it is shown that the It\^o-type formula \eqref{eq:Ito_for_integral} is satisfied for any function $f$ in $\mathscr{C}^3(\mathbb{R})$ with bounded third derivative and the process $(Z_R(t))_{t\in[0,T]}$ given by
	\begin{equation*}
		Z_R(t) \overset{\mathrm{Def.}}{=} \int_0^tR_s^H\delta R_s^H.
	\end{equation*}
Recall from \autoref{def:Rosenblatt_process} that for every $s\in[0,T]$, the Rosenblatt process is given by $R_s^H = C_H^RI_2(h_s^H)$ where
	\begin{equation*}
		h_s^H(x,y) = \int_0^s(u-x)_+^{\frac{H}{2}-1}(u-y)_+^{\frac{H}{2}-1}\,\mathrm{d}{u}.
	\end{equation*}
Since the Rosenblatt process is defined as the second-order multiple Wiener-It\^o integral, all of its moments can be estimated by its second moment, see \cite[Corollary 2.8.14]{NouPec12}. By using this property and formula \eqref{eq:integral_beta}, it follows that
	\begin{equation*}
		\|R_s^H\|_{\mathbb{D}^{4,p}} \lesssim \|h^H_s\|_{L^2(\mathbb{R}^2)} \eqsim s^{H}
	\end{equation*}
is satisfied for every $p\geq 2$. This implies that \ref{ass:psi_1} is satisfied with $\alpha=0$. Using \autoref{prop:1der_2int} and \autoref{prop:2der_2int}, it follows that
	\begin{align*}
		& \nabla^{\frac{H}{2}}R_s^H(u) \eqsim \int_0^s|u-r|^{H-1}\delta B_{r}^{\frac{H}{2}+\frac{1}{2}}\\
		& \nabla^{\frac{H}{2},\frac{H}{2}}R_s^H(u,v) \eqsim \int_0^s|u-r|^{H-1}|v-r|^{H-1}\,\mathrm{d}{r}
	\end{align*}
from which, appealing to \autoref{prop:boundedness_of_int_B}, it follows that both conditions \ref{ass:psi_4} and \ref{ass:psi_2} are satisfied as well.
\end{example}

The following corollary of \autoref{prop:Ito_for_integral_only} provides an It\^o-type formula for functionals of Wiener integrals with respect to the Rosenblatt process, i.e. when the integrand is deterministic.

\begin{corollary}
\label{cor:Ito_for_Wiener_integral}
Let $f\in\mathscr{C}^3(\mathbb{R})$ be such that its third derivative has at most polynomial growth and let $(\psi_t)_{t\in [0,T]}$ be a bounded deterministic function. Let $(Z_t)_{t\in [0,T]}$ be the integral process defined by \eqref{eq:Z}. Then the formula
	\begin{align*}
		f(Z_t) & = f(0) + \int_0^tf'(Z_s)\psi_s\delta R_s^H \\
		&\hspace{5mm} + H(2H-1)\int_0^tf''(Z_s)\psi_s\int_0^s\psi_r(s-r)^{2H-2}\,\mathrm{d}{r}\,\mathrm{d}{s}\\
		&\hspace{1.5cm} + c_1(H)\int_0^tf''(Z_s)\psi_s\left(\int_0^s\psi_r(s-r)^{H-1}\delta B_r^{\frac{H}{2}+\frac{1}{2}}\right)\delta B_s^{\frac{H}{2}+\frac{1}{2}}\\
		& \hspace{5mm} + (\sqrt{2H(2H-1)})^3\int_0^tf'''(Z_s)\psi_s\int_0^s\psi_u(s-u)^{H-1}\int_0^u\psi_v(s-v)^{H-1}(u-v)^{H-1}\,\mathrm{d}{v}\,\mathrm{d}{u}\,\mathrm{d}{s}\\
		& \hspace{1.5cm} + c_2(H) \int_0^t f'''(Z_s)\psi_s\left(\int_0^s\psi_u(s-u)^{H-1}\left(\int_0^u\psi_v(s-v)^{H-1}\delta B_v^{\frac{H}{2}+\frac{1}{2}}\right)\delta B_u^{\frac{H}{2}+\frac{1}{2}}\right)\,\mathrm{d}{s}
	\end{align*}
is satisfied almost surely for every $t\in [0,T]$ with the constants $c_1(H)$ and $c_2(H)$ given by
\begin{equation}
	\label{eq:constants_c_12(H)}
		c_1(H) \overset{\textnormal{Def.}}{=} \frac{4(2H-1)}{H+1}, \qquad c_2(H)\overset{\textnormal{Def.}}{=} \frac{8(2H-1)}{H+1}\sqrt{\frac{H(2H-1)}{2}}.
	\end{equation}
\end{corollary}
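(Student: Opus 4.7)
The plan is to apply \autoref{prop:Ito_for_integral_only} to the deterministic bounded integrand $\psi$ and then explicitly evaluate the fractional stochastic derivatives of $Z_s$ that appear on the right-hand side of \eqref{eq:Ito_for_integral} using \autoref{prop:1der_2int} and \autoref{prop:2der_2int}. Since $\psi$ is deterministic and bounded, assumptions \ref{ass:psi_1}--\ref{ass:psi_2} are trivially satisfied: $\psi \in L^\infty(0,T;\mathbb{D}^{k,p})$ for every $k,p$, and $\nabla^{\frac{H}{2}}\psi_r \equiv 0$, $\nabla^{\frac{H}{2},\frac{H}{2}}\psi_r \equiv 0$, so the dominating functions $p_i$ can be chosen to be zero and all the limit conditions hold automatically.

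For the first step, applying \autoref{prop:1der_2int} and \autoref{prop:2der_2int} to $Z_s = \int_0^s\psi_r\delta R_r^H$ and using that the fractional derivatives of $\psi$ vanish, I obtain
\begin{align*}
    (\nabla^{\frac{H}{2}}Z_s)(s) & = 2c_H^{B,R}\frac{\mathrm{B}(\frac{H}{2},1-H)}{\Gamma(\frac{H}{2})^2}\int_0^s\psi_r(s-r)^{H-1}\delta B_r^{\frac{H}{2}+\frac{1}{2}},\\
    (\nabla^{\frac{H}{2},\frac{H}{2}}Z_s)(s,s) & = 2c_H^R\frac{\mathrm{B}(\frac{H}{2},1-H)^2}{\Gamma(\frac{H}{2})^4}\int_0^s\psi_r(s-r)^{2H-2}\,\mathrm{d}{r}.
\end{align*}
Substituting these expressions into \eqref{eq:Ito_for_integral} immediately yields the $\delta R_s^H$-integral, the $f''(Z_s)\delta B_s^{\frac{H}{2}+\frac{1}{2}}$-integral, and the $f''(Z_s)\,\mathrm{d}{s}$-integral in the desired formula. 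Verifying that the combined constants reduce to $c_1(H) = \frac{4(2H-1)}{H+1}$ and $H(2H-1)$, respectively, is a direct computation using the explicit form of $c_H^R$, $c_H^{B,R}$, and the identity $\mathrm{B}(\frac{H}{2},1-H) = \Gamma(\frac{H}{2})\Gamma(1-H)/\Gamma(1-\frac{H}{2})$.

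The nontrivial step is handling the $f'''$-term $c_H^R\int_0^t f'''(Z_s)[(\nabla^{\frac{H}{2}}Z_s)(s)]^2\psi_s\,\mathrm{d}{s}$, because the square $[(\nabla^{\frac{H}{2}}Z_s)(s)]^2$ is the square of a Wiener-type integral with respect to $B^{\frac{H}{2}+\frac{1}{2}}$ with the deterministic kernel $g_r^{(s)}(u) \overset{\mathrm{Def.}}{=}\psi_r(s-r)^{H-1}$. For this, I will use the second-chaos product formula: since $J_s \overset{\mathrm{Def.}}{=} \int_0^s g_r^{(s)}\delta B_r^{\frac{H}{2}+\frac{1}{2}}$ lies in the first Wiener chaos, the chaos decomposition gives
\begin{equation*}
    J_s^2 = 2\int_0^s g_u^{(s)}\left(\int_0^u g_v^{(s)}\delta B_v^{\frac{H}{2}+\frac{1}{2}}\right)\delta B_u^{\frac{H}{2}+\frac{1}{2}} + H'(2H'-1)\int_0^s\!\!\int_0^s g_u^{(s)}g_v^{(s)}|u-v|^{2H'-2}\,\mathrm{d}{u}\,\mathrm{d}{v}
\end{equation*}
where $H' = \frac{H}{2}+\frac{1}{2}$, so $2H'-2 = H-1$ and $H'(2H'-1) = \frac{H(H+1)}{2}$. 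By symmetry in $(u,v)$, the double integral can be rewritten as $H(H+1)\int_0^s\psi_u(s-u)^{H-1}\int_0^u\psi_v(s-v)^{H-1}(u-v)^{H-1}\,\mathrm{d}{v}\,\mathrm{d}{u}$, producing the triple-deterministic-integral term in the statement.

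The final bookkeeping step verifies that multiplying $c_H^R \cdot 4(c_H^{B,R})^2\mathrm{B}(\frac{H}{2},1-H)^2/\Gamma(\frac{H}{2})^4$ by the two combinatorial factors $2$ and $H(H+1)$ yields exactly $c_2(H)$ and $(\sqrt{2H(2H-1)})^3$. Using the already-verified identity $c_H^R\cdot\mathrm{B}(\frac{H}{2},1-H)/\Gamma(\frac{H}{2})^2 = \sqrt{2H(2H-1)}/2$ together with $4(c_H^{B,R})^2\mathrm{B}(\frac{H}{2},1-H)/\Gamma(\frac{H}{2})^2 = 4(2H-1)/(H+1)$ from the Skorokhod $\delta B^{\frac{H}{2}+\frac{1}{2}}$-term, both constants fall out after a short calculation. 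The main obstacles in the proof are therefore purely computational: justifying the chaos product formula for $J_s^2$ in the present infinite-interval Skorokhod setting (which requires noting that $J_s$ is a Gaussian random variable in the first Wiener chaos over $W$ so that the formula reduces to the classical one via the transfer operator $I_-^{H-\frac{1}{2}}$) and keeping track of the Beta and Gamma constants throughout.
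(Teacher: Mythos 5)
Your proposal is correct and follows essentially the same route as the paper: apply \autoref{prop:Ito_for_integral_only} (whose hypotheses are trivially met for a bounded deterministic $\psi$, since its fractional stochastic derivatives vanish), evaluate $(\nabla^{\frac{H}{2}}Z_s)(s)$ and $(\nabla^{\frac{H}{2},\frac{H}{2}}Z_s)(s,s)$ via \autoref{prop:1der_2int} and \autoref{prop:2der_2int}, and expand $[(\nabla^{\frac{H}{2}}Z_s)(s)]^2$ into an iterated second-chaos integral plus its variance --- the paper performs this last step by citing the chain rule for Wiener integrals with respect to fractional Brownian motion from Duncan--Hu--Pasik-Duncan rather than the first-chaos product formula, but the two computations are identical. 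All of your constants check out; in fact your expression for $(\nabla^{\frac{H}{2},\frac{H}{2}}Z_s)(s,s)$ carrying the factor $\mathrm{B}(\frac{H}{2},1-H)^2/\Gamma(\frac{H}{2})^4$ is the one consistent with \autoref{prop:2der_2int} and with the final coefficient $H(2H-1)$.
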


\begin{proof}
By \autoref{prop:Ito_for_integral_only}, it follows that
	\begin{align*}
		f(Z_t) & =f(0) + \int_0^t f'(Z_s)\psi_s\delta R_s^H \\
		& \hspace{1cm} + 2c_H^{B,R}\int_0^tf''(Z_s)(\nabla^\frac{H}{2}Z_s)(s)\psi_s\delta B_s^{\frac{H}{2}+\frac{1}{2}} \\
		& \hspace{2cm} + c_H^R\int_0^t \left(f''(Z_s)(\nabla^{\frac{H}{2},\frac{H}{2}}Z_s)(s,s) + f'''(Z_s)[(\nabla^\frac{H}{2}Z_s)(s)]^2\right)\psi_s\,\mathrm{d}{s}.\numberthis\label{eq:not_refined_formula}
	\end{align*}
For the verification of this corollary, it is necessary to compute $\nabla^\frac{H}{2}Z_s(s)$, $[\nabla^\frac{H}{2}Z_s(s)]^2$, and $\nabla^{\frac{H}{2},\frac{H}{2}}Z_s(s,s)$. By \autoref{prop:1der_2int}, it follows that
	\begin{equation*}
		\nabla^{\frac{H}{2}}Z_s(s) = \frac{2c_H^{B,R}\mathrm{B}\left(\frac{H}{2},1-H\right)}{\Gamma\left(\frac{H}{2}\right)^2}\int_0^s\psi_r(s-r)^{H-1}\delta B_r^{\frac{H}{2}+\frac{1}{2}}
	\end{equation*}
is satisfied for almost every $s\in[0,T]$. Since $\nabla^\frac{H}{2}Z_s(s)$ is a Wiener integral with respect to $B^{\frac{H}{2}+\frac{1}{2}}$, its square can be computed by using the chain rule for functionals of Wiener integrals with respect to the fractional Brownian motion, see \cite[Corollary 4.4]{DunHuDun00}. In particular, it follows that the equality
	\begin{align*}
		[\nabla^{\frac{H}{2}}Z_s(s)]^2 & = \frac{8(c_H^{B,R})^2\mathrm{B}\left(\frac{H}{2},1-H\right)^2}{\Gamma\left(\frac{H}{2}\right)^4}\int_0^s\psi_u(s-u)^{H-1}\left(\int_0^u\psi_v(s-v)^{H-1}\delta B_v^{\frac{H}{2}+\frac{1}{2}}\right)\delta B_u^{\frac{H}{2}+\frac{1}{2}}\\
		&\hspace{1mm} + \frac{4H(H+1)(c_H^{B,R})^2\mathrm{B}\left(\frac{H}{2},1-H\right)^2}{\Gamma\left(\frac{H}{2}\right)^4}\int_0^s\psi_u(s-u)^{H-1}\int_0^u\psi_v(s-v)^{H-1}(u-v)^{H-1}\,\mathrm{d}{v}\,\mathrm{d}{u}
	\end{align*}
is satisfied for almost every $s\in [0,T]$. Note that, alternatively, \autoref{prop:Ito_formula} can be used to compute the square. Finally, by \autoref{prop:2der_2int}, it follows that
	\begin{equation*}
		\nabla^{\frac{H}{2},\frac{H}{2}}Z_s(s,s) = \frac{2c_H^R\mathrm{B}\left(\frac{H}{2},1-H\right)}{\Gamma\left(\frac{H}{2}\right)^2}\int_0^s\psi_r(s-r)^{2H-2}\,\mathrm{d}{r}
	\end{equation*}
is satisfied for almost every $s\in [0,T]$. The claim of the corollary is thus verified by substituting the three expressions above in \eqref{eq:not_refined_formula} and simplifying the constants (see \autoref{rem:constants} for their values).
\end{proof}

For the sake of completeness, it is noted that from \autoref{cor:Ito_for_Wiener_integral},  the It\^o formula for functionals of the Rosenblatt process itself can be obtained. This formula is given in \cite[Theorem 3]{Arr16} and proved in the white-noise setting for the case when $f$ is an infinitely differentiable function whose derivatives have at most polynomial growth. Here, it is only required that $f$  be $\mathscr{C}^3$.

\begin{corollary}
\label{prop:Ito_R}
Let $f\in\mathscr{C}^3(\mathbb{R})$ be such that its third derivative has at most polynomial growth. Then the equality
	\begin{align*}
		f(R_t^H) & = f(0) + \int_0^tf'(R_s^H)\delta R_s^H \\
		&\hspace{1cm} + H\int_0^tf''(R_s^H)s^{2H-1} \,\mathrm{d}{s} + \\
		&\hspace{2cm} + c_1(H)\int_0^tf''(R_s^H)\left(\int_0^s(s-u)^{H-1}\delta B_u^{\frac{H}{2}+\frac{1}{2}}\right)\delta B_s^{\frac{H}{2}+\frac{1}{2}}\\
		&\hspace{1cm} +\frac{H}{2}\kappa_3(R_1^H)\int_0^tf'''(R_s^H)s^{3H-1}\,\mathrm{d}{s} \\
		&\hspace{2cm} +c_2(H)\int_0^tf'''(R_s^H)\left(\int_0^s(s-u)^{H-1}\left(\int_0^u(s-v)^{H-1}\delta B_v^{\frac{H}{2}+\frac{1}{2}}\right)\delta B_u^{\frac{H}{2}+\frac{1}{2}}\right)\,\mathrm{d}{s}
	\end{align*}
is satisfied for $t\in [0,T]$ almost surely with the constants $c_1(H)$ and $c_2(H)$ given by \eqref{eq:constants_c_12(H)} and with $\kappa_3(R_1^H)$ being the third cumulant of $R_1^H$ given by
	\begin{equation*}
		\kappa_3(R_1^H) = \frac{4\sqrt{2H(2H-1)^3}}{3H-1}\mathrm{B}(H,H).
	\end{equation*}
where $\mathrm{B}$ is the Beta function.
\end{corollary}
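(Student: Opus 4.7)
The plan is to specialize \autoref{cor:Ito_for_Wiener_integral} to the constant integrand $\psi_s\equiv 1$ on $[0,T]$. The first task is to identify the resulting integral process $Z_t=\int_0^t\delta R_s^H$ with $R_t^H$ itself. Unrolling \autoref{def:Skor_int_R} with $g\equiv\bm 1_{[0,t]}$ gives
	\begin{equation*}
		\int_0^t\delta R_s^H = c_H^R\,\delta^2\!\left(I_{-,\mathrm{tr}}^{\frac{H}{2},\frac{H}{2}}(\bm 1_{[0,t]})\right) = \frac{c_H^R}{\Gamma(\tfrac{H}{2})^2}\,I_2(h_t^H) = C_H^R\,I_2(h_t^H) = R_t^H,
	\end{equation*}
where $\delta^2$ applied to a deterministic symmetric kernel coincides with the double Wiener--It\^o integral and $h_t^H$ is the kernel from \autoref{def:Rosenblatt_process}. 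With this identification, all six terms of the formula in \autoref{cor:Ito_for_Wiener_integral} with $\psi\equiv 1$ land in $R^H$.

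Next, I would simplify the two elementary deterministic integrals. For the second-derivative drift, a direct computation gives
	\begin{equation*}
		H(2H-1)\int_0^s(s-r)^{2H-2}\,\mathrm{d}{r} = H\,s^{2H-1},
	\end{equation*}
reproducing the Lebesgue integral term with $f''$. For the third-derivative drift, the change of variables $u=s\alpha$, $v=s\beta$ yields the self-similar scaling
	\begin{equation*}
		\int_0^s(s-u)^{H-1}\!\int_0^u(s-v)^{H-1}(u-v)^{H-1}\,\mathrm{d}{v}\,\mathrm{d}{u} = s^{3H-1}\int_0^1(1-\alpha)^{H-1}\!\int_0^\alpha(1-\beta)^{H-1}(\alpha-\beta)^{H-1}\,\mathrm{d}{\beta}\,\mathrm{d}{\alpha},
	\end{equation*}
so that the full triple-integral term reduces to a constant multiple of $\int_0^tf'''(R_s^H)s^{3H-1}\,\mathrm{d}{s}$. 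The two Skorokhod double integrals remain as written, so no further work is required on them.

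What is left is to match the scalar constant obtained from the scaling with $\tfrac{H}{2}\kappa_3(R_1^H)$, i.e.\ to show
	\begin{equation*}
		(\sqrt{2H(2H-1)})^3\int_0^1(1-\alpha)^{H-1}\!\int_0^\alpha(1-\beta)^{H-1}(\alpha-\beta)^{H-1}\,\mathrm{d}{\beta}\,\mathrm{d}{\alpha} \;=\; \frac{H}{2}\kappa_3(R_1^H).
	\end{equation*}
For this I would use that $R_1^H=C_H^R I_2(h_1^H)$ is a second chaos element, so $\kappa_3(R_1^H)=8\,(C_H^R)^3\langle h_1^H\otimes_1 h_1^H,h_1^H\rangle_{L^2(\mathbb{R}^2)}$; evaluating the two inner contractions by \eqref{eq:integral_beta} collapses the six-fold integral to the same double integral $\int_0^1(1-\alpha)^{H-1}\int_0^\alpha(1-\beta)^{H-1}(\alpha-\beta)^{H-1}\,\mathrm{d}{\beta}\,\mathrm{d}{\alpha}$ times powers of $\mathrm{B}(\tfrac{H}{2},1-H)$ and $\Gamma(\tfrac{H}{2})$, and the constants collected from \autoref{rem:constants} can be matched using the Beta--Gamma identities. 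An alternative and probably cleaner route is to invoke the known closed form $\kappa_3(R_1^H)=\tfrac{4\sqrt{2H(2H-1)^3}}{3H-1}\mathrm{B}(H,H)$ and identify it against the scaling constant; this is the step I expect to be the main technical obstacle, as it is the only place where the Beta-function identities have to be managed carefully rather than symbolically. Once the constants match, assembling the pieces with \autoref{rem:constants} gives exactly the stated formula.
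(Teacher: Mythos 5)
Your proposal is correct and follows essentially the same route as the paper: the paper's proof simply sets $\psi\equiv 1$ in \autoref{cor:Ito_for_Wiener_integral} and quotes the closed form of $\kappa_3(R_1^H)$ from the literature (Taqqu--Veillette), exactly the specialization you carry out. The extra details you supply (the identification $\int_0^t\delta R_s^H=R_t^H$, the evaluation $H(2H-1)\int_0^s(s-r)^{2H-2}\,\mathrm{d}r=Hs^{2H-1}$, and the self-similar scaling reducing the triple integral to $\tfrac{\mathrm{B}(H,H)}{3H-1}s^{3H-1}$, which indeed matches $\tfrac{H}{2}\kappa_3(R_1^H)$) are all correct and merely make explicit what the paper leaves implicit.
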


\begin{proof}
Set $\psi\equiv 1$ in \autoref{cor:Ito_for_Wiener_integral}. The cumulant $\kappa_3(R_1^H)$ is computed in \cite[formula (12)]{TaqVei13}.
\end{proof}

\begin{example} As an example, the square of $R^H$ is computed. The following equality is obtained:
	\begin{equation*}
		(R_t^{H})^2 = 2\int_0^tR_s^H\delta R_s^H + \frac{8(2H-1)}{H+1} \int_0^t\left(\int_0^s(s-u)^{H-1}\delta B_{u}^{\frac{H}{2}+\frac{1}{2}}\right)\delta B_{s}^{\frac{H}{2}+\frac{1}{2}} + t^{2H}.
	\end{equation*}
The formula has the same structure as given in \cite[Theorem 4]{Tud08} for the case when $R^H$ is defined by its finite time interval representation from \cite[Proposition 1]{Tud08}. See also \cite[Theorem 3.12]{Arr15} where the square is computed in the white-noise setting. It is interesting to note that the above formula can be written as
	\begin{align*}
		(R_t^H)^2 & = 2\int_0^t\left(\int_0^s\delta R_u^H\right)\delta R_s^H \\
		& \hspace{1cm} + \frac{8(2H-1)}{H+1}\int_0^t\left(\int_0^s(s-u)^{H-1}\delta B_u^{\frac{H}{2}+\frac{H}{2}}\right)\delta B_s^{\frac{H}{2}+\frac{1}{2}}\\
		& \hspace{2cm} + 2H(2H-1)\int_0^t\left(\int_0^s(s-u)^{2H-2}\,\mathrm{d}{u}\right)\,\mathrm{d}{s}.
	\end{align*}
\end{example}

\section{An application}
\label{sec:corollaries}

The It\^o-type formula from \autoref{prop:Ito_for_integral_only} can be used to compute the second moment of the stochastic integral with respect to a Rosenblatt process.

\begin{proposition}
\label{prop:L2norm_of_int}
Let $\psi$ be a stochastic process that satisfies \ref{ass:psi_1} - \ref{ass:psi_2} and $(Z_t)_{t\in[0,T]}$ be defined by \eqref{eq:Z}. Then we have that
	\begin{align*}
		\mathbb{E} \left(Z_t\right)^2 & = H(2H-1)\int_0^t\int_0^t \mathbb{E}[\psi_r\psi_s]|s-r|^{2H-2}\,\mathrm{d}{r}\,\mathrm{d}{s} \\
			& \hspace{1cm} + 2H(2H-1)c_3(H) \int_0^t\int_0^t\mathbb{E}\left[\nabla^{\frac{H}{2}}\psi_r(s) \nabla^\frac{H}{2}\psi_s(r)\right]|s-r|^{H-1}\,\mathrm{d}{r}\,\mathrm{d}{s} \\& \hspace{1cm} + \frac{1}{2}H(2H-1)c_3(H)^2\int_0^t\int_0^t\mathbb{E}\left[\nabla^{\frac{H}{2},\frac{H}{2}}\psi_r(s,s) \nabla^{\frac{H}{2},\frac{H}{2}}\psi_s(r,r)\right]\,\mathrm{d}{r}\,\mathrm{d}{s}\numberthis\label{eq:second_moment}
	\end{align*}
is satisfied with the constant $c_3(H)$ given by
	\begin{equation*}
		c_3(H)\overset{\textnormal{Def.}}{=}\frac{\Gamma\left(\frac{H}{2}\right)\Gamma\left(1-\frac{H}{2}\right)}{\Gamma\left(1-H\right)}.
	\end{equation*}
\end{proposition}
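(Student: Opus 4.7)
The plan is to apply the It\^o-type formula \eqref{eq:Ito_for_integral} from \autoref{prop:Ito_for_integral_only} to the function $f(x)=x^2$. This $f$ is in $\mathscr{C}^3(\mathbb{R})$ and has $f'''\equiv 0$, so the polynomial growth hypothesis on $f'''$ is trivially satisfied (with any $\alpha\geq 0$), and the hypotheses on $\psi$ are exactly \ref{ass:psi_1}--\ref{ass:psi_2}. Since $f'(x)=2x$, $f''(x)=2$, and $f'''(x)=0$, the It\^o formula reduces to
\begin{equation*}
    Z_t^2 = 2\int_0^t Z_s\psi_s\,\delta R_s^H + 4c_H^{B,R}\int_0^t (\nabla^{\frac{H}{2}}Z_s)(s)\psi_s\,\delta B_s^{\frac{H}{2}+\frac{1}{2}} + 2c_H^R\int_0^t (\nabla^{\frac{H}{2},\frac{H}{2}}Z_s)(s,s)\psi_s\,\mathrm{d}{s}.
\end{equation*}
Taking expectations kills the two Skorokhod integrals because $\delta$ and $\delta^2$ are defined as adjoints of $D$ and $D^2$ and thus produce centered random variables. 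What remains is
\begin{equation*}
    \mathbb{E}(Z_t)^2 = 2c_H^R \int_0^t \mathbb{E}\bigl[\psi_s\,(\nabla^{\frac{H}{2},\frac{H}{2}}Z_s)(s,s)\bigr]\,\mathrm{d}{s}.
\end{equation*}

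Next I would expand $(\nabla^{\frac{H}{2},\frac{H}{2}}Z_s)(s,s)$ using \autoref{prop:2der_2int} applied to the Skorokhod integral $Z_s=\int_0^s \psi_r\,\delta R_r^H$, which yields a sum of three terms: a Skorokhod integral of $(\nabla^{\frac{H}{2},\frac{H}{2}}\psi_r)(s,s)$ against $\delta R_r^H$, a Skorokhod integral of $(\nabla^{\frac{H}{2}}\psi_r)(s)|s-r|^{H-1}$ against $\delta B_r^{\frac{H}{2}+\frac{1}{2}}$ (with a factor of $2$ coming from symmetrization of the two identical terms at $x=y=s$), and a deterministic integral of $\psi_r|s-r|^{2H-2}\,\mathrm{d}{r}$. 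Substituting these into the previous display gives three terms whose expectations need to be evaluated.

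The two terms involving Skorokhod integrals are handled by the duality formulas of \autoref{lem:adjoint_property}: part \ref{lem:adjoint_property_R} converts the inner product $\mathbb{E}[\psi_s\int_0^s(\nabla^{\frac{H}{2},\frac{H}{2}}\psi_r)(s,s)\,\delta R_r^H]$ into $c_H^R\int_0^s\mathbb{E}[(\nabla^{\frac{H}{2},\frac{H}{2}}\psi_s)(r,r)(\nabla^{\frac{H}{2},\frac{H}{2}}\psi_r)(s,s)]\,\mathrm{d}{r}$, while part \ref{lem:adjoint_property_B} (applied with Hurst parameter $\frac{H}{2}+\frac{1}{2}$, so that $\nabla^{H-\frac{1}{2}}$ there becomes $\nabla^{\frac{H}{2}}$) converts the $\delta B^{\frac{H}{2}+\frac{1}{2}}$ term into an analogous double integral with weight $|s-r|^{H-1}$ and prefactor $c_{\frac{H}{2}+\frac{1}{2}}^B$. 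In each case the resulting integrand is symmetric in $(r,s)$, so the region $\{0\leq r\leq s\leq t\}$ can be replaced by the square $[0,t]^2$ at the cost of a factor $\tfrac{1}{2}$.

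The only remaining task is constant accounting. Using the explicit formulas from \autoref{rem:constants} one checks that
\begin{equation*}
    4(c_H^R)^2 \cdot \frac{\mathrm{B}(\tfrac{H}{2},1-H)^2}{\Gamma(\tfrac{H}{2})^4} = 2H(2H-1), \qquad 8(c_H^R)^2\cdot\frac{\mathrm{B}(\tfrac{H}{2},1-H)}{\Gamma(\tfrac{H}{2})^2} = 4H(2H-1)\,c_3(H), \qquad 2(c_H^R)^2 = H(2H-1)\,c_3(H)^2,
\end{equation*}
where the identity $c_H^{B,R}\cdot c_{\frac{H}{2}+\frac{1}{2}}^B = c_H^R$ is used in the middle equality. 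After applying the factor $\tfrac{1}{2}$ from symmetrization, these become the three prefactors in \eqref{eq:second_moment}. The only mildly delicate step is the bookkeeping of the constants and the symmetrization of the $\delta B^{\frac{H}{2}+\frac{1}{2}}$ contribution; everything else is a direct substitution into the formulas already proved.
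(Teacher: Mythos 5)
Your proposal is correct and follows essentially the same route as the paper's proof: apply \autoref{prop:Ito_for_integral_only} with $f(x)=x^2$, kill the Skorokhod integrals by taking expectations, expand $(\nabla^{\frac{H}{2},\frac{H}{2}}Z_s)(s,s)$ via \autoref{prop:2der_2int}, and convert the remaining Skorokhod integrals into double integrals with \autoref{lem:adjoint_property}. Your constant bookkeeping, including the factor $2$ from the symmetrization in \autoref{prop:2der_2int}, the identity $c_H^{B,R}\,c_{\frac{H}{2}+\frac{1}{2}}^{B}=c_H^{R}$, and the factor $\tfrac12$ from symmetrizing the domain of integration, all checks out against $c_H^R=\tfrac{1}{2}\sqrt{2H(2H-1)}\,c_3(H)$ and $\mathrm{B}(\tfrac{H}{2},1-H)/\Gamma(\tfrac{H}{2})^2=c_3(H)^{-1}$.
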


\begin{proof}
Using \autoref{prop:Ito_for_integral_only} with $f(x) = x^2$, it follows that
	\begin{equation*}
		\mathbb{E}Z_t^2 = 2c_H^R\mathbb{E}\int_0^t\left[(\nabla^{\frac{H}{2},\frac{H}{2}}Z_s)(s)\psi_s\right]\,\mathrm{d}{s}
	\end{equation*}
because the stochastic integrals have zero expectation. Using \autoref{prop:2der_2int}, it follows that
	\begin{align*}
		\mathbb{E}\left[ (\nabla^{\frac{H}{2},\frac{H}{2}}Z_s)(s)\psi_s\right] & = \mathbb{E}\left[\psi_s\left(\int_0^s(\nabla^{\frac{H}{2},\frac{H}{2}}\psi_r)(s)\delta R_r^H\right)\right] \\
		& \hspace{1cm} + \frac{4c_H^{B,R}\mathrm{B}\left(\frac{H}{2},1-H\right)}{\Gamma\left(\frac{H}{2}\right)^2} \mathbb{E}\left[\psi_s\left(\int_0^s(\nabla^\frac{H}{2}\psi_r)(s)(s-r)^{H-1}\delta B_r^{\frac{H}{2}+\frac{1}{2}}\right)\right] \\
		& \hspace{1cm} + \frac{2c_H^R\mathrm{B}\left(\frac{H}{2},1-H\right)^2}{\Gamma\left(\frac{H}{2}\right)^4} \int_0^s \mathbb{E}[ \psi_r\psi_s](s-r)^{2H-2}\,\mathrm{d}{r}.
	\end{align*}
Using \ref{lem:adjoint_property_R} of \autoref{lem:adjoint_property} yields
	\begin{equation*}
		\mathbb{E}\left[\psi_s\left(\int_0^s(\nabla^{\frac{H}{2},\frac{H}{2}}\psi_r)(s,s)\delta R_r^H\right)\right] = c_H^R\int_0^s \mathbb{E}\left[(\nabla^{\frac{H}{2},\frac{H}{2}}\psi_s)(r,r)(\nabla^{\frac{H}{2},\frac{H}{2}}\psi_r)(s,s)\right]\,\mathrm{d}{r}
	\end{equation*}
and by \ref{lem:adjoint_property_B} of the same lemma, it follows that
	\begin{equation*}
		\mathbb{E}\left[\psi_s\left(\int_0^s(\nabla^\frac{H}{2}\psi_r)(s)(s-r)^{H-1}\delta B_r^{\frac{H}{2}+\frac{1}{2}}\right)\right] = c_{\frac{H}{2}+\frac{1}{2}}^B\int_0^s \mathbb{E}\left[(\nabla^\frac{H}{2}\psi_s)(r) (\nabla^\frac{H}{2}\psi_r)(s)\right](s-r)^{H-1}\,\mathrm{d}{r}.
	\end{equation*}
is satisfied for almost every $s\in [0,t]$.
\end{proof}

\begin{remark}
A few remarks are made now.
	\begin{enumerate}[label=(\roman*)]
		\item Formula \eqref{eq:second_moment} holds under weaker assumptions: it is sufficient if the integrand $\psi$ belongs to the space $L^\frac{1}{H}(0,T;\mathbb{D}^{4,2})$. This follows because the duality formula from \autoref{lem:adjoint_property} can be used instead of the It\^o formula in which case, the assumptions \ref{ass:psi_1} - \ref{ass:psi_2} are not needed.
		\item Formula \eqref{eq:second_moment} has the same structure as the one given in \cite[Theorem 5.8]{Arr15} for the white-noise-type integral with respect to Rosenblatt processes. However, in contrast to the proof of \cite[Theorem 5.8]{Arr15}, the proof of \autoref{prop:L2norm_of_int} is relatively straightforward.
		\item If $\psi$ is deterministic, then the well-known expression for the second moment of the Wiener integral with respect to the Rosenblatt process,  e.g. \cite[p. 236]{Tud08}, is obtained:
	\begin{equation*}
		\mathbb{E} Z_t^2 = H(2H-1)\int_0^t\int_0^t\psi_r\psi_s|s-r|^{2H-2}\,\mathrm{d}{r}\,\mathrm{d}{s}.
	\end{equation*}
\end{enumerate}
\end{remark}

An estimate for higher absolute moments of the stochastic integral with respect to the Rosenblatt process is now given.

\begin{proposition}
Let $q\geq 3$ and let $\psi$ be a stochastic process that satisfies \ref{ass:psi_1} - \ref{ass:psi_2} with $\alpha=q-2$. Let $(Z_t)_{t\in[0,T]}$ be the stochastic process defined by \eqref{eq:Z}. Then the estimate
	\begin{equation*}
		\|Z_t\|_{L^q(\Omega)}^3 \leq 3(q-1)c_H^R \int_0^t \left\|\psi_s\left(|Z_s|(\nabla^{\frac{H}{2},\frac{H}{2}}Z_s)(s,s) + \mathrm{sgn}(Z_s)(q-2)[(\nabla^\frac{H}{2}Z_s)(s)]^2\right)\right\|_{L^\frac{q}{3}(\Omega)}\,\mathrm{d}{s}.
	\end{equation*}
is satisfied for every $t\in [0,T]$.
\end{proposition}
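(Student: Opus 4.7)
The plan is to apply \autoref{prop:Ito_for_integral_only} with the function $f(x)=|x|^q$. For $q>3$ this function is $\mathscr{C}^3(\mathbb{R})$ with $f'(x)=qx|x|^{q-2}$, $f''(x)=q(q-1)|x|^{q-2}$, and $f'''(x)=q(q-1)(q-2)\mathrm{sgn}(x)|x|^{q-3}$, so the polynomial growth condition \eqref{eq:poly_growth} on $f'''$ holds with exponent $q-3$, which is dominated by the exponent $q-2$ matching the hypothesis \ref{ass:psi_1}--\ref{ass:psi_2} with $\alpha=q-2$. For the borderline case $q=3$ I would first approximate $f$ by $f_\varepsilon(x)=(x^2+\varepsilon)^{q/2}\in\mathscr{C}^\infty(\mathbb{R})$, apply the formula to $f_\varepsilon$, and pass to the limit $\varepsilon\downarrow 0$ via dominated convergence. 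The resulting It\^o identity reads
\begin{equation*}
|Z_t|^q = \int_0^t f'(Z_s)\psi_s\delta R_s^H + 2c_H^{B,R}\int_0^t f''(Z_s)(\nabla^{\frac{H}{2}}Z_s)(s)\psi_s\delta B_s^{\frac{H}{2}+\frac{1}{2}} + c_H^R\int_0^t\bigl(f''(Z_s)(\nabla^{\frac{H}{2},\frac{H}{2}}Z_s)(s,s)+f'''(Z_s)[(\nabla^{\frac{H}{2}}Z_s)(s)]^2\bigr)\psi_s\,\mathrm{d}{s}.
\end{equation*}

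Next I would take expectation. Both Skorokhod integrals on the right-hand side have zero mean by the duality formula \eqref{eq:duality_formula} (applied with $P\equiv 1$), provided their integrands lie in the respective domains; this follows from \autoref{prop:boundedness_of_int_B} and \autoref{prop:boundedness_of_int_R} combined with assumption \ref{ass:psi_1} and the polynomial growth bounds \eqref{eq:estimates_on_norms_of_f} from the proof of \autoref{lem:chain_rule}. Setting $M(t)\overset{\mathrm{Def.}}{=}\mathbb{E}|Z_t|^q$ and substituting the explicit forms of $f''$ and $f'''$ yields
\begin{equation*}
M(t) = q(q-1)c_H^R\int_0^t\mathbb{E}\bigl[|Z_s|^{q-3}\psi_s\bigl(|Z_s|(\nabla^{\frac{H}{2},\frac{H}{2}}Z_s)(s,s)+(q-2)\mathrm{sgn}(Z_s)[(\nabla^{\frac{H}{2}}Z_s)(s)]^2\bigr)\bigr]\,\mathrm{d}{s}.
\end{equation*}

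H\"older's inequality applied to the inner expectation with exponents $q/(q-3)$ and $q/3$ gives
\begin{equation*}
\mathbb{E}\bigl[|Z_s|^{q-3}\psi_s(\cdots)\bigr]\leq M(s)^{(q-3)/q}\,\|\psi_s(\cdots)\|_{L^{q/3}(\Omega)},
\end{equation*}
where $(\cdots)$ abbreviates the bracketed quantity from the proposition statement. Writing $N(s)\overset{\mathrm{Def.}}{=}\|\psi_s(\cdots)\|_{L^{q/3}(\Omega)}$ and differentiating the integral identity for $M$ yields the pointwise inequality $M'(t)\leq q(q-1)c_H^R\,M(t)^{(q-3)/q}N(t)$ for almost every $t$. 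Dividing by $M(t)^{(q-3)/q}$ and recognizing the left-hand side as $(q/3)\tfrac{\mathrm{d}}{\mathrm{d}{t}}M(t)^{3/q}$ reduces the bound to $\tfrac{\mathrm{d}}{\mathrm{d}{t}}M(t)^{3/q}\leq 3(q-1)c_H^R N(t)$; integrating from $0$ to $t$ with $M(0)=0$ gives exactly the claim
\begin{equation*}
\|Z_t\|_{L^q(\Omega)}^3 = M(t)^{3/q}\leq 3(q-1)c_H^R\int_0^t N(s)\,\mathrm{d}{s}.
\end{equation*}

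The principal technical obstacles are threefold. First, the division by $M(t)^{(q-3)/q}$ is only legitimate where $M$ is strictly positive; I would circumvent this by replacing $M$ with $M_\varepsilon=M+\varepsilon$, performing the same chain-rule manipulation for $M_\varepsilon^{3/q}$, and letting $\varepsilon\downarrow 0$. Second, the non-smoothness of $|x|^3$ at $q=3$ is handled by the smooth approximation $f_\varepsilon(x)=(x^2+\varepsilon)^{q/2}$ together with the uniform polynomial growth of its derivatives, which allows passage to the limit inside the expectation. Third, verifying that the Skorokhod integrals vanish under expectation requires that $f'(Z_s)\psi_s\in L^{1/H}(0,T;\mathbb{D}^{2,2})$ and $f''(Z_s)(\nabla^{H/2}Z_s)(s)\psi_s\in L^{2/(H+1)}(0,T;\mathbb{D}^{1,2})$; both follow from the bounds used in Claims 2 and 5 of the proof of \autoref{prop:Ito_for_integral_only} together with assumption \ref{ass:psi_1}, so no new work is needed beyond the estimates already established there.
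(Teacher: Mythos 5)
Your proposal is correct and follows essentially the same route as the paper: the It\^o formula of \autoref{prop:Ito_for_integral_only} applied to $f(x)=|x|^q$ (with the smooth approximation $(x^2+\varepsilon^2)^{3/2}$ for the borderline case $q=3$), taking expectation so the Skorokhod integrals vanish, and H\"older's inequality with exponents $q/(q-3)$ and $q/3$. The only difference is that where the paper closes the argument by invoking Bihari's inequality, you prove the needed special case by hand --- differentiating $M(t)^{3/q}$ with the $M+\varepsilon$ regularization to avoid dividing by zero --- which is a valid inline derivation of the same nonlinear Gronwall-type step.
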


\begin{proof}
Initially, assume that $q>3$. Using \autoref{prop:Ito_for_integral_only} with $f(x)=|x|^q$ ($f$ is $\mathscr{C}^3$ since $q>3$), it follows that
	\begin{align*}
		|Z_t|^q & = \int_0^tq|Z_s|^{q-1}\mathrm{sgn}(Z_s)\psi_s\delta R_s^H \\
		& \hspace{10mm} + 2c_H^{B,R}\int_0^tq(q-1)|Z_s|^{q-2}\nabla^\frac{H}{2}Z_s(s)\psi_s\delta B_s^{\frac{H}{2}+\frac{1}{2}} \\
		& \hspace{10mm} + c_H^R\int_0^t\psi_s\left(q(q-1)|Z_s|^{q-2}(\nabla^{\frac{H}{2},\frac{H}{2}}Z_s)(s,s) + \right. \\
		& \hspace{4cm}  \left.\phantom{\nabla^\frac{H}{2}} + q(q-1)(q-2)|Z_s|^{q-3}\mathrm{sgn}(Z_s)[(\nabla^\frac{H}{2}Z_s)(s)]^2\right)\,\mathrm{d}{s}\numberthis\label{eq:power}
	\end{align*}
where $\mathrm{sgn}$ denotes the sign function. Taking the expectation of both sides of \eqref{eq:power}, it follows that
	\begin{equation*}
		\mathbb{E}|Z_t|^q = q(q-1)c_H^R\int_0^t\mathbb{E} \left[|Z_s|^{q-3}\psi_s\left(|Z_s|(\nabla^{\frac{H}{2},\frac{H}{2}}Z_s)(s,s) + (q-2)\mathrm{sgn}(Z_s)[(\nabla^\frac{H}{2}Z_s)(s)]^2\right)\right]\,\mathrm{d}{s}
	\end{equation*}
because the stochastic integrals have zero expectation. Thus, H\"older's inequality yields
	\begin{align*}
		\mathbb{E} |Z_t|^q & \leq q(q-1)c_H^R\int_0^t (\mathbb{E} |Z_s|^q)^\frac{q-3}{q} \cdot \\
			&\hspace{2cm} \cdot \left\|\psi_s\left(|Z_s|(\nabla^{\frac{H}{2},\frac{H}{2}}Z_s)(s,s)+(q-2)\mathrm{sgn}(Z_s)[(\nabla^\frac{H}{2}Z_s)(s)]^2\right)\right\|_{L^\frac{q}{3}(\Omega)}\,\mathrm{d}{s}.
	\end{align*}
The desired inequality is proved by using Bihari's inequality, see \cite[Theorem 3, p. 135]{BecBel61}, which gives
	\begin{equation*}
		\mathbb{E} |Z_t|^q \leq \left(\frac{3}{q}q(q-1)c_H^R\int_0^t\left\|\psi_s\left(|Z_s|(\nabla^{\frac{H}{2},\frac{H}{2}}Z_s)(s,s)+(q-2)\mathrm{sgn}(Z_s)[(\nabla^\frac{H}{2}Z_s)(s)]^2\right)\right\|_{L^\frac{q}{3}(\Omega)}\,\mathrm{d}{s}\right)^\frac{q}{3}.
	\end{equation*}
For the case $q=3$, \autoref{prop:Ito_for_integral_only} cannot be used directly, since the function $f(x)=|x|^3$ does not belong to $\mathscr{C}^3(\mathbb{R})$. Instead, for $\varepsilon>0$, consider the function
	\begin{equation*}
		f_\varepsilon(x) \overset{\textnormal{Def.}}{=} (x^2+\varepsilon^2)^\frac{3}{2}, \quad x\in\mathbb{R}.
	\end{equation*}
The function $f_\varepsilon$ is a smooth approximation of $f(x)=|x|^3$ with bounded third derivative. Hence, by \autoref{prop:Ito_for_integral_only} it follows that $\mathbb{E} f_\varepsilon(Z_t)$ satisfies the formula
	\begin{equation}
	\label{eq:approximate_eq}
		\mathbb{E} f_\varepsilon(Z_t) = \varepsilon^3 + c_H^R\int_0^t \mathbb{E}\left[\psi_s\left(f''_\varepsilon(Z_s)(\nabla^{\frac{H}{2},\frac{H}{2}}Z_s)(s,s) + f'''_\varepsilon(Z_s)[(\nabla^\frac{H}{2}Z_s)(s)]^2\right)\right]\,\mathrm{d}{s}
	\end{equation}
similarly as in the case $q>3$. Since
	\begin{equation*}
		\lim_{\varepsilon\downarrow 0}f''_\varepsilon(x)=6|x|\quad\mbox{and}\quad\lim_{\varepsilon\downarrow 0}f'''_\varepsilon(x) = 6\mathrm{sgn}(x),
	\end{equation*}
taking the limit $\varepsilon\downarrow 0$ in equality \eqref{eq:approximate_eq} and using Lebesgue's dominated convergence theorem to interchange the limit and the integrals yields
	\begin{equation*}
		\mathbb{E} |Z_t|^3 = 6c_H^R \int_0^t \mathbb{E}\left[\psi_s\left(|Z_s|(\nabla^{\frac{H}{2},\frac{H}{2}}Z_s)(s,s) + \mathrm{sgn}(Z_s)[(\nabla^\frac{H}{2}Z_s)(s)]^2\right)\right]\,\mathrm{d}{s}
	\end{equation*}
which concludes the proof.
\end{proof}

\section{A concluding remark}
Stochastic processes with second-order fractional stochastic differential arise naturally in the It\^o-type formula for functionals of (stochastic integral with respect to) Rosenblatt processes. Such formulas will in general contain stochastic integrals with respect to a fractional Brownian motion as well as derivatives up to the third order.

It seems that the method given here can be also used to obtain It\^o-type formulas for higher-order processes, e.g. Hermite processes, and it appears that such formulas will contain stochastic integrals with respect to related fractional processes of lower Hermite order as well as derivatives up to the Hermite order of the considered process increased by one.

Another topic which should be further studied is non-linear stochastic differential equations with Rosenblatt noise. The general It\^o-type formula given here should provide a convenient tool for their analysis. However, important questions of existence of the solutions need to be answered and further properties of the solutions and of fractional stochastic derivatives of the solutions need to be investigated before the formula can be applied.

\section*{Acknowledgement}
The first author is grateful to Jana \v{S}nup\'arkov\'a for her careful reading of the first version of this paper and to Martin Ondrej\'at and Bohdan Maslowski for their helpful suggestions.

This research was supported by NSF grant DMS 1411412 and AFOSR grant FA9550-17-1-0073. The first author was partially supported by the Czech Science Foundation, project GA\v{C}R 19-07140S.

\end{document}